\def\XXint#1#2#3{{\setbox0=\hbox{$#1{#2#3}{\int}$ }
\vcenter{\hbox{$#2#3$ }}\kern-.6\wd0}}
\newtheorem{theorem}{Theorem}[section]
\newtheorem{corollary}[theorem]{Corollary}
\newtheorem{proposition}[theorem]{Proposition}
\theoremstyle{definition}
\newtheorem{remark}[theorem]{Remark}
\newtheorem{definition}[theorem]{Definition}
\newtheorem{assumption}[theorem]{Assumption}
\renewcommand{\appendix}{\par
   \setcounter{section}{0}%
   \setcounter{subsection}{0}%
   \setcounter{subsubsection}{0}%
   \gdef\thesection{\@Alph\c@section}%
   \gdef\thesubsection{\@Alph\c@section.\@arabic\c@subsection}%
   \gdef\theHsection{\@Alph\c@section.}%
   \gdef\theHsubsection{\@Alph\c@section.\@arabic\c@subsection}%
   \csname appendixmore\endcsname
 }
\numberwithin{equation}{section}
\begin{document}
\title{\bf\Large Product Hardy spaces meet ball quasi-Banach function spaces
\footnotetext{\hspace{-0.35cm} 2010 {\it Mathematics Subject Classification}. Primary 42B30;
Secondary 42B25, 42B20.\endgraf
{\it Key words and phrases.} Product Hardy space, ball quasi-Banach function spaces, discrete Calder\'on's identity, Littlewood--Paley function, singular integral operators, extrapolation.\endgraf}}
\author{Jian Tan}
\date{}
\maketitle
	
\vspace{-0.8cm}

\begin{center}
\begin{minipage}{13cm}
{\small {\bf Abstract}\quad
The main purpose of this paper is to develop the theory of product Hardy spaces built on Banach lattices on $\mathbb R^n\times\mathbb R^m$.
First we introduce new product Hardy spaces ${H}_X(\mathbb R^n\times\mathbb R^m)$ associated with ball quasi-Banach function spaces $X(\mathbb R^n\times\mathbb R^m)$ via applying the Littlewood-Paley-Stein theory.
Then we establish a decomposition theorem for ${H}_X(\mathbb R^n\times\mathbb R^m)$ in terms of the discrete Calder\'on's identity. Moreover, we explore some useful and general extrapolation theorems of Rubio de Francia on $X(\mathbb R^n\times\mathbb R^m)$ and give some applications to boundedness of operators.  Finally, we conclude that the two-parameter singular integral operators $\widetilde T$ are bounded from ${H}_X(\mathbb R^n\times\mathbb R^m)$ to itself
and bounded from ${H}_X(\mathbb R^n\times\mathbb R^m)$ to $X(\mathbb R^n\times\mathbb R^m)$ via extrapolation. The main results obtained in this paper have a wide range of generality.
Especially, we can apply these results to many concrete examples of ball quasi-Banach function spaces, including product Herz spaces, weighted product Morrey spaces and product Musielak--Orlicz--type spaces.}
\end{minipage}
\end{center}

\arraycolsep=1pt

\vspace{0.2cm}

\section{Introduction }\label{intro} %and statement of main results

The pioneer work on the classical Hardy space $H^p$ was initiated by Stein and Weiss \cite{sw60} and systematically developed by Fefferman and Stein \cite{FS}. The classical Hardy spaces $H^p(\mathbb{R}^{n})$ is a suitable substitute of Lebesgue spaces $L^p(\mathbb{R}^{n})$ with $p\in(0,1]$ when studying the boundedness of operators, which is built on the Lebesgue space.
Meanwhile,  due to the need from many applications,
various variants of classical Hardy spaces have been investigated extensively, including weighted Hardy spaces, Herz-Hardy spaces, Hardy-Morrey spaces, Hardy-Orlicz spaces, Musielak-Orlicz Hardy spaces and variable Hardy spaces. 
The known Orlicz spaces, Lorentz spaces and variable Lebesgue spaces are quasi-Banach function spaces. However, Morrey spaces and weighted Lebesgue spaces are not necessarily quasi-Banach function spaces. In order to include all these function spaces in a unified framework, Sawano et al. \cite{SHYY17} introduced the ball quasi-Banach function space, which is defined similarly to quasi-Banach function spaces except that Lebesgue measurable sets are replaced by balls in $\mathbb{R}^{n}$. 
Compared with (quasi-)Banach function spaces, ball (quasi-)Banach function spaces contain more function spaces and hence are more general.
Then Sawano et al. \cite{SHYY17} introduced the Hardy type space $H_{X}\left(\mathbb{R}^{n}\right)$. For more research works on Hardy spaces associated with ball quasi-Banach function spaces, see for instance \cite{CWYZ,HuangChangYang, YHYY1, YHYY2}. 

On another hand, product Hardy spaces were first developed by Gundy and Stein \cite{GS}. 
Chang and Fefferman \cite{CF1,CF2} gave many important results on $H^p$ theory on the polydisc, BMO on product domains and the atomic decomposition of product Hardy spaces. 
Ding et.al \cite{DHLW} establish the
$(H_{w}^{p}(\mathbb R^n\times \mathbb R^m), L_{w}^{p}(\mathbb R^n\times \mathbb R^m))-$boundedness for two-parameter singular integrals by using the discrete Calder\'on's identity and weighted Littlewood-Paley-Stein theory.
Recently, there are some recent development on product Hardy spaces by Han et al. in \cite{HLPW,HLW16}.
Although the real-variable theory of Hardy spaces associated with ball quasi-Banach function spaces in one-parameter has been very fruitful,
the corresponding real-variable theory of product Hardy type spaces is still absent.
A natural question arises: can one develop the theory of the product Hardy spaces built on Banach lattices on $\mathbb R^n\times\mathbb R^m$
fulfilling some minor assumptions?  

In this paper, we answered in the affirmative by introducing the product Hardy type spaces $H_{X}\left(\mathbb{R}^{n}\times \mathbb R^m\right)$. 
To achieve our goal, we devote to establishing the real-variable theory of product Hardy spaces built on Banach lattices on $\mathbb R^n\times\mathbb R^m$
fulfilling some minor assumptions.
The key idea used in our paper is to employ the Littlewood--Paley--Stein theory
to define the appropriate product square functions and product Hardy spaces.
Also, since that the quasi-norm associated with $X(\mathbb R^n\times\mathbb R^m)$ has no explicit expression, we make full use of the minor assumptions of vector-valued maximal inequality $X(\mathbb R^n\times\mathbb R^m)$
and the boundedness the strong Hardy--Littlewood maximal operator $M_s$ on the associate space of the convexification of $X(\mathbb R^n\times\mathbb R^m)$. 
Moving into another direction, the atomic decomposition of distributions in $H^p(\mathbb R)$ established by Coifman \cite{Co}
was extended by Latter \cite{La} to the spaces $H^p(\mathbb R^n)$.
With the help of the discrete Calder\'on's identity
and the similar techniques involving variable Hardy spaces which we have used in \cite{Tan20, Tan1, T22}, we also obtain a decomposition theorem for ${H}_X(\mathbb R^n\times\mathbb R^m)$. 
Moreover, when we consider the boundedness of operators, the refined extrapolation plays an key role in our main result.
Finally, we remark that the main theorems obtained in this paper 
can be applied to many concrete product Hardy type spaces, including product Herz spaces, product Morrey spaces and product Musielak--Orlicz--type spaces. To the best of our knowledge, 
even in the special cases of product Herz spaces, weighted product Morrey spaces and product Musielak--Orlicz--type spaces,
the real-variable theory of these product Hardy type spaces and their applications to the boundedness of the two-parameter singular integral operators obtained in our paper are completely new.
It is also worth pointing out that the new approach for defining the product Hardy
type spaces in a very general framework also works for the local product Hardy type spaces, the product Hardy spaces associated with operators, flag Hardy spaces and the anisotropic product Hardy spaces
associated with ball quasi-Banach function spaces. The details will be given in our coming papers.

The organization of this article is as follows.
In Section 2, we first recall the definitions and known results of
product test functions and product singular integrals.
Then we introduce the ball quasi-Banach function spaces on product domain.
Some necessary assumptions for strong maximal operators on ball quasi-Banach function spaces were also restated in this section.
Section 3 concerns product Hardy spaces associated with ball quasi-Banach function spaces defined by the continuous and discrete Littlewood–Paley functions. Moreover, we obtain the equivalent Littlewood–Paley characterizations by applying the Plancherel–P\'olya type inequalities.
Section 4 is devoted to the decomposition for the product Hardy spaces associated with ball quasi-Banach function spaces in terms of the discrete Calder\'on identity.
In Section 5, we establish some useful and general extrapolation theorems
on product ball (quasi-)Banach function spaces.
As one of the applications, we explore a general approach to derive $H_X\rightarrow X$ boundedness
from $H_X\rightarrow H_X$ boundedness of linear operators. 
In Section 6, we establish the ${H}_X(\mathbb R^n\times\mathbb R^m)$ boundedness of the two-parameter singular integral operator $\widetilde T$ via extrapolation. Then we obtain that the two-parameter singular integral operator $\widetilde T$ are bounded from ${H}_X(\mathbb R^n\times\mathbb R^m)$ to itself and from ${H}_X(\mathbb R^n\times\mathbb R^m)$ to $X(\mathbb R^n\times\mathbb R^m)$.

\section{Preliminaries}
We first make some conventions on notation. 
The symbol $C$ denotes a positive constant which is independent of the main parameters, but it may vary from
line to line.
For any set $E$ of $X$, we use $\chi_E$ to denote its characteristic function and $E^c$ the set
$X\setminus E$. 
In this section, we recall some notation and necessary results on the theory of product singular integrals and ball Banach function spaces on product domain. 

\subsection{Product test functions and product singular integrals}
Now let us recall the class of ``bi-parameter test functions''
which was introduced in \cite{DHLW}.

\begin{definition}\label{s1de1}\quad
For a positive integer $M$,
a function $f$ defined on $\mathbb R^n\times\mathbb R^m$ is said to be a test function in $\mathcal S_M(\mathbb R^n\times\mathbb R^m)$ if $f$ satisfies\\
(i) For multi-indices $\alpha,\,\beta$ with $|\alpha|,\,|\beta|\le M-1$,
\begin{align*}
|D_x^\alpha D_y^\beta f(x,y)|\leq C \frac{1}{(1+|x|)^{n+M+|\alpha|}}\frac{1}{(1+|y|)^{n+M+|\beta|}};
\end{align*}
(ii) For $|x-x'|\leq \frac{1}{2}(1+|x|)$, $|v|=M$ and $|\beta|\le M-1$,
\begin{align*}
|D_x^vD^\beta_yf(x,y)-D_x^vD_y^\beta f(x',y)|\leq C \frac{|x-x'|}{(1+|x|)^{n+2M}}\frac{1}{(1+|y|)^{n+M+|\beta|}};
\end{align*}
(iii) For $|y-y'|\leq \frac{1}{2}(1+|y|)$, $|v|=M$ and $|\alpha|\le M-1$,
\begin{align*}
|D_x^\alpha D^v_yf(x,y)-D_x^\alpha D_y^v f(x,y')|\leq C \frac{1}{(1+|x|)^{n+M+|\alpha|}}\frac{|y-y'|}{(1+|y|)^{n+2M}};
\end{align*}
(iv) For $|x-x'|\leq \frac{1}{2}(1+|x|)$, $|y-y'|\leq \frac{1}{2}(1+|y|)$ and $|v|=M$
\begin{align*}
|D_x^vD^v_yf(x,y)-D_x^vD_y^v f(x',y)-D_x^vD^v_yf(x,y')+D_x^vD_y^v f(x',y')|\leq C \frac{|x-x'|}{(1+|x|)^{n+2M}}\frac{|y-y'|}{(1+|y|)^{n+2M}};
\end{align*}
(v) For multi-indices $\alpha,\,\beta$ with $|\alpha|,\,\beta\le M-1$,
$$
\int_{\mathbb R^n}f(x,y)x^\alpha dx=\int_{\mathbb R^m}f(x,y)y^\beta dy=0.
$$
If $f$ is a test function in $\mathcal S_M(\mathbb R^n\times\mathbb R^m)$ and the norm of $f$ in $\mathcal S_M$ is defined by
$$
\|f \| _{\mathcal S_M(\mathbb R^n\times \mathbb R^m)}=
\inf \{ C> 0: (\mbox{i})~
\mbox{--}~(\mbox{iv})~\mbox{hold}~ \}.
$$
\end{definition}
Observe that $\mathcal S_M(\mathbb R^n\times\mathbb R^m)$ is a Banach space.
We denote by $(\mathcal S_M)^\prime(\mathbb R^n\times\mathbb R^m)$ the dual space of $\mathcal S_M(\mathbb R^n)$.
We now consider the following two-parameter singular integral operators $\widetilde{T}$ on $\mathbb{R}^n \times \mathbb{R}^m$, defined by
$$
\widetilde{T}(f)\left(x_1, x_2\right)=\text { p.v. } \int_{\mathbb{R}^m \times \mathbb{R}^n} K\left(x_1-y_1, x_2-y_2\right) f\left(y_1, y_2\right) d y_1 d y_2 .
$$
where the kernel $K$ is a distribution on $\mathbb{R}^n \times \mathbb{R}^m$, which coincides with a $C^{\infty}$ function away from the coordinate subspaces $x_j=0\,(j=1,2)$ and fulfills that
\\
(1) (Differential inequalities) For each multi index $\alpha=\left(\alpha_1, \alpha_2\right)$, there is a constant $C_\alpha$ such that
$$
\left|\partial_{x_1}^{\alpha_1} \partial_{x_2}^{\alpha_2} K\left(x_1, x_2\right)\right| \leq C_\alpha\left|x_1\right|^{-n-\left|\alpha_1\right|}\left|x_2\right|^{-m-\left|\alpha_2\right|}
$$
(2) (Cancellation condition) For any normalized bump function $\varphi_1$ on $\mathbb{R}^n$ and any $R>0$, the distribution
$$
\int_{\mathbb{R}^n} K\left(x_1, x_2\right) \varphi_1\left(R x_1\right) d x_1
$$
is a one-parameter kernel on $\mathbb{R}^m$ and similarly for any normalized bump function $\varphi_2$ on $\mathbb{R}^m$ and any $R>0$, the distribution
$$
\int_{\mathbb{R}^m} K\left(x_1, x_2\right) \varphi_2\left(R x_2\right) d x_2
$$
is a one-parameter kernel on $\mathbb{R}^n$. Furthermore, for any normalized bump function $\psi$ on $\mathbb{R}^n \times \mathbb{R}^m$ and any $R>0$, we have
$$
\left|\int_{\mathbb{R}^n \times \mathbb{R}^m} K\left(x_1, x_2\right) \psi\left(R x_1, R x_2\right) d x_1 d x_2\right| \leq C,
$$
where $C$ is a constant independent of $\psi$ and $R$.
In \cite{FS1}, Fefferman and Stein established the $L_w^p$ boundedness for bi-parameter singular integrals. Then the $\left(H_w^p\left(\mathbb{R}^n \times \mathbb{R}^m\right), H_w^p\left(\mathbb{R}^n \times \mathbb{R}^m\right)\right)$ boundedness and $\left(H_w^p\left(\mathbb{R}^n \times \mathbb{R}^m\right), L_w^p\left(\mathbb{R}^{n+m}\right)\right)$ boundedness of the bi-parameter singular integrals were obtained in \cite{DHLW} under the considerably weaker assumption that $w$ is only in some product $A_q\left(\mathbb{R}^n \times \mathbb{R}^m\right)$ for some $1<q<\infty$.

\subsection{Ball quasi-Banach function spaces on product domain}

Denote by $\mathcal{M}$ the set of all measurable functions on $\mathbb{R}^{n}\times\mathbb R^m$. Now we introduce the definition of product Banach function spaces on $\mathbb{R}^{n}\times\mathbb R^m$.

\begin{definition}\label{de:01}
	\textup{A Banach space $Y(\mathbb{R}^{n}\times\mathbb R^m)\subset \mathcal{M}$ is called a Banach function space if it satisfies}
	\begin{itemize}
		\item[$\left( {\rm \romannumeral1} \right)$]
		\textup{$\left\|f\right\|_{Y(\mathbb{R}^{n}\times\mathbb R^m)}=0$ if and only if $f=0$ almost everywhere;}
		\item[$\left( {\rm \romannumeral2} \right)$]
		\textup{$\left|g\right| \le \left|f\right|$ almost everywhere implies that $\left\|g\right\|_{Y(\mathbb{R}^{n}\times\mathbb R^m)} \le \left\|f\right\|_{Y(\mathbb{R}^{n}\times\mathbb R^m)}$;}
		\item[$\left( {\rm \romannumeral3} \right)$]
		\textup{$0 \le f_{m}\uparrow f$ almost everywhere implies that $\left\|f_{m}\right\|_{Y(\mathbb{R}^{n}\times\mathbb R^m)} \uparrow \left\|f\right\|_{Y(\mathbb{R}^{n}\times\mathbb R^m)}$;}
		\item[$\left( {\rm \romannumeral4} \right)$]
		\textup{$\chi_{E} \in Y(\mathbb{R}^{n}\times\mathbb R^m)$ for any measurable set $E \subset \mathbb{R}^{n}$ with finite measure;}
		\item[$\left( {\rm \romannumeral5} \right)$]
		\textup{for any measurable set $E \subset \mathbb{R}^n\times \mathbb R^m$ with finite measure, there exists a positive constant $C_{\left(E\right)}$, depending on $E$, such that, for all $f\in Y(\mathbb{R}^{n}\times\mathbb R^m)$,}
		\begin{align}\label{eq:second1}
			\int_{E} \left|f\left(x,y\right)\right|\,dxdy \le C_{\left(E\right)} \left\|f\right\|_{Y(\mathbb{R}^{n}\times\mathbb R^m)},
		\end{align}
	\end{itemize}
	\textup{where $f$ and $g$ are measurable functions. In fact, the above conditions ensure that the norm $\left\| \cdot \right\|_{Y(\mathbb{R}^{n}\times\mathbb R^m)}$ is a product Banach function norm on $\mathbb{R}^{n}\times\mathbb R^m$.}
\end{definition}

%\begin{remark}\label{re:02}
%\textup{It was pointed out in \cite{SawanoHoYang} that we encounter certain problems in quasi-Banach spaces. These problems are mainly reflected in two aspects. The first is that, as we all know, we use function spaces to describe the quantity and the quality of functions. Banach function spaces are used to describe the quantity of functions. However, some spaces that are important in describing the quality of functions (not Banach function spaces in general, such as the Morrey space $\mathcal{M}_{q}^{p}\left(\mathbb{R}^{n}\right)$ with $1\le q<p<\infty$) violate $\left(\ref{eq:second1}\right)$. The second is that we need to handle more and more delicate quasi-Banach function spaces to prove the boundedness of operators.}\end{remark}

%Some Banach function spaces, such as the Morrey space $\mathcal{M}_{q}^{p}\left(\mathbb{R}^{n}\times \mathbb R^m\right)$ with $1\le q<p<\infty$) violate $\left(\ref{eq:second1}\right)$.
 
In order to extend quasi-Banach function spaces further so that Morrey spaces and some other related spaces are included in this generalized framework,
we introduce the product ball quasi-Banach function spaces.
For $z=(x,y)\in \mathbb{R}^{n}\times\mathbb R^m$ and $r\in \left(0,\infty\right)$, let $R\left(z,r\right) := \left\{u\in \mathbb{R}^{n}\times\mathbb R^m: \left|z-u\right|<r \right\}$, and
\begin{align}\label{eq:second2}
	\mathbb{B}:=\left\{R\left(z,r\right):z\in\mathbb{R}^{n}\times\mathbb R^m\ {\rm and}\ r\in\left(0,\infty\right)\right\}.
\end{align}
We now present the notion of product ball quasi-Banach function spaces as follows, which is from \cite{SHYY17}.

\begin{definition}\label{de:03}
	\textup{We call $X(\mathbb{R}^{n}\times\mathbb R^m)\subset \mathcal{M}$ a product ball quasi-Banach function space if it satisfies}
	\begin{itemize}
		\item[$\left( {\rm \romannumeral1} \right)$]
		\textup{$\left\|f\right\|_{X(\mathbb{R}^{n}\times\mathbb R^m)}=0$ implies that $f=0$ almost everywhere;}
		\item[$\left( {\rm \romannumeral2} \right)$]
		\textup{$\left|g\right| \le \left|f\right|$ almost everywhere implies that $\left\|g\right\|_{X(\mathbb{R}^{n}\times\mathbb R^m)} \le \left\|f\right\|_{X(\mathbb{R}^{n}\times\mathbb R^m)}$;}
		\item[$\left( {\rm \romannumeral3} \right)$]
		\textup{$0 \le f_{m}\uparrow f$ almost everywhere implies that $\left\|f_{m}\right\|_{X(\mathbb{R}^{n}\times\mathbb R^m)} \uparrow \left\|f\right\|_{X(\mathbb{R}^{n}\times\mathbb R^m)}$;}
		\item[$\left( {\rm \romannumeral4} \right)$]
		\textup{$R\in \mathbb{B}$ implies that $\chi_{R}\in X(\mathbb{R}^{n}\times\mathbb R^m)$, where $\mathbb{B}$ is as in (\ref{eq:second2}).}
	\end{itemize}
\end{definition}

A product ball quasi-Banach function space $X(\mathbb{R}^{n}\times\mathbb R^m)$ is called a product ball Banach function space if the norm of $X(\mathbb{R}^{n}\times\mathbb R^m)$ satisfies the triangle inequality: for all $f$, $g\in X(\mathbb{R}^{n}\times\mathbb R^m)$,
\begin{align}\label{eq:second3}
	\left\|f+g\right\|_{X(\mathbb{R}^{n}\times\mathbb R^m)} \le \left\|f\right\|_{X(\mathbb{R}^{n}\times\mathbb R^m)} + \left\|g\right\|_{X(\mathbb{R}^{n}\times\mathbb R^m)},
\end{align}
and, for any $R\in\mathbb{B}$, there exists a positive constant $C_{\left(B\right)}$, depending on $R$, such that, for all $f\in X(\mathbb{R}^{n}\times\mathbb R^m)$,
\begin{align}\label{eq:second4}
	\int_{R} \left|f\left(x, y\right)\right|\,dxdy \le C_{\left(R\right)} \left\|f\right\|_{X(\mathbb{R}^{n}\times\mathbb R^m)}. 
\end{align}

\begin{remark}
	\textup{Here we give some specific examples of product ball quasi-Banach function spaces as follows. In addition to $L^{p}\left(\mathbb{R}^{n}\times \mathbb R^m\right)$, there are several examples of product ball quasi-Banach function spaces.}
	\begin{itemize}
		\item[$\left( {\rm a} \right)$]
		\textup{For any $q\in\left[1,\infty\right]$, denote by $A_{q, R}\left(\mathbb{R}^{n}\times\mathbb R^m\right)$ the class of all product Muckenhoupt weights (See Section 5 for a precise definition). For any $p\in\left(0,\infty\right)$ and $w\in A_{\infty, R}\left(\mathbb{R}^{n}\times \mathbb R^m\right)$, the weighted Lebesgue space $L_{w}^{p}\left(\mathbb{R}^{n}\times\mathbb R^m\right)$ is defined by setting}
		\begin{align*}
			L_{w}^{p}\left(\mathbb{R}^{n}\times\mathbb R^m\right):=\left\{f\ {\rm is\;  measurable}:\left\|f\right\|_{L_{w}^{p}\left(\mathbb{R}^{n}\times\mathbb R^m\right)}:=\left[\int_{\mathbb{R}^{n}\times\mathbb R^m} \left|f\left(x, y\right)\right|^{p}w\left(x, y\right)\,dxdy \right]^{1/p} <\infty \right\}.
		\end{align*}	
From the definition of $A_{\infty, R}(\mathbb R^n\times\mathbb R^m)$, we find that, for any 
$R\in\mathbb B$ with $\mathbb B$ as in (2.2), $\chi_R\in L^p_w(\mathbb R^n\times\mathbb R^m)$.
Therefore, the space $L^p_w(\mathbb R^n\times\mathbb R^m)$ is a product ball quasi-Banach function space.		
			
		\item[$\left( {\rm b} \right)$]
		\textup{For the cube $Q\left({\vec 0}_{n},1\right)$ and any $i\in\mathbb{N}$, let}
		\begin{align*}
			I_{i}\left(Q\left({\vec 0}_{n},1\right)\right):=Q\left({\vec 0}_{n},2^{i+1}\right)\setminus Q\left({\vec 0}_{n},2^{i}\right).
		\end{align*}
\textup{Similarly, for the cube $Q\left({\vec 0}_{m},1\right)$ and any $j\in\mathbb{N}$, let}
		\begin{align*}
			J_{j}\left(Q\left({\vec 0}_{m},1\right)\right):=Q\left({\vec 0}_{m},2^{j+1}\right)\setminus Q\left({\vec 0}_{n},2^{i}\right).
		\end{align*}
		\textup{Here, ${\vec 0}_{n}$ and ${\vec 0}_{m}$ denote the origins of $\mathbb{R}^{n}$ and $\mathbb{R}^{m}$, respectively.	
The product Herz space  $\vec{K}_q^{\alpha, p}\left(\mathbb{R}^n \times \mathbb{R}^m\right)$ is defined in \cite[Definition 2.1]{W}.
Let $\alpha \in \mathbb{R}$ and $0<p, q \leq \infty$. The product Herz space $\vec{K}_q^{\alpha, p}\left(\mathbb{R}^n \times \mathbb{R}^m\right)$ consists of all
$$
f \in L_{\text {loc }}^q\left(\mathbb{R}^n \times \mathbb{R}^m \backslash\{(0,0)\}\right)
$$
such that $\|f\|_{\vec{K}_q^{\alpha, p}\left(\mathbb{R}^n \times \mathbb{R}^m\right)}<\infty$, where
$$
\|f\|_{\vec{K}_q^{\alpha, p}\left(\mathbb{R}^n \times \mathbb{R}^m\right)}=\left\{\sum_{i, j \in \mathbb{Z}} 2^{(i+j) p \alpha}\left\|f \chi_{I_i \times J_j}\right\|_{L^q\left(\mathbb{R}^n \times \mathbb{R}^m\right)}^p\right\}^{\frac{1}{p}} .
$$
From the definition of $\vec{K}_q^{\alpha, p}\left(\mathbb{R}^n \times \mathbb{R}^m\right)$, it follows that, for any 
$R\in\mathbb B$ with $\mathbb B$ as in (2.2), $\chi_R\in \vec{K}_q^{\alpha, p}\left(\mathbb{R}^n \times \mathbb{R}^m\right)$.
Thus, the space $\vec{K}_q^{\alpha, p}\left(\mathbb{R}^n \times \mathbb{R}^m\right)$ is a product ball quasi-Banach function space.
When $p, q\in(1, \infty)$, 
$$(\vec{K}_q^{\alpha, p}\left(\mathbb{R}^n \times \mathbb{R}^m\right))^\ast=
\vec{K}_{q'}^{-\alpha, p'}\left(\mathbb{R}^n \times \mathbb{R}^m\right)$$
with equivalent norms.
We conclude that, for any $f\in \vec{K}_q^{\alpha, p}\left(\mathbb{R}^n \times \mathbb{R}^m\right)$,
$$
\left|\int_{\mathbb R^n\times\mathbb R^m}\chi_R(x,y)f(x,y)dxdy\right|\le \|\chi_R\|_{\vec{K}_{q'}^{-\alpha, p'}\left(\mathbb{R}^n \times \mathbb{R}^m\right)}\|f\|_{\vec{K}_q^{\alpha, p}\left(\mathbb{R}^n \times \mathbb{R}^m\right)}.
$$
From \cite[Proposition 2.1]{W}, the space $\vec{K}_q^{\alpha, p}\left(\mathbb{R}^n \times \mathbb{R}^m\right)$ is a quasi-Banach space, and if $p, q \geq 1$, then $\vec{K}_q^{\alpha, p}\left(\mathbb{R}^n \times \mathbb{R}^m\right)$ is a Banach space.
Therefore, when $p, q\in (1,\infty)$, the space $\vec{K}_q^{\alpha, p}\left(\mathbb{R}^n \times \mathbb{R}^m\right)$
is a product ball Banach function space.}  
		\item[$\left( {\rm c} \right)$]
		\textup{Let $0<p\le \infty$, $u:\mathbb{R}^{n}\times\mathbb R^m\rightarrow (0,\infty)$, and $w:\mathbb R^n\times\mathbb R^m\rightarrow (0,\infty)$ be a weight function. Then the weighted product Morrey space $\mathcal{M}_{u,p}^{w}\left(\mathbb{R}^{n}\times\mathbb R^m\right)$ is defined to be the set of all $f\in L_{{\rm loc}}^{q}\left(\mathbb{R}^{n}\times \mathbb R^m\right)$ such that}
		\begin{align*}
			\left\|f\right\|_{\mathcal{M}_{u,p}^{w}\left(\mathbb{R}^{n}\times\mathbb R^m\right)}:=\sup\limits_{R\subset\mathbb{R}^{n}\times\mathbb R^m} \frac{1}{u(R)}
\left[\int_{R} \left|f\left(x,y\right)\right|^{p}w(x,y)dxdy \right]^{\frac{1}{p}} <\infty,
		\end{align*}
		\textup{where the supremum is taken over all $R\subset \mathbb{R}^{n}\times \mathbb R^m$.}
When $u=1$, the space $\mathcal{M}_{u,p}^{w}\left(\mathbb{R}^{n}\times\mathbb R^m\right)$ reduces to the
classical weighted product Lebesgue space, while $\mathcal{M}_{u,p}^{w}\left(\mathbb{R}^{n}\times\mathbb R^m\right)$ is a product Morrey space for $w=1, u=|R|^{\frac{1}{p}-\frac{1}{q}}$ for $1<p\le q<\infty.$		
When $u\in \mathbb W_w^p$ (For precise definitions, see \cite[Theorem 3.1]{Ho17} or \cite[Definition 2.4]{W1}), then for any 
$R\in\mathbb B$ with $\mathbb B$ as in (2.2) we know that $\chi_B\in \mathcal{M}_{u,p}^{w}\left(\mathbb{R}^{n}\times\mathbb R^m\right)$. 
Thus, the space $\mathcal{M}_{u,p}^{w}\left(\mathbb{R}^{n}\times\mathbb R^m\right)$ is a product ball quasi-Banach function space.
Let $1<p<\infty$ and $u$ satisfies necessary conditions, the weighted product Morrey space and its
pre-dual spaces are Banach spaces. Similarly, we find that in this case the space $\mathcal{M}_{u,p}^{w}\left(\mathbb{R}^{n}\times\mathbb R^m\right)$ is also a product ball Banach function space.
For more details on weight product Morrey space, see \cite{Ho17,W1}.
     \item[$\left( {\rm d} \right)$]
     For a product growth function $\varphi$, similar to the one-parameter case
     in \cite{HH,YLK}, the product Musielak--Orlicz--type space $L^\varphi(\mathbb R^n\times \mathbb R^m)$ is defined to be the set of all measurable functions $f$ such that, for some $\lambda\in(0,\infty)$, $$
\int_{\mathbb R^n\times\mathbb R^m} \varphi(z,|f(z)|/\lambda)dz<\infty
     $$
     with the Luxembourg--Nakano (quasi-)norm
     $$
\|f\|_{L^\varphi(\mathbb R^m\times\mathbb R^m)}:=
\inf\left\{\lambda>0:\;\int_{\mathbb R^n\times\mathbb R^m} \varphi(z,|f(z)|/\lambda)dz\le 1\right\}.     
     $$
  For more precise definitions we refer the readers to \cite[Section 2]{FHLY}.
  When $\varphi$ satisfies the mild assumptions, from \cite[Lemma 3.6]{FHLY}, we see that 
 for any 
$R\in\mathbb B$ with $\mathbb B$ as in (2.2), $\chi_R\in L^\varphi(\mathbb R^m\times\mathbb R^m)$. 
Thus, the space $L^\varphi(\mathbb R^m\times\mathbb R^m)$ is a product ball quasi-Banach function space.
	\end{itemize}
\textup{}	
\end{remark}

\subsection{The strong Hardy-Littlewood maximal operator}

Denote by $L_{{\rm loc}}^{1}\left(\mathbb{R}^{n}\times \mathbb R^m \right)$ the set of all locally integrable functions on $\mathbb{R}^{n}\times \mathbb R^m$. 
In the product domain, it is natural to replace the classical Hardy-Littlewood maximal operator $M$ by the strong maximal function $M_s$, which is defined by setting, for all $f\in L_{{\rm loc}}^{1}\left(\mathbb{R}^{n}\times \mathbb R^m\right)$ and $x\in \mathbb{R}^{n}\times \mathbb R^m$,
\begin{align}\label{eq:second7}
	\mathcal M_sf\left(x\right):= \sup\limits_{x\in R} \frac{1}{|R|} \int_{R} \left|f\left(y\right)\right|\,dy,
\end{align}
where the supremum of $R$ are taken over all dyadic rectangles of the form $R=I\times J$ and where $I,\,J$ are cubes in $\mathbb R^n$ and $\mathbb R^m$,
respectively.

For any $\theta\in \left(0,\infty\right)$, the powered strong Hardy-Littlewood maximal operator $\mathcal M_s^{\left(\theta\right)}$ is defined by setting, for all $f\in L_{{\rm loc}}^{1}\left(\mathbb{R}^{n}\times \mathbb R^m\right)$ and $x\in \mathbb{R}^{n}\times \mathbb R^m$,
\begin{align}\label{eq:second8}
	\mathcal M_s^{\left(\theta\right)}\left(f\right)\left(x\right) := \left\{\mathcal M_s\left(\left|f\right|^{\theta}\right)\left(x\right) \right\}^{1/\theta}.
\end{align}

In order to prove several theorems in this paper, we need the following two assumptions and several lemmas. 

\begin{assumption}\label{as:01}
	\textup{Let $X$ be ball quasi-Banach function space.
		For some $\theta,s\in\left(0,1\right]$ and $\theta<s$, there exists a positive constant $C$ such that, for any  $\left\{f_{j}\right\}_{j=1}^{\infty} \subset L_{{\rm loc}}^{1}\left(\mathbb{R}^{n}\times \mathbb R^m\right)$,}
	\begin{align}\label{e2.6}
		\left\|\left\{\sum_{j=1}^{\infty}\left[\mathcal M_s^{(\theta)}\left(f_{j}\right)\right]^{s}\right\}^{1 / s}\right\|_{X} \le C\left\|\left\{\sum_{j=1}^{\infty}\left|f_{j}\right|^{s}\right\}^{1 / s}\right\|_{X}
	\end{align}
and
	\begin{align}\label{e2.7}
		\left\|\left\{\sum_{j=1}^{\infty}\left[\mathcal M_s^{(\theta)}\left(f_{j}\right)\right]^{s}\right\}^{1 / s}\right\|_{X^{\frac{s}{2}}} \le C\left\|\left\{\sum_{j=1}^{\infty}\left|f_{j}\right|^{s}\right\}^{1 / s}\right\|_{X^{\frac{s}{2}}}.
	\end{align}
\end{assumption}

\begin{remark}\label{re:07}
First, when $X:=L^{p}\left(\mathbb{R}^{n}\times\mathbb R^m\right)$, $p\in \left(1,\infty\right)$, $\theta=1$ and $s\in\left(1,\infty\right]$, $(\ref{e2.6})$ is called the well-known
Fefferman--Stein vector-valued strong maximal inequality for. 
Similar to the one-parameter case in \cite[Theorem 1]{FS71}, we also know that (\ref{e2.6}) also holds true when $\theta$, $s\in \left(0,1\right]$, $\theta<s$, $X:=L^{p}\left(\mathbb{R}^{n}\times\mathbb R^m\right)$ and $p\in\left(\theta,\infty\right)$. Since $\big(L^p(\mathbb R^n\times\mathbb R^m)\big)^{s/2}= L^{ps/2}(\mathbb R^n\times\mathbb R^m)$ for any $p, s\in(0,\infty)$, we conclude that  (\ref{e2.7}) also holds true when $\theta,s\in(0,1]$, $\theta<s$ and $X:=L^p(\mathbb R^n\times\mathbb R^m)$ with $p\in(2\theta/s, \infty).$ 

Second, let $X:=L_w^{p}\left(\mathbb{R}^{n}\times\mathbb R^m\right)$ with $p\in \left(0,\infty\right)$
and $w\in A_{\infty, R}(\mathbb{R}^{n}\times\mathbb R^m)$. 
Similar to \cite[Remark 2.4(b)]{WangYangYang}, (\ref{e2.6}) holds true when $\theta$, $s\in \left(0,1\right]$, $\theta<s$, $X:=L_w^{p}\left(\mathbb{R}^{n}\times\mathbb R^m\right)$, $p\in\left(\theta,\infty\right)$
and $w\in A_{p/\theta}(\mathbb R^n\times\mathbb R^m)$. Since $\big(L_w^p(\mathbb R^n\times\mathbb R^m)\big)^{s/2}= L_w^{ps/2}(\mathbb R^n\times\mathbb R^m)$ for any $p, s\in(0,\infty)$, we conclude that  (\ref{e2.7}) also holds true when $\theta,s\in(0,1]$, $\theta<s$, $X:=L^p(\mathbb R^n\times\mathbb R^m)$ with $p\in(2\theta/s, \infty)$ and $w\in A_{p/\theta}(\mathbb R^n\times\mathbb R^m)$. 

Third, let $X:=\vec{K}_q^{\alpha, p}\left(\mathbb{R}^n \times \mathbb{R}^m\right)$.
When $0<p<\infty$, $1<q<\infty$ and $\max\{-n/q, -m/q\}<\alpha<\min\{n(1-1/q), m(1-1/q)\}$,
by \cite[Theorem 4.1]{W} the strong maximal operator $\mathcal M_s$ 
is bounded on $X$. From the definition of $\vec{K}_q^{\alpha, p}\left(\mathbb{R}^n \times \mathbb{R}^m\right)$,
we find that 
$$\left\|f^{1/\theta}\right\|^\theta_{\vec{K}_q^{\alpha, p}\left(\mathbb{R}^n \times \mathbb{R}^m\right)}
=\left\|f\right\|_{\vec{K}_{q/\theta}^{\alpha \theta, p/\theta}\left(\mathbb{R}^n \times \mathbb{R}^m\right)}.$$
Then (\ref{e2.6}) holds true for $X:=\vec{K}_q^{\alpha, p}\left(\mathbb{R}^n \times \mathbb{R}^m\right)$ when $\theta$, $s\in \left(0,1\right]$, $\theta<\min\{s, q\}$,
and $$\max\{-n/q, -m/q\}<\alpha\,\theta<\min\{n(1-1/q), m(1-1/q)\}.$$
Similarly, we can see that (\ref{e2.7}) also holds true for $X$
when $\theta$, $s\in \left(0,1\right]$, $\theta<\min\{s, sq/2\}$, and $p\in\left(\theta,\infty\right)$
and $\max\{-n/q, -m/q\}<\frac{2\alpha \theta}{s} <\min\{n(1-1/q), m(1-1/q)\}$.

Moreover, for $X:=\mathcal{M}_{u,p}^{w}\left(\mathbb{R}^{n}\times\mathbb R^m\right)$,
when $1<q_0<p,$ $w\in A_{p/{q_0}, R}(\mathbb R^n\times\mathbb R^m)$
and $u^{q_0}\in \mathbb W_w^{p/q_0}$, from \cite[Theorem 5.3]{W1} we know that
the Fefferman--Stein vector-valued maximal inequalities for $X$ hold true.
From the definition of $\mathcal{M}_{u,p}^{w}\left(\mathbb{R}^{n}\times\mathbb R^m\right)$,
we find that 
$$\left\|f^{1/\theta}\right\|^\theta_{\mathcal{M}_{u,p}^{w}\left(\mathbb{R}^{n}\times\mathbb R^m\right)}
=\left\|f\right\|_{\mathcal{M}_{u^\theta,p/\theta}^{w}\left(\mathbb{R}^{n}\times\mathbb R^m\right)}.$$
Then (\ref{e2.6}) holds true for $X:=\mathcal{M}_{u,p}^{w}\left(\mathbb{R}^{n}\times\mathbb R^m\right)$ when $\theta$, $s\in \left(0,1\right]$, $\theta<s$, $1<q_0<p/\theta<\infty$
and $u^{\theta q_0}\in \mathbb W_w^{p/{q_0}}$. Also, we can similarly show that (\ref{e2.7}) also holds true for $X$.
Last but not least, we can similarly check that $(\ref{e2.6})$ and  $(\ref{e2.7})$ also hold true $X=L^\varphi(\mathbb R^m\times\mathbb R^m)$. We leave the details to the readers.
\end{remark}

\begin{assumption}\label{as:02}
	\textup{Assume that $X$ is a ball quasi-Banach function space, there exists $s\in\left(0,1\right]$ such that $X^{1/s}$ is also a ball Banach function space, and there exist $q\in\left(1,\infty\right]$ and $C\in\left(0,\infty\right)$ such that, for any $f\in\left(X^{1/s}\right)^{\prime}$,}
	\begin{align}\label{eq:newton2}
		\left\|\mathcal M_s^{\left((q / s)^{\prime}\right)}(f)\right\|_{\left(X^{1 / s}\right)^{\prime}} \le C\|f\|_{\left(X^{1 / s}\right)^{\prime}}.
	\end{align}
\end{assumption}

\begin{remark}
It is easy to check that $(\ref{eq:newton2})$ is
equivalent to that 
there exists a constant $C>0$ such that, for any
$f\in[(X^{1/s})']^{1/{(q/s)'}}$,
\begin{align}\label{eq2.8}
		\left\|\mathcal M_s(f)\right\|_{[(X^{1/s})']^{1/{(q/s)'}}} 
		\le C\|f\|_{[(X^{1/s})']^{1/{(q/s)'}}}.
	\end{align}
First, when $X=L^p(\mathbb R^n\times\mathbb R^m)$ for $0<p<\infty$, then for any $0<s<\min\{1,p\}$ and $q\in (\max\{1, p, \infty\})$, we have
$$[(X^{1/s})']^{1/{(q/s)'}}(\mathbb R^n\times\mathbb R^m)=L^{{(p/s)'}/{(q/s)'}}(\mathbb R^n\times\mathbb R^m).$$
Thus by the $L^r(\mathbb R^n\times\mathbb R^m)-$boundedness of $\mathcal M_s$, we conclude that in this case
$(\ref{eq2.8})$ holds true.

Second, when $X=L_w^p(\mathbb R^n\times\mathbb R^m)$ for $0<p<\infty$ and $w\in A_{\infty, R}(\mathbb R^n\times\mathbb R^m)$, then for any $0<s<\min\{1,p\}$, $w\in A_{p/s}(\mathbb R^n\times\mathbb R^m)$ and $q\in (\max\{1, p, \infty\})$
large enough such that $w^{1-(p/s)'}\in A_{(p/s)'/{(q/s)'},R}(\mathbb R^n\times
\mathbb R^m)$ 
, we have
$$[(X^{1/s})']^{1/{(q/s)'}}(\mathbb R^n\times\mathbb R^m)=L_{w^{1-(p/s)'}}^{{(p/s)'}/{(q/s)'}}(\mathbb R^n\times\mathbb R^m).$$
Thus by the weighted Lebesuge boundedness of $\mathcal M_s$, we conclude that in this weighted product case
$(\ref{eq2.8})$ holds true.

Furthermore, when $X$ is the product Herz space $\vec{K}_q^{\alpha, p}\left(\mathbb{R}^n \times \mathbb{R}^m\right)$ with
$0<p<\infty$, $0<q<\infty$ and $\max\{-n/q, -m/q\}<\alpha<\infty$.
Then for any $0<s<\min\{p,q\}$, $\max\{1,q\}<r<\infty$ with
$q/s>1,$ $p/s>1$ and $(q/s)'/(r/s)'>1$, then by duality we conclude that
$$[(X^{1/s})']^{1/{(r/s)'}}(\mathbb R^n\times\mathbb R^m)=\vec{K}_{(q/s)'/(r/s)'}^{-\alpha s(r/s)', (p/s)'/(r/s)'}\left(\mathbb{R}^n \times \mathbb{R}^m\right).$$
Hence, by the fact that $\mathcal M_s$ is bounded on $\vec{K}_q^{\alpha, p}\left(\mathbb{R}^n \times \mathbb{R}^m\right)$ for $1<q<\infty$, we find that in this product Herz case
$(\ref{eq2.8})$ holds true.
Similarly, due to the
fact that the strong maximal operator $\mathcal M_s$ is bounded on
the weighted product Morrey space $\mathcal{M}_{u,p}^{w}\left(\mathbb{R}^{n}\times\mathbb R^m\right)$ (\cite{Ho17, W1}) and its dual spaces as well as
on the he product Musielak--Orlicz--type space $L^\varphi(\mathbb R^n\times \mathbb R^m)$ (\cite{FHLY}),
under the mild assumptions we know that in these cases 
$(\ref{eq2.8})$ still holds true.
\textup{Therefore, we conclude that these spaces satisfy both Assumption \ref{as:01} and Assumption \ref{as:02}.}
\end{remark}

\section{Product Hardy space associated with ball quasi-Banach function spaces}
Let $\psi^{(1)}$ be a Schwartz function on $\mathbb R^n$ which
satisfies
$$
\int_{\mathbb R^n}\psi^{(1)}(x)x^\alpha dx=0,
$$
for all multi-indices $\alpha$,
and for all $\xi\neq 0$
$$
\sum_{k\in\mathbb Z}|\widehat{\psi^{(i)}}(2^{-k}\xi)|^2=1.
$$
Also, $\psi^{(2)}$ is a Schwartz function on $\mathbb R^m$ which satisfies the similar conditions.
For $f\in({\mathcal{S}}_M)'(\mathbb R^n\times\mathbb R^m)$,
define the Littlewood-Paley square function of $f$ by
$$
\mathcal G(f)(x):=\bigg\{\sum_{j\in\mathbb Z}\sum_{k\in\mathbb Z}|\psi_{j,k}\ast f(x,y)|^2\bigg\}^{1/2},
$$
where $\psi_{j,k}=\psi_j^{(1)}\otimes\psi_k^{(2)}$, 
$\psi_j(\cdot)=2^{kn}\psi(2^{j}\cdot)$ and $\psi_k(\cdot)=2^{km}\psi(2^{k}\cdot)$.
Now we define the product Hardy spaces $H_X(\mathbb R^n\times\mathbb R^m)$.
\begin{definition}\label{s3d1}
Let $X(\mathbb R^n\times \mathbb R^m)$ be a ball quasi-Banach function space.
The product Hardy spaces $H_X(\mathbb R^n\times \mathbb R^m)$ are the collection of all $f\in({\mathcal{S}}_M)'(\mathbb R^n\times\mathbb R^m)$
for which the quantity
$$\|f\|_{{H}_X(\mathbb R^n\times\mathbb R^m)}=\|\mathcal G(f)\|_{X(\mathbb R^{n+m})}<\infty.$$
\end{definition}

In order to prove that the definition of the new product Hardy spaces is independent of any particular choice of
$\psi_{j,k}$ and give the equivalent characterization in terms of the discrete Littlewood-Paley square function,
the new Plancherel-P\'olya type inequalities play a crucial role.
To prove it,
we recall the following discrete Calder\'on reproducing formula in \cite[Theorem 3.2]{DHLW}.

\begin{proposition}\label{DCRF}  Suppose that $\varphi_{j, k}$ are the same as above. Then for any $M \geq 1$, we can choose a large $N$ depending on $M$ and $\varphi$ such that the following discrete Calderón's identity
$$
f(x, y)=\sum_{j, k, J, I}|I||J| \tilde{\varphi}_{j, k}\left(x, y, x_I, y_J\right) \varphi_{j, k} * f\left(x_I, y_J\right)
$$
holds in $\mathcal{S}_M\left(\mathbb{R}^n \times \mathbb{R}^m\right)$ and in the dual space $\left(\mathcal{S}_M\right)^{\prime}\left(\mathbb{R}^n \times \mathbb{R}^m\right)$, where $\tilde{\varphi}_{j, k}(x, y$, $\left.x_I, y_J\right) \in \mathcal{S}_M\left(\mathbb{R}^n \times \mathbb{R}^m\right),$ $I, J$ are dyadic cubes with side-length $l(I)=2^{-j-N}$ and $l(J)=$ $2^{-k-N}$ and $x_I, y_J$ are any fixed points in $I, J$ respectively.
\end{proposition}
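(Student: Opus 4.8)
The plan is to derive the discrete identity from the continuous Calderón reproducing formula by a discretization-plus-Neumann-series argument of Frazier--Jawerth type, carried out in the bi-parameter setting. First I would record the continuous reproducing formula: since $\varphi^{(1)}$ and $\varphi^{(2)}$ satisfy $\sum_{k}|\widehat{\varphi^{(i)}}(2^{-k}\xi)|^2=1$ for $\xi\ne0$ and have all moments vanishing, taking tensor products and passing to the Fourier side yields
\begin{align*}
f=\sum_{j,k\in\mathbb Z}\varphi_{j,k}*\varphi_{j,k}*f,
\end{align*}
with convergence in $\mathcal S_M(\mathbb R^n\times\mathbb R^m)$ and in $(\mathcal S_M)'(\mathbb R^n\times\mathbb R^m)$; this is where the Calderón condition and the cancellation hypothesis are used.

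Next I would discretize the inner convolution. Writing $\varphi_{j,k}*f$ as an integral and partitioning $\mathbb R^n\times\mathbb R^m$ into dyadic rectangles $I\times J$ with side lengths $2^{-j-N}$ and $2^{-k-N}$, I replace each integral over $I\times J$ by its value frozen at the sample point $(x_I,y_J)$:
\begin{align*}
\varphi_{j,k}*\varphi_{j,k}*f(x,y)
=\sum_{I,J}|I|\,|J|\,\varphi_{j,k}(x-x_I,y-y_J)\,\varphi_{j,k}*f(x_I,y_J)+E_{j,k}f(x,y).
\end{align*}
Summing over $j,k$ gives $f=T_Nf+R_Nf$, where $T_Nf$ is exactly the candidate discrete sum and $R_N=\sum_{j,k}E_{j,k}$ is a remainder operator. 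The scheme succeeds provided $R_N$ is bounded on $\mathcal S_M$ (and on $(\mathcal S_M)'$) with operator norm $\le C\,2^{-N\delta}$ for some $\delta>0$; then for $N$ large $\|R_N\|<1$, so $T_N=\mathrm{Id}-R_N$ is invertible via the Neumann series $T_N^{-1}=\sum_{\ell\ge0}R_N^{\ell}$, and writing $f=T_N^{-1}(T_Nf)$ and moving the continuous operator $T_N^{-1}$ inside the sum so that it acts on the shifted atoms produces
\begin{align*}
f=\sum_{j,k,I,J}|I|\,|J|\,\tilde\varphi_{j,k}(x,y,x_I,y_J)\,\varphi_{j,k}*f(x_I,y_J),\qquad
\tilde\varphi_{j,k}(\cdot,\cdot,x_I,y_J):=T_N^{-1}\big[\varphi_{j,k}(\cdot-x_I,\cdot-y_J)\big],
\end{align*}
which is the asserted formula once I check that $\tilde\varphi_{j,k}(\cdot,\cdot,x_I,y_J)\in\mathcal S_M(\mathbb R^n\times\mathbb R^m)$ with seminorms controlled uniformly in $I,J$.

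The main obstacle is the remainder estimate $\|R_N\|\le C\,2^{-N\delta}$, and this is where the bulk of the work lies. The mechanism is \emph{bi-parameter almost orthogonality}: thanks to the vanishing moments of $\varphi^{(1)},\varphi^{(2)}$ and their Schwartz decay, the composed kernels obey the double decay estimate
\begin{align*}
\big|\varphi_{j,k}*\varphi_{j',k'}(x,y)\big|
\le C\,2^{-|j-j'|\delta}\,2^{-|k-k'|\delta}\,
\frac{2^{(j\wedge j')n}}{(1+2^{j\wedge j'}|x|)^{\,n+\delta}}\,
\frac{2^{(k\wedge k')m}}{(1+2^{k\wedge k'}|y|)^{\,m+\delta}},
\end{align*}
while the discretization at mesh $2^{-j-N}\times2^{-k-N}$ contributes an extra factor $2^{-N}$ from a first-order mean-value estimate using the smoothness and difference regularity encoded in Definition \ref{s1de1}(ii)--(iv). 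Combining these bounds, summing the geometric series in $|j-j'|$ and $|k-k'|$, and controlling the frozen spatial sums by the reproducing kernels yields the decay in $N$. The delicate points are that both parameters must be handled simultaneously (the operator $R_N$ does not factor as a tensor product), that all estimates must be uniform over the sampling points $(x_I,y_J)$, and that one must track the $\mathcal S_M$-seminorms: the mixed smoothness and double cancellation conditions (ii)--(v) of Definition \ref{s1de1} are precisely what keep each $\tilde\varphi_{j,k}$ inside $\mathcal S_M$ and make $T_N^{-1}$ act continuously there.

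Finally, I would confirm that every manipulation is legitimate in the stated topologies: the continuous formula and the Neumann series converge in $\mathcal S_M(\mathbb R^n\times\mathbb R^m)$ and in $(\mathcal S_M)'(\mathbb R^n\times\mathbb R^m)$, the interchange of $T_N^{-1}$ with the sum over $(j,k,I,J)$ is justified by this convergence, and hence the discrete identity holds both in $\mathcal S_M(\mathbb R^n\times\mathbb R^m)$ and in its dual, as claimed.
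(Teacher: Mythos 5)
Your proposal is correct and follows essentially the same route as the paper's source: the paper does not prove Proposition \ref{DCRF} itself but quotes it from \cite[Theorem 3.2]{DHLW}, and your scheme --- continuous Calder\'on formula, discretization at mesh $2^{-j-N}\times 2^{-k-N}$, a remainder bound $\|R_N\|\lesssim 2^{-N}$ via bi-parameter almost orthogonality, Neumann-series inversion $T_N^{-1}=\sum_{\ell\ge 0}R_N^{\ell}$, and setting $\tilde\varphi_{j,k}(\cdot,\cdot,x_I,y_J):=T_N^{-1}\bigl[\varphi_{j,k}(\cdot-x_I,\cdot-y_J)\bigr]$ --- is precisely the argument of that reference. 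The same mechanism (remainder operator $\mathcal R$, smallness $\|\mathcal R\|\le C2^{-N}$, and $T_N^{-1}=\sum_{i=0}^{\infty}\mathcal R^{i}$) is replayed in the paper's own proof of Theorem \ref{DTCI}, so your outline matches the paper's methods as well.
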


Now applying the discrete Calder\'on reproducing formula provides the following Plancherel-P\'olya type inequalities. 

\begin{theorem}\label{Planc} Suppose $\psi_{j, k}, \varphi_{j, k}$ satisfy the same conditions as above. 
Assume that $X(\mathbb R^n\times\mathbb R^m)$ is a ball quasi-Banach function space satisfying Assumption \ref{as:01} with some
$0<\theta<s\le1$ and $M\gg 1$. Then for $f \in\left(\mathcal{S}_M\right)^{\prime}\left(\mathbb{R}^n \times \mathbb{R}^m\right)$,
$$
\begin{aligned}
&\left\|\left\{\sum_{j, k} \sum_{I, J} \sup _{u \in I, v \in J}\left|\psi_{j, k}\ast f(u, v)\right|^2 \chi_I \chi_J\right\}^{\frac{1}{2}}\right\|_{X\left(\mathbb{R}^n \times \mathbb{R}^m\right)} \\
&\quad \sim\left\|\left\{\sum_{j, k} \sum_{I, J} \inf _{u \in I, v \in J}\left|\varphi_{j, k}\ast f(u, v)\right|^2 \chi_I \chi_J\right\}^{\frac{1}{2}}\right\|_{X(\mathbb{R}^n \times \mathbb{R}^m)},
\end{aligned}
$$
where $I, J$ are dyadic cubes with side-length $l(I)=2^{-j-N}$ and $l(J)=$ $2^{-k-N}$
for some fixed $N$.
\end{theorem}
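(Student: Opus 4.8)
The plan is to prove the two inequalities implicit in the symbol $\sim$ separately; by the complete symmetry of the hypotheses on $\psi$ and $\varphi$ it suffices to bound a supremum expression for one family by an infimum expression for the other, i.e. to establish an estimate of the form $\|(\sup,a)\|_X\lesssim\|(\inf,b)\|_X$ for any two admissible families $a,b$. The direction $\lesssim$ in the statement is this with $a=\psi$, $b=\varphi$; the reverse direction follows by taking $a=\varphi$, $b=\psi$ and invoking the trivial pointwise bound $\inf\le\sup$. So I focus on the one-sided estimate.

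First I would insert the discrete Calderón identity of Proposition \ref{DCRF} into $\psi_{j,k}\ast f(u,v)$, writing it as $\sum_{j',k'}\sum_{I',J'}|I'||J'|\,\psi_{j,k}\ast\tilde\varphi_{j',k'}(u,v,x_{I'},y_{J'})\,\varphi_{j',k'}\ast f(x_{I'},y_{J'})$, and choose each $(x_{I'},y_{J'})$ to realize, up to a harmless factor, $\inf_{I'\times J'}|\varphi_{j',k'}\ast f|$. The technical heart is the product almost-orthogonality estimate for the kernel $\psi_{j,k}\ast\tilde\varphi_{j',k'}$: since $\psi$ has all vanishing moments and $\tilde\varphi_{j',k'}\in\mathcal S_M$, the kernel factors into a product of two one-parameter almost-orthogonality kernels, each carrying a geometric gain $2^{-|j-j'|L}$, respectively $2^{-|k-k'|L}$, together with spatial decay of order $n+L$ in $x$ and $m+L$ in $y$ at the finer of the two scales; taking $M\gg1$ lets $L$ be as large as needed.

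Next, for $(x,y)\in I\times J$ I would sum the kernel against $\{\inf_{I'\times J'}|\varphi_{j',k'}\ast f|\}$ over the grid. Splitting into dyadic annuli and using $l(I')=2^{-j'-N}$, $l(J')=2^{-k'-N}$, the grid sum is dominated by $[\mathcal M_s^{(\theta)}g_{j',k'}](x,y)$, where $g_{j',k'}:=\sum_{I',J'}\inf_{I'\times J'}|\varphi_{j',k'}\ast f|\,\chi_{I'}\chi_{J'}$, at the cost of a residual factor $2^{-|j-j'|L'}2^{-|k-k'|L'}$; this requires $L$ to exceed $n(1/\theta-1)$ in the first variable and $m(1/\theta-1)$ in the second, which is exactly where $\theta<s\le1$ and $M\gg1$ enter. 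Summing in $\ell^2$ over $(j,k)$ by Young's inequality for the summable kernel $2^{-|j-j'|L'}2^{-|k-k'|L'}$ yields the pointwise bound $\{\sum_{j,k}\sup_{I\times J}|\psi_{j,k}\ast f|^2\chi_I\chi_J\}^{1/2}\lesssim\{\sum_{j',k'}[\mathcal M_s^{(\theta)}g_{j',k'}]^2\}^{1/2}$.

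Finally I would take the $X$-quasi-norm and invoke a vector-valued maximal inequality. The subtlety is that the square function forces an $\ell^2$ sum, whereas Assumption \ref{as:01} supplies only $\ell^s$ inequalities with $s\le1$; the bridge is to pass to the convexification. For $F\ge0$ one has $\|F^{1/2}\|_X=\|F\|_{X^{1/2}}^{1/2}$, and \eqref{e2.7} is equivalent, after raising to the power $s$, to $\|\sum_j[\mathcal M_s^{(\theta)}f_j]^s\|_{X^{1/2}}\lesssim\|\sum_j|f_j|^s\|_{X^{1/2}}$. Applying this with $f_j=g_j^{2/s}$ and using the power-monotonicity of the powered maximal operator through the identity $[\mathcal M_s^{(\theta)}(g_j^{2/s})]^s=(\mathcal M_s^{(2\theta/s)}g_j)^2\ge(\mathcal M_s^{(\theta)}g_j)^2$ (valid since $s\le2$) produces the $\ell^2$ Fefferman--Stein inequality $\|\{\sum_{j,k}[\mathcal M_s^{(\theta)}g_{j,k}]^2\}^{1/2}\|_X\lesssim\|\{\sum_{j,k}g_{j,k}^2\}^{1/2}\|_X$. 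Since $\{\sum_{j',k'}g_{j',k'}^2\}^{1/2}=\{\sum_{j',k'}\inf_{I'\times J'}|\varphi_{j',k'}\ast f|^2\chi_{I'}\chi_{J'}\}^{1/2}$, this closes the estimate. I expect the product almost-orthogonality bookkeeping in the two-parameter geometry and the clean extraction of the $\ell^2$ maximal inequality from the $\ell^s$ hypothesis via convexification to be the two main obstacles.
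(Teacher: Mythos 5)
Your proposal is correct and takes essentially the same route as the paper's proof: expand $\psi_{j,k}\ast f$ via the discrete Calder\'on identity of Proposition \ref{DCRF}, invoke the product almost-orthogonality estimates of \cite{DHLW}, dominate the grid sums by the powered strong maximal operator with a summable geometric residue, and then pass through the convexification $X^{s/2}$ to apply the vector-valued inequality \eqref{e2.7} of Assumption \ref{as:01}, with the reverse bound following by symmetry. The only cosmetic difference is that you dominate with $\mathcal M_s^{(\theta)}$ and upgrade to the exponent $2\theta/s$ by power monotonicity, whereas the paper works with $r=2\theta/s$ from the start so that the convexification identity is an equality rather than an inequality.
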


\begin{proof}
By Proposition \ref{DCRF}, for any $f \in\left(\mathcal{S}_M\right)^{\prime}$ 
we conclude that
$$
f(x, y)=\sum_{j^{\prime}} \sum_{k^{\prime}} \sum_{J^{\prime}} \sum_{I^{\prime}}\left|J^{\prime} \| I^{\prime}\right| \tilde{\varphi}_{j^{\prime}, k^{\prime}}\left(x, y, x_{I^{\prime}}, y_{J^{\prime}}\right)\left(\varphi_{j^{\prime}, k^{\prime}} * f\right)\left(x_{I^{\prime}}, y_{J^{\prime}}\right) 
$$
and that
$$
\left(\psi_{j, k} * f\right)(u, v)=\sum_{J^{\prime}} \sum_{I^{\prime}}\left|J^{\prime} \| I^{\prime}\right|\left(\psi_{j, k} * \tilde{\varphi}_{j^{\prime}, k^{\prime}}\left(\cdot, \cdot, x_{I^{\prime}}, y_{J^{\prime}}\right)\right)(u, v)\left(\varphi_{j^{\prime}, k^{\prime}} * f\right)\left(x_{I^{\prime}}, y_{J^{\prime}}\right) .
$$
Since $\tilde{\varphi} \in \mathcal{S}_M$, by following the same argument 
in \cite[pp. 48]{DHLW}, 
for any $u, u^\ast, x_{I^{\prime}} \in I, v, v^\ast, y_{J^{\prime}} \in J$ we have
$$
\begin{array}{rl}
&|\psi_{j, k}\ast f(u, v)|\\
\leq & C 
\sum_{k^{\prime}, j^{\prime}} \sum_{I^{\prime}, J^{\prime}} 2^{-\left|j-j^{\prime}\right| M} 2^{-\left|k-k^{\prime}\right| M}\left|I^{\prime}\right|\left|J^{\prime}\right| \\
& \times \frac{2^{-\left(j \wedge j^{\prime}\right) M}}{\left(2^{-\left(j \wedge j^{\prime}\right)}+\left|u-x_{I^{\prime}}\right|\right)^{n+M}} \cdot \frac{2^{-\left(k \wedge k^{\prime}\right) M}}{\left(2^{-\left(k \wedge k^{\prime}\right)}+\left|v-y_{J^{\prime}}\right|\right)^{n+M}}\left|\varphi_{j^{\prime}, k^{\prime}} * f\left(x_{I^{\prime}}, y_{J^{\prime}}\right)\right| \\
\leq & C_{r, N, m, n} \sum_{k^{\prime}, j^{\prime}} 2^{-\left|j-j^{\mid}\right| M_1} 2^{-\left|k-k^{\prime}\right| M_2}\left\{\mathcal{M}_s\left(\sum_{J^{\prime}, I^{\prime}}\left|\varphi_{j^{\prime}, k^{\prime}} * f\left(x_{I^{\prime}}, y_{J^{\prime}}\right)\right| \chi_{J^{\prime}} \chi_{I^{\prime}}\right)^r\left(u^*, v^*\right)\right\}^{\frac{1}{r}} \\
\leq & C\left\{\sum_{k^{\prime}, j^{\prime}} 2^{-\left|j-j^{\prime}\right| M_1 2^{-\left|k-k^{\prime}\right| M_2}}\right\}^{1 / 2} \\
&\quad \times\left\{\sum_{k^{\prime}, j^{\prime}} 2^{-\left|j-j^{\prime}\right| M_1} 2^{-\left|k-k^{\prime}\right| M_2}\left\{\mathcal{M}_s\left(\sum_{J^{\prime}, I^{\prime}}\left|\varphi_{j^{\prime}, k^{\prime}} * f\left(x_{I^{\prime}}, y_{J^{\prime}}\right)\right| \chi_{J^{\prime}} \chi_{I^{\prime}}\right)^r\left(u^*, v^*\right)\right\}^{\frac{2}{r}}\right\}^{1 / 2} \\
\leq & C\left\{\sum_{k^{\prime}, j^{\prime}} 2^{-\left|j-j^{\prime}\right| M_1} 2^{-\left|k-k^{\prime}\right| M_2}\left\{\mathcal{M}_s\left(\sum_{J^{\prime}, I^{\prime}}\left|\varphi_{j^{\prime}, k^{\prime}} * f\left(x_{I^{\prime}}, y_{J^{\prime}}\right)\right| \chi_{J^{\prime}} \chi_{I^{\prime}}\right)^r\left(u^*, v^*\right)\right\}^{\frac{2}{r}}\right\}^{1 / 2}
\end{array}
$$
where $M_1=M-(1 / r-1) n$ and $M_2=M-(1 / r-1) m$ are positive constants. 
Fix $M\gg 1$. 
Denote $r:=\frac{2\theta}{s}$, where $\theta$ and $s$ are as in Assumptions \ref{as:01}.
Thus, for any $r$ fulfilling $\max \left\{\frac{n}{n+M}, \frac{m}{m+M}\right\}<r<1$, we get that
$$
\begin{aligned}
&\left\{\sum_{j, k} \sum_{I, J} \sup _{u \in I, v \in J}\left|\psi_{j, k} * f(u, v)\right|^2 \chi_I(x) \chi_J(y)\right\}^{\frac{1}{2}} \\
&\quad \leq C\left\{\sum_{j^{\prime}, k^{\prime}}\left\{\mathcal{M}_s\left(\sum_{I^{\prime}, J^{\prime}} \inf _{u^{\prime} \in I^{\prime}, v^{\prime} \in J^{\prime}}\left|\varphi_{j^{\prime}, k^{\prime}} * f\left(u^{\prime}, v^{\prime}\right)\right| \chi_{J^{\prime}} \chi_{I^{\prime}}\right)^r(x, y)\right\}^{2 / r}\right\}^{\frac{1}{2}},
\end{aligned}
$$
where $I \subset \mathbb{R}^n, J \in \mathbb{R}^m$ are dyadic cubes with side-length $2^{-j-N}$ and $2^{-k-N}$ respectively for the above $N$. 
From this and Assumption \ref{as:01} of $\mathcal{M}_s$, 
\begin{align*}
&\left\|\left\{\sum_{j^{\prime}, k^{\prime}}\left\{\mathcal{M}_s\left(\sum_{I^{\prime}, J^{\prime}} \inf _{u^{\prime} \in I^{\prime}, v^{\prime} \in J^{\prime}}\left|\varphi_{j^{\prime}, k^{\prime}} * f\left(u^{\prime}, v^{\prime}\right)\right| \chi_{J^{\prime}} \chi_{I^{\prime}}\right)^r\right\}^{\frac{2}{r}}\right\}^{\frac{1}{2}}
\right\|_{X(\mathbb R^n\times\mathbb R^m)}\\
&= C
\left\|\left\{\sum_{j^{\prime}, k^{\prime}}\left\{\mathcal{M}^\theta_s\left(\sum_{I^{\prime}, J^{\prime}} \inf _{u^{\prime} \in I^{\prime}, v^{\prime} \in J^{\prime}}\left|\varphi_{j^{\prime}, k^{\prime}} * f\left(u^{\prime}, v^{\prime}\right)\right| \chi_{J^{\prime}} \chi_{I^{\prime}}\right)^\frac{2}{s}\right\}^{s}\right\}^{\frac{1}{s}}
\right\|^{\frac{s}{2}}_{X^{\frac{s}{2}}(\mathbb R^n\times\mathbb R^m)}\\
&\le C
\left\|\left\{\sum_{j^{\prime}, k^{\prime}}\left(\sum_{I^{\prime}, J^{\prime}} \inf _{u^{\prime} \in I^{\prime}, v^{\prime} \in J^{\prime}}\left|\varphi_{j^{\prime}, k^{\prime}} * f\left(u^{\prime}, v^{\prime}\right)\right| \chi_{J^{\prime}} \chi_{I^{\prime}}\right)^{2}\right\}^{\frac{1}{s}}
\right\|^{\frac{s}{2}}_{X^{\frac{s}{2}}(\mathbb R^n\times\mathbb R^m)}\\
&\leq C\left\|\left\{\sum_{j^{\prime}, k^{\prime}} \sum_{I^{\prime}, J^{\prime}} \inf _{u^{\prime} \in I^{\prime}, v^{\prime} \in J^{\prime}}\left|\varphi_{j^{\prime}, k^{\prime}}\ast f(u^{\prime}, v^{\prime})\right|^2 \chi_{I^{\prime}} \chi_{J^{\prime}}\right\}^{\frac{1}{2}}\right\|_{X(\mathbb{R}^n \times \mathbb{R}^m)}.
\end{align*}
The proof of the inverse inequality is identical.
Thus, we completes the proof of Theorem \ref{Planc}.
\end{proof}

As products, we find that the new product Hardy spaces is well defined and give the following discrete Littlewood--Paley--Stein characterizations for these new spaces. 
Assume that $\phi^{(1)}\in C_c^\infty(\mathbb R^n)$
satisfying
$$
\int_{\mathbb R^n}\phi^{(1)}(x)x^\alpha dx=0,
$$
for all multi-indices $\alpha$ fulfilling $0\le |\alpha|\le M_0$,
where $M_0>0$ is a sufficiently large integer
and for all $\xi_1\neq 0$
$$
\sum_{k\in\mathbb Z}|\widehat{\phi^{(i)}}(2^{-k}\xi_1)|^2=1.
$$
Moreover, $\phi^{(2)}$ is a Schwartz function on $\mathbb R^m$ which satisfies the similar conditions. Similarly, define the Littlewood-Paley square function associated with $\phi$ of $f$ by
$$
\mathcal G_\phi(f)(x):=\bigg\{\sum_{j\in\mathbb Z}\sum_{k\in\mathbb Z}|\phi_{j,k}\ast f(x,y)|^2\bigg\}^{1/2}.
$$

\begin{corollary}
Assume that $X(\mathbb R^n\times\mathbb R^m)$ is a ball quasi-Banach function space satisfying Assumption \ref{as:01} with some
$0<\theta<s\le1$ and $M\gg 1$. Let $f\in({\mathcal{S}}_M)'(\mathbb R^n\times\mathbb R^m)$.
Then $f\in H_X(\mathbb R^n\times\mathbb R^m)$
if and only if $\mathcal G^d(f)\in X(\mathbb R^n\times\mathbb R^m)$.
Furthermore,
$$\|f\|_{{H}_X(\mathbb R^n\times\mathbb R^m)}\sim
\|\mathcal G_\phi(f)\|_{X(\mathbb R^{n+m})}\sim
\|\mathcal G^d(f)\|_{X(\mathbb R^{n+m})}\sim
\|\mathcal G_\phi^d(f)\|_{X(\mathbb R^{n+m})}<\infty,$$
where
$$
\mathcal G^d(f)(x):=\bigg\{\sum_{j\in\mathbb Z}\sum_{k\in\mathbb Z}
|\psi_{j,k}\ast f(x_I,y_I)|^2\chi_I(x)\chi_J(y)\bigg\}^{1/2}
$$
and
$$
\mathcal G_\phi^d(f)(x):=\bigg\{\sum_{j\in\mathbb Z}\sum_{k\in\mathbb Z}
|\phi_{j,k}\ast f(x_I,y_I)|^2\chi_I(x)\chi_J(y)\bigg\}^{1/2}.
$$
\end{corollary}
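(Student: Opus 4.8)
The plan is to deduce everything from the Plancherel--Pólya inequality of Theorem \ref{Planc}, so that the Corollary becomes a soft consequence requiring no new hard estimate. For an admissible family $\Phi\in\{\psi,\phi\}$ I would first introduce the two auxiliary discrete square functions built from the same dyadic grid as in Theorem \ref{Planc},
\begin{gather*}
S^\Phi_{\sup}(f):=\left\{\sum_{j,k}\sum_{I,J}\sup_{u\in I,\,v\in J}|\Phi_{j,k}\ast f(u,v)|^2\chi_I\chi_J\right\}^{1/2},\\
S^\Phi_{\inf}(f):=\left\{\sum_{j,k}\sum_{I,J}\inf_{u\in I,\,v\in J}|\Phi_{j,k}\ast f(u,v)|^2\chi_I\chi_J\right\}^{1/2},
\end{gather*}
where $I,J$ run over the dyadic cubes of side-lengths $2^{-j-N}$, $2^{-k-N}$. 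Since $\phi^{(1)},\phi^{(2)}$ are $C_c^\infty$ and, for $M_0$ large relative to the fixed $M$, have vanishing moments up to a sufficiently high order, the family $\phi$ is as smooth and rapidly decaying as required and hence satisfies the same hypotheses as $\psi,\varphi$; thus Theorem \ref{Planc} applies verbatim to any pair chosen from $\{\psi,\phi\}$.

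Next I would establish the pointwise sandwich. For a fixed scale $(j,k)$ the dyadic rectangles $I\times J$ of side-lengths $2^{-j-N},2^{-k-N}$ tile $\mathbb R^n\times\mathbb R^m$, so $\sum_{I,J}\chi_I(x)\chi_J(y)=1$ for every $(x,y)$. Consequently, for each $\Phi\in\{\psi,\phi\}$ and every $(x,y)$,
\[
S^\Phi_{\inf}(f)(x,y)\le \mathcal G_\Phi(f)(x,y)\le S^\Phi_{\sup}(f)(x,y),
\]
because on the rectangle containing $(x,y)$ the value $|\Phi_{j,k}\ast f(x,y)|$ lies between the corresponding infimum and supremum; the identical chain holds with $\mathcal G_\Phi(f)$ replaced by the discrete function $\mathcal G_\Phi^d(f)$, the latter being evaluated at the fixed points $x_I\in I$, $y_J\in J$, which again sit between the infimum and supremum over that rectangle. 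Applying the monotonicity axiom (ii) of Definition \ref{de:03} to these pointwise bounds places all four norms $\|\mathcal G(f)\|_X,\ \|\mathcal G^d(f)\|_X$ (for $\Phi=\psi$) and $\|\mathcal G_\phi(f)\|_X,\ \|\mathcal G_\phi^d(f)\|_X$ (for $\Phi=\phi$) between $\|S^\Phi_{\inf}(f)\|_X$ and $\|S^\Phi_{\sup}(f)\|_X$.

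It then remains to observe that the two extremes are comparable. Taking $\varphi=\psi$ in Theorem \ref{Planc} gives $\|S^\psi_{\sup}(f)\|_X\sim\|S^\psi_{\inf}(f)\|_X$, and likewise $\|S^\phi_{\sup}(f)\|_X\sim\|S^\phi_{\inf}(f)\|_X$; this already yields $\|\mathcal G(f)\|_X\sim\|\mathcal G^d(f)\|_X$ and $\|\mathcal G_\phi(f)\|_X\sim\|\mathcal G_\phi^d(f)\|_X$, and in particular the membership equivalence $f\in H_X\iff\mathcal G^d(f)\in X$. For the cross-comparison between the two kernels I would apply Theorem \ref{Planc} with the pair $(\psi,\phi)$, which gives $\|S^\psi_{\sup}(f)\|_X\sim\|S^\phi_{\inf}(f)\|_X$; chaining,
\[
\|\mathcal G(f)\|_X\sim\|S^\psi_{\sup}(f)\|_X\sim\|S^\phi_{\inf}(f)\|_X\sim\|\mathcal G_\phi(f)\|_X,
\]
and the remaining discrete quantities slot into the same chain. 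This simultaneously proves that $\|\mathcal G(f)\|_X$ is independent of the admissible family defining it, so that $H_X$ is well defined.

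Finally, the main difficulty has in fact already been absorbed into Theorem \ref{Planc}: all the genuine analysis --- the almost-orthogonality estimate between $\psi_{j,k}$ and $\tilde\varphi_{j',k'}$, the passage to the strong maximal function $\mathcal M_s$, and the use of Assumption \ref{as:01} --- is carried out there. The only points that require care here are verifying that $\phi$ qualifies as an admissible family (which is controlled by the size of $M_0$) and being precise about the tiling identity $\sum_{I,J}\chi_I\chi_J\equiv1$ at each fixed scale; beyond that the argument is purely formal, relying only on monotonicity of the quasi-norm and the equivalences supplied by Theorem \ref{Planc}.
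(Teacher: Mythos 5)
Your proposal is correct and is essentially the argument the paper intends: the paper states this corollary as an immediate consequence of Theorem \ref{Planc}, and your pointwise sandwich $S^\Phi_{\inf}(f)\le\mathcal G_\Phi(f),\,\mathcal G_\Phi^d(f)\le S^\Phi_{\sup}(f)$ combined with the quasi-norm monotonicity and the three applications of Theorem \ref{Planc} (to the pairs $(\psi,\psi)$, $(\phi,\phi)$, $(\psi,\phi)$) is exactly the standard deduction being invoked, including the observation that $\phi$ qualifies as an admissible family once $M_0$ is large relative to $M$.
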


\begin{remark}
The Hardy type spaces obtained in this section have wide generality and applications. Here we present some concrete examples of ball quasi-Banach function spaces $X(\mathbb R^n\times \mathbb R^m)$ and the corresponding Hardy type spaces $H_X(\mathbb R^n\times \mathbb R^m)$.
When the product ball quasi-Banach function spaces $X$ is the
classical Lebesgue space $L^{p}\left(\mathbb{R}^{n}\times\mathbb R^m\right)$, the weighted Lebesgue space associated with product Muckenhoupt weights $L_w^{p}\left(\mathbb{R}^{n}\times\mathbb R^m\right)$, the product Herz space $\vec{K}_q^{\alpha, p}\left(\mathbb{R}^n \times \mathbb{R}^m\right)$, the weighted product Morrey space $\mathcal{M}_{u,p}^{w}\left(\mathbb{R}^{n}\times\mathbb R^m\right)$ and the product Musielak--Orlicz--type space $L^\varphi(\mathbb R^m\times\mathbb R^m)$, respectively, then $H_X(\mathbb R^n\times \mathbb R^m)$
is the classical product Hardy space $H^{p}\left(\mathbb{R}^{n}\times\mathbb R^m\right)$, the weighted product Hardy space associated with product Muckenhoupt weights $H_w^{p}\left(\mathbb{R}^{n}\times\mathbb R^m\right)$, the product Herz--Hardy space  $H\vec{K}_q^{\alpha, p}\left(\mathbb{R}^n \times \mathbb{R}^m\right)$, the weighted product Hardy--Morrey space $H\mathcal{M}_{u,p}^{w}\left(\mathbb{R}^{n}\times\mathbb R^m\right)$ and the product Musielak--Orlicz--Hardy--type space $H^\varphi(\mathbb R^n\times\mathbb R^m)$. 
Besides these spaces, we also deem that our methods for defining product Hardy type spaces also suitable for
grand product Hardy spaces,
grand product Hardy-Morrey spaces.
\end{remark}

When we consider the boundedness of operators, the ``so-called'' density argument is very useful. Thus, we obtain the product test function space $\mathcal S_M(\mathbb R^n\times\mathbb R^m)$ is dense in the new product Hardy space with the  additional assumption of absolute continuity of the quasi-norm. We point out that, except the weighted product Morrey space, the other examples of ball quasi-Banach function spaces in our paper all have absolutely continuous quasi-norms.

\begin{definition}
\textup{Let $X(\mathbb R^n\times\mathbb R^m)$ be a ball product quasi-Banach function space. A function $f\in X(\mathbb R^n\times\mathbb R^m)$ is said to have an absolutely continuous quasi-norm in $X(\mathbb R^n\times\mathbb R^m)$ if
$\|f\chi_{E_j}\|_{X(\mathbb R^n\times\mathbb R^m)}\downarrow 0$
whenever $\{E_j\}_{j=1}^{\infty}$ is a sequence of measurable sets that satisfy
$E_{j+1}\subset E_j$ for any $j\in\mathbb N$ and $\bigcap_{j=1}^\infty E_j=\emptyset$.
Moreover, $X(\mathbb R^n\times\mathbb R^m)$ is said to have an absolutely
continuous quas-norm if, for any $f\in X(\mathbb R^n\times\mathbb R^m)$, $f$ has an absolutely continuous
quasi-norm in $X(\mathbb R^n\times\mathbb R^m)$.}
\end{definition}

\begin{theorem}\label{dense}
Let $X(\mathbb R^n\times\mathbb R^m)$ is a ball quasi-Banach function space satisfying Assumption \ref{as:01} with some
$0<\theta<s\le1$ and $M\gg 1$. Assume further that $X(\mathbb R^n\times\mathbb R^m)$ has an absolutely
continuous quasi-norm. Then the product test function space $\mathcal S_M(\mathbb R^n\times\mathbb R^m)$ is dense ${H}_X(\mathbb R^n\times\mathbb R^m)$.
\end{theorem}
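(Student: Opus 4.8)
The plan is to produce, for any given $f\in H_X(\mathbb{R}^n\times\mathbb{R}^m)$, an explicit sequence of test functions in $\mathcal{S}_M(\mathbb{R}^n\times\mathbb{R}^m)$ converging to $f$ in the $H_X$ quasi-norm, by truncating the discrete Calder\'on reproducing formula of Proposition~\ref{DCRF}. First I would fix $f\in H_X(\mathbb{R}^n\times\mathbb{R}^m)$ and write, via Proposition~\ref{DCRF},
$$
f=\sum_{j,k}\sum_{I,J}|I||J|\,\tilde{\varphi}_{j,k}(\cdot,\cdot,x_I,y_J)\,\varphi_{j,k}*f(x_I,y_J),
$$
with convergence in $(\mathcal{S}_M)'(\mathbb{R}^n\times\mathbb{R}^m)$. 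For a parameter $N\in\mathbb{N}$, let $\Lambda_N$ be the finite index set of those $(j,k,I,J)$ with $|j|,|k|\le N$ and with $I,J$ contained in the cube of side length $2^N$ centred at the origin, and let $f_N$ be the corresponding partial sum. Since each $\tilde{\varphi}_{j,k}(\cdot,\cdot,x_I,y_J)\in\mathcal{S}_M(\mathbb{R}^n\times\mathbb{R}^m)$ and $\mathcal{S}_M$ is a vector space, every $f_N$ is a finite linear combination of test functions, so $f_N\in\mathcal{S}_M(\mathbb{R}^n\times\mathbb{R}^m)$.

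Next I would estimate the error $g_N:=f-f_N\in(\mathcal{S}_M)'$, whose Calder\'on expansion consists exactly of the tail terms indexed by $\Lambda_N^c$. Applying $\psi_{j,k}*$ to this tail expansion and repeating, verbatim, the almost-orthogonality estimate used in the proof of Theorem~\ref{Planc} (only the range of the inner summation has shrunk from the full index set to $\Lambda_N^c$), together with Assumption~\ref{as:01} for $\mathcal{M}_s$, should yield
$$
\|g_N\|_{H_X(\mathbb{R}^n\times\mathbb{R}^m)}=\|\mathcal{G}(g_N)\|_{X}\le C\left\|\left\{\sum_{(j,k,I,J)\in\Lambda_N^c}|\varphi_{j,k}*f(x_I,y_J)|^2\,\chi_I\chi_J\right\}^{1/2}\right\|_{X}=:C\,\|S_N\|_X.
$$
Here $S_N$ is the discrete $\varphi$-square function of $f$ with the sum restricted to the tail indices, so that $0\le S_N\le S$ with $S:=\mathcal{G}_\varphi^d(f)$, and by the Corollary following Theorem~\ref{Planc} we have $\|S\|_X\sim\|f\|_{H_X}<\infty$, whence $S\in X(\mathbb{R}^n\times\mathbb{R}^m)$.

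It then remains to show $\|S_N\|_X\to0$ as $N\to\infty$, and this is exactly where the absolute continuity of the quasi-norm enters. Since $\Lambda_N$ exhausts the full index set, the nonnegative functions $S_N$ decrease pointwise almost everywhere to $0$ while being dominated by $S\in X(\mathbb{R}^n\times\mathbb{R}^m)$. The absolute continuity of the quasi-norm of $X$ then forces $\|S_N\|_X\downarrow0$, whence $\|f-f_N\|_{H_X}\to0$ and the density follows. The main obstacle I expect is precisely this last step: the hypothesis is phrased in terms of the quasi-norm vanishing on a decreasing sequence of sets with empty intersection, so I must first convert the decay $S_N\downarrow0$ of the tail square function into that framework, that is, show that absolute continuity in the stated sense implies the order-continuity property $0\le h_N\downarrow0,\ h_N\le h\in X\Rightarrow\|h_N\|_X\to0$. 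A secondary technical point is to verify that the almost-orthogonality bound inherited from Theorem~\ref{Planc} survives the restriction of the summation range to $\Lambda_N^c$ \emph{uniformly} in $N$, so that the constant $C$ above does not degenerate; since that bound is proved term by term through the geometric factors $2^{-|j-j'|M_1}2^{-|k-k'|M_2}$ and $\mathcal{M}_s$, restricting to a subset of indices can only decrease the sums, so uniformity should hold.
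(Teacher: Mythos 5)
Your proposal is correct and follows essentially the same route as the paper: truncate the discrete Calder\'on identity of Proposition~\ref{DCRF} to a finite index set (the paper's set $W$ with parameters $L,r$ playing the role of your $\Lambda_N$), observe the partial sum lies in $\mathcal S_M(\mathbb R^n\times\mathbb R^m)$, bound the tail's $H_X$ quasi-norm by the tail discrete square function in $X$ via the Theorem~\ref{Planc} argument, and invoke absolute continuity of the quasi-norm to send it to zero. In fact you make explicit two points the paper's proof glosses over — the uniformity in $N$ of the almost-orthogonality estimate and the passage from the set-theoretic definition of absolute continuity to order continuity for the dominated decreasing sequence $S_N\downarrow 0$ — both of which can be completed as you sketch.
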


\begin{proof}
Set $W=\{(j, k, I, J):|j| \leq L,|k| \leq L, I \times$ $J \subset B(0, r)\}$, where $I, J$ are dyadic cubes in $\mathbb{R}^n, \mathbb{R}^m$ with side length $2^{-j-N}, 2^{-k-N}$, respectively. 
If $f \in H_X\left(\mathbb{R}^n \times \mathbb{R}^m\right)$, then by Proposition \ref{DCRF} we conclude that
$$
f=\sum_{j, k, I, J}|I||J| \tilde{\psi}_{j, k}\left(x, y, x_I, y_J\right) \psi_{j, k} * f\left(x_I, y_J\right) \qquad\mbox{in}\quad \left(\mathcal{S}_M\right)^{\prime}\left(\mathbb{R}^n \times \mathbb{R}^m\right).
$$
By following the similar argument of Theorem \ref{Planc}, for any $f \in H_X\left(\mathbb{R}^n \times \mathbb{R}^m\right)$, it yields that
$$
\begin{aligned}
&\left\| f -\sum_{(j, k, I, J) \in W}|I||J| \tilde{\psi}_{j, k}\left(\cdot, \cdot, x_I, y_J\right) \psi_{j, k} * f\left(x_I, y_J\right) \right\|_{H_X\left(\mathbb{R}^n \times \mathbb{R}^m\right)} \\
&=\left\|\sum_{(j, k, I, J) \in W^c}|I||J| \tilde{\psi}_{j, k}\left(\cdot, \cdot, x_I, y_J\right) \psi_{j, k} * f\left(x_I, y_J\right)\right\|_{H_X\left(\mathbb{R}^n \times \mathbb{R}^m\right)} \\
& \leq C\left\|\left\{\sum_{(j, k, I, J) \in W^c}\left|\psi_{j, k} * f\left(x_I, y_J\right)\right|^2 \chi_I \chi_J\right\}\right\|_{X\left(\mathbb{R}^{n+m}\right)}\rightarrow 0,
\end{aligned}
$$
as $L, r$ tend to infinity, where the last inequality comes from the fact that $X$ has an absolutely
continuous quasi-norm. Due to the following term
$$
\sum_{(j, k, I, J) \in W}|I||J| \tilde{\psi}_{j, k}\left(x, y, x_I, y_J\right) \psi_{j, k} * f\left(x_I, y_J\right)
$$  belongs to $\mathcal{S}_M\left(\mathbb{R}^n \times \mathbb{R}^m\right)$
and then we have completed the proof of Theorem \ref{dense}.
\end{proof}

\begin{corollary}
Let $X(\mathbb R^n\times\mathbb R^m)$ is a ball quasi-Banach function space satisfying Assumption \ref{as:01} with some
$0<\theta<s\le1$ and $M\gg 1$. Assume further that $X$ has an absolutely
continuous quasi-norm. Then ${H}_X(\mathbb R^n\times\mathbb R^m)\cap L^2(\mathbb R^{n+m})$ is dense ${H}_X(\mathbb R^n\times\mathbb R^m)$.
\end{corollary}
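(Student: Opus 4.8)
The plan is to reduce the statement to Theorem~\ref{dense}, whose proof already produces, for each $f\in H_X(\mathbb R^n\times\mathbb R^m)$, an explicit sequence of test functions converging to $f$ in the $H_X$ norm. It therefore suffices to check that this approximating sequence in fact lies in $H_X(\mathbb R^n\times\mathbb R^m)\cap L^2(\mathbb R^{n+m})$; concretely, I only need the inclusion $\mathcal S_M(\mathbb R^n\times\mathbb R^m)\subset L^2(\mathbb R^{n+m})$ together with the membership of the truncated sums in $H_X$.

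First I would verify that $\mathcal S_M(\mathbb R^n\times\mathbb R^m)\subset L^2(\mathbb R^{n+m})$. Taking $\alpha=\beta=0$ in the size condition (i) of Definition~\ref{s1de1}, any $g\in\mathcal S_M$ satisfies $|g(x,y)|\le C(1+|x|)^{-n-M}(1+|y|)^{-m-M}$. Since $M\gg1$, we have $2(n+M)>n$ and $2(m+M)>m$, so this majorant is square-integrable on $\mathbb R^n\times\mathbb R^m$ by Tonelli's theorem, whence $g\in L^2(\mathbb R^{n+m})$ with $\|g\|_{L^2}\le C\|g\|_{\mathcal S_M}$.

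Next, for $f\in H_X(\mathbb R^n\times\mathbb R^m)$ I would invoke the discrete Calder\'on identity from Proposition~\ref{DCRF} and, exactly as in the proof of Theorem~\ref{dense}, form the truncated sums
$$
g_{L,r}:=\sum_{(j,k,I,J)\in W}|I||J|\,\tilde\psi_{j,k}(\cdot,\cdot,x_I,y_J)\,\psi_{j,k}\ast f(x_I,y_J),
$$
which are finite linear combinations of the test functions $\tilde\psi_{j,k}(\cdot,\cdot,x_I,y_J)\in\mathcal S_M$ and hence again belong to $\mathcal S_M(\mathbb R^n\times\mathbb R^m)$. By the previous paragraph each $g_{L,r}\in L^2(\mathbb R^{n+m})$, and since $f\in H_X$ and Theorem~\ref{dense} gives $\|f-g_{L,r}\|_{H_X}\to0$ as $L,r\to\infty$, each $g_{L,r}=f-(f-g_{L,r})\in H_X(\mathbb R^n\times\mathbb R^m)$. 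Thus $\{g_{L,r}\}\subset H_X(\mathbb R^n\times\mathbb R^m)\cap L^2(\mathbb R^{n+m})$ and $g_{L,r}\to f$ in $H_X$, which proves the density.

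I expect the only genuinely technical point to be the clean verification that a finite sum of $\mathcal S_M$ test functions remains square-integrable; this is where the rapid decay encoded in Definition~\ref{s1de1}(i) and the hypothesis $M\gg1$ are used, while everything else is a formal consequence of Theorem~\ref{dense}. In particular, no additional smallness or absolute-continuity hypothesis beyond those already invoked in Theorem~\ref{dense} is needed, since the $L^2$ membership of the approximants is automatic for test functions.
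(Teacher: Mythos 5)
Your proposal is correct and is essentially the paper's intended argument: the corollary is stated without proof precisely because it follows from Theorem~\ref{dense} once one notes that the approximants produced there lie in $\mathcal S_M(\mathbb R^n\times\mathbb R^m)\subset L^2(\mathbb R^{n+m})$ (by the decay in Definition~\ref{s1de1}(i)) and belong to $H_X(\mathbb R^n\times\mathbb R^m)$ as differences of elements of $H_X$. Your write-up simply makes these two routine verifications explicit, which is exactly what the paper leaves implicit.
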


\section{Decomposition for product Hardy spaces}
In this section, we will establish a new decomposition for product Hardy spaces associated with ball quasi-Banach function spaces ${H}_X(\mathbb R^n\times\mathbb R^m)$. For our purpose, the following discrete type Calder\'on identity associated with $\phi$ is needed, which plays a key role in the proof of decomposition for ${H}_X(\mathbb R^n\times\mathbb R^m)$.
\begin{theorem}\label{DTCI}
Let $X(\mathbb R^n\times\mathbb R^m)$ is a ball quasi-Banach function space satisfying Assumption \ref{as:01} with some
$0<\theta<s\le1$. For any $1<q<\infty$,
then there exists an operator $T_N^{-1}$ such that
$$
f(x, y)=\sum_{j, k} \sum_{I, J}|I||J| \phi_{j, k}\left(x-x_I, y-y_J\right) \phi_{j, k} *\left(T_N^{-1}(f)\right)\left(x_I, y_J\right)
$$
where $T_N^{-1}$ is bounded on $L^q\left(\mathbb{R}^{n+m}\right)$ and $H_X\left(\mathbb{R}^n \times \mathbb{R}^m\right)$, and the series converges in $L^q\left(\mathbb{R}^{n+m}\right)$.
\end{theorem}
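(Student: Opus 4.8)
The plan is to construct the operator $T_N$ explicitly as the ``approximate reproducing'' operator coming from the continuous-to-discrete passage in the Calder\'on identity, show that it is invertible on both $L^q(\mathbb{R}^{n+m})$ and $H_X(\mathbb{R}^n\times\mathbb{R}^m)$ by a Neumann-series argument, and then read off the stated decomposition. Concretely, starting from the continuous Calder\'on reproducing formula $f=\sum_{j,k}\psi_{j,k}\ast\psi_{j,k}\ast f$ (valid in $(\mathcal{S}_M)'$), I would define
\begin{align*}
T_N(f)(x,y):=\sum_{j,k}\sum_{I,J}|I||J|\,\phi_{j,k}(x-x_I,y-y_J)\,\phi_{j,k}\ast f(x_I,y_J),
\end{align*}
so that $T_N$ is the discretized version of the identity at dyadic scale $N$. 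The point of Proposition \ref{DCRF} is precisely that for $N$ large (depending on $M$ and $\varphi$) the difference $T_N-\mathrm{Id}$ has small operator norm; writing $T_N=\mathrm{Id}-(\mathrm{Id}-T_N)$ and setting $T_N^{-1}=\sum_{\ell\ge 0}(\mathrm{Id}-T_N)^\ell$, the series converges in operator norm once we establish $\|\mathrm{Id}-T_N\|<1$ on each space. Substituting $T_N^{-1}f$ back into the definition of $T_N$ gives $f=T_N(T_N^{-1}f)$, which is exactly the claimed formula.

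First I would verify the $L^q(\mathbb{R}^{n+m})$ statement for $1<q<\infty$. Here the almost-orthogonality estimates of the kernels $\phi_{j,k}\ast\tilde\varphi_{j',k'}$ recorded in \cite[pp.~48]{DHLW} and used in the proof of Theorem \ref{Planc} give the smallness of $\|\mathrm{Id}-T_N\|_{L^q\to L^q}$ when $N\gg 1$; the two-parameter Fefferman--Stein strong maximal inequality (the classical $L^q$ case of Assumption \ref{as:01}) converts the pointwise almost-orthogonal bounds into an $L^q$ operator bound exactly as in the chain of inequalities in Theorem \ref{Planc}. This yields boundedness and invertibility of $T_N$ on $L^q$, and simultaneously the $L^q$-convergence of the defining series.

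For the $H_X$ statement I would argue entirely through the square-function norm: by Definition \ref{s3d1}, $\|g\|_{H_X}=\|\mathcal{G}(g)\|_X$, and the Plancherel--P\'olya equivalence of Theorem \ref{Planc} lets me replace $\mathcal{G}$ by the discrete square function built from the points $(x_I,y_J)$. Applying the kernel estimates to $\mathrm{Id}-T_N$ at the level of the discrete Littlewood--Paley coefficients, then invoking the vector-valued maximal inequalities \eqref{e2.6} and \eqref{e2.7} of Assumption \ref{as:01} with the parameter choice $r=2\theta/s$ (exactly as in Theorem \ref{Planc}), I expect to obtain $\|(\mathrm{Id}-T_N)f\|_{H_X}\le \delta(N)\|f\|_{H_X}$ with $\delta(N)\to 0$ as $N\to\infty$. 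Fixing $N$ so large that $\delta(N)<1$ gives the Neumann series for $T_N^{-1}$ on $H_X$, hence its boundedness there. I anticipate that the main obstacle is making the smallness $\delta(N)<1$ genuinely uniform in the $X$-norm: since $X$ has no explicit expression, one cannot estimate the operator directly and must route everything through the two maximal inequalities, so the delicate point is to sum the almost-orthogonality factors $2^{-|j-j'|M_1}2^{-|k-k'|M_2}$ \emph{before} applying $\mathcal{M}_s$ and to keep the resulting constant strictly below $1$ after the $X^{s/2}$ and $X$ passages, which forces $M$ (and thus $N$) to be taken sufficiently large relative to $n,m,\theta,s$.
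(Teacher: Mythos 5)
Your proposal is correct and follows essentially the same route as the paper: the paper also writes $f = T_N(f) + \mathcal{R}(f)$ with $\mathcal{R} = \mathrm{Id} - T_N$, proves $\|\mathcal{R}(f)\|_{H_X(\mathbb{R}^n\times\mathbb{R}^m)} \le C2^{-N}\|f\|_{H_X(\mathbb{R}^n\times\mathbb{R}^m)}$ via the almost-orthogonality estimates, the strong maximal operator, and Assumption \ref{as:01} exactly as in Theorem \ref{Planc}, and then sets $T_N^{-1} = \sum_{i\ge 0}\mathcal{R}^i$ as a Neumann series. The only cosmetic difference is that the paper simply cites \cite[Theorem 3.4]{DHLW} for the $L^q$ boundedness and convergence rather than rerunning the argument there, as you propose to do.
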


\begin{proof} 
From \cite[Theorem 3.4]{DHLW}, we know that
the series converges in $L^q\left(\mathbb{R}^{n+m}\right)$
and $T_N^{-1}$ is bounded on $L^q\left(\mathbb{R}^{n+m}\right)$.
To achieve our goal, we only need to prove that $T_N^{-1}$ is also bounded on $H_X\left(\mathbb{R}^n \times \mathbb{R}^m\right)$.
First observe that
$$
\begin{aligned}
f(x, y) &=\sum_{j, k, I, J} \int_I \int_J \phi_{j, k}(x-u, y-w)\left(\phi_{j, k} * f\right)(u, w) d u d w \\
&=:\sum_{j, k} \sum_{I, J}|I||J| \phi_{j, k}\left(x-x_I, y-y_J\right)\left(\phi_{j, k} * f\right)\left(x_I, y_J\right)+\mathcal{R}(f)(x, y) .
\end{aligned}
$$
where $\mathcal R$ is the remainder operator.
Then we conclude that
$$
\|\mathcal{R}(f)\|_{H_X\left(\mathbb{R}^n \times \mathbb{R}^m\right)} \leq C 2^{-N}\|f\|_{H_X\left(\mathbb{R}^n \times \mathbb{R}^m\right)} .
$$
Indeed, by using Proposition \ref{DCRF}, the almost orthogonality estimate and the similar argument as in the proof of Theorem \ref{Planc} we have
\begin{align*}
&\|\mathcal{R}(f)\|_{H_X\left(\mathbb{R}^n \times \mathbb{R}^m\right)}=\|\mathcal{G}(\mathcal{R}(f))\|_{X(\mathbb{R}^n \times \mathbb{R}^m)} \\
&\leq C\left\|\left\{\sum_{j, k} \sum_{I, J}\left|\psi_{j, k} * \mathcal{R}(f)\right|^2 \chi_I \chi_J\right\}^{1 / 2}\right\|_{X(\mathbb{R}^n \times \mathbb{R}^m)} \\
&=C\left\|\left\{\left.\sum_{j, k, I, J} \sum_{y^{\prime}, k^{\prime}, r^{\prime}, J^{\prime}}\left|J^{\prime}\right|\left\|I^{\prime}\right\| \psi_{j, k} * \mathcal{R}\left(\tilde{\psi}_{p^{\prime}, k^{\prime}}\left(\cdot, x_{I^{\prime}}, \cdot, J^{\prime}\right) \cdot \psi_{j^{\prime}, k^{\prime}} * f\left(x_{I^{\prime}}, y_{J^{\prime}}\right)\right)\right|^2 \chi_I \chi_J\right\}^{1 / 2}\right\|_{X(\mathbb{R}^n \times \mathbb{R}^m)}\\
& \leq C 2^{-N}\left\|\left\{\sum_{\gamma, k^{\prime}}\left\{\mathcal{M}_s\left(\sum_{J^{\prime}, l^{\prime}}\left|\psi_{j, k^{\prime}} * f\left(x_{I^{\prime}}, y_{J^{\prime}}\right)\right| \chi_{J^{\prime}} \chi_{I^{\prime}}\right)^r\right\}^{\frac{2}{r}}\right\}^{\frac{1}{2}}\right\|_{X(\mathbb{R}^n \times \mathbb{R}^m)} \\
& \leq C 2^{-N}\left\|\left\{\sum_{j^{\prime}, k^{\prime}, J^{\prime}, I^{\prime}}\left|\psi_{\prime^{\prime}, k^{\prime}} * f\left(x_{I^{\prime}}, y_{J^{\prime}}\right)\right|^2 \chi_{I^{\prime}} \chi_{J^{\prime}}\right\}^{\frac{1}{2}}\right\|_{X(\mathbb{R}^n \times \mathbb{R}^m)} \leq C 2^{-N}\|f\|_{H_X\left(\mathbb{R}^n \times \mathbb{R}^m\right)} .
\end{align*}
where $j, k, \psi, \chi_I, \chi_J, x_I, y_J$ are the same as in Theorem \ref{Planc}.
For our purpose, we write $\left(T_N\right)^{-1}=\sum_{i=0}^{\infty} \mathcal R^i$, where
$$
T_N(f)=\sum_{j, k, J, I}|I||J| \phi_{j, k}\left(x-x_I, y-y_J\right)\left(\phi_{j, k} * f\right)\left(x_I, y_J\right) .
$$
From the fact that if $N$ is large enough, then both of $T_N$ and $\left(T_N\right)^{-1}$ are bounded on $H_X(\mathbb{R}^n\times\mathbb{R}^m)$ and $L^q\left(\mathbb{R}^{n+m}\right)$. Therefore, this completes the proof.
\end{proof}

Let $X(\mathbb R^n\times\mathbb R^m)$ is a ball quasi-Banach function space and $2\leq q<\infty$.
A measurable function $b$ is said to be an $(X,q)-$product atom,
if there exists a measurable open set $\Omega$ of $\mathbb R^{n+m}$ with finite measure such that $\mbox{supp}(b)\subset \Omega$ and 
$$\|b\|_{L^q(\mathbb R^{n+m})}\le C\frac{|\Omega|^{\frac{1}{q}}}{\|\chi_{\Omega}\|_{X(\mathbb R^{n+m})}}.$$
Furthermore, $b$ can be decomposed into a rectangle $(X,q)-$atom $b_R$ associated
to the rectangle $R=I\times J$ which is supported in $3R$ with
$$
b=\sum_{R\in m(\Omega)}b_R;\quad \left(\sum_{R\in m(\Omega)}\|b_R\|^q_{L^q(\mathbb R^{n+m})}\right)^{\frac{1}{q}}
\le \frac{|\Omega|^{\frac{1}{q}}}{\|\chi_\Omega\|_{X(\mathbb R^{n+m})}},
$$
where $m(\Omega)$ is the set of all maximal rectangles contained in $\Omega$, $b_R$ is supported
in $3R$ and
for any $0\le |\alpha|,|\beta|\le N_0$,
$$
\int_{\mathbb R^n}b_R(x,y)x^\alpha dx=0, \; a.e.\;y\in R^m;
\quad\quad 
\int_{\mathbb R^m}b_R(x,y)y^\alpha dy=0, \; a.e.\;x\in R^n.
$$

Now we state the main result in this section as follows.
\begin{theorem}\label{Decom}
Let $X(\mathbb R^n\times\mathbb R^m)$ is a ball quasi-Banach function space satisfying Assumption \ref{as:01} with some
$0<\theta<s\le1$.
Assume further that $X(\mathbb R^n\times\mathbb R^m)$ has an absolutely
continuous quasi-norm and $2\leq q<\infty$. If $f\in H_X(\mathbb R^n\times\mathbb R^m)\cap L^q(\mathbb R^{n+m})$, 
then there exists countable collections of of $(X,q)-$product atom $\{b_\ell\}_\ell$ of
$H_X(\mathbb R^n\times\mathbb R^m)$ and
of non-negative numbers $\{\lambda_\ell\}_\ell$
such that
$$
f(x, y)=\sum_{\ell}\lambda_\ell b_\ell(x,y),
$$
and that
$$
\left\|\left\{\sum_{\ell=1}^\infty\left(\frac{\lambda_\ell}{\|\chi_{\Omega_\ell\|_{X(\mathbb R^{(n+m)}}}}\right)^s\chi_{\Omega_\ell}\right\}^{\frac{1}{s}}\right\|_{X(\mathbb R^{n+m})}
\le C\|f\|_{H_X(\mathbb R^n\times\mathbb R^m)}
$$
where the series holds in $L^q\left(\mathbb{R}^{n+m}\right)$ and $H_X\left(\mathbb{R}^n \times \mathbb{R}^m\right)$.
\end{theorem}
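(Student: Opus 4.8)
The plan is to regroup the discrete Calderón reproducing formula of Theorem \ref{DTCI} according to the level sets of the discrete square function, and to package each level into a single $(X,q)$-product atom. First I would set $g:=T_N^{-1}f$, which by Theorem \ref{DTCI} lies in $H_X(\mathbb R^n\times\mathbb R^m)\cap L^q(\mathbb R^{n+m})$ with $\|g\|_{H_X}\le C\|f\|_{H_X}$, and which gives $f=\sum_{j,k}\sum_{I,J}|I||J|\phi_{j,k}(\cdot-x_I,\cdot-y_J)\,c_R$ with coefficients $c_R:=\phi_{j,k}*g(x_I,y_J)$, the series converging in $L^q$. For $\ell\in\mathbb Z$ I set the level set $\mathcal O_\ell:=\{\mathcal G_\phi^d(g)>2^\ell\}$ and its strong-maximal enlargement $\Omega_\ell:=\{\mathcal M_s\chi_{\mathcal O_\ell}>1/2\}$ (this $\Omega_\ell$ is the set appearing in the statement). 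Since $g\in L^q$, the square function lies in $L^q$, so each $\mathcal O_\ell$ and, by the weak $(q,q)$ bound for $\mathcal M_s$, each $\Omega_\ell$ is open with finite measure, and the $\mathcal O_\ell$ are nested decreasing. I then assign every index rectangle $R=I\times J$ (with $l(I)=2^{-j-N}$, $l(J)=2^{-k-N}$) to the unique level $\mathcal R_\ell:=\{R:\ |R\cap\mathcal O_\ell|>\tfrac12|R|,\ |R\cap\mathcal O_{\ell+1}|\le\tfrac12|R|\}$; every such $R$ then satisfies $R\subset\Omega_\ell$.

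Next I would define $\lambda_\ell:=2^\ell\|\chi_{\Omega_\ell}\|_{X}$ and $b_\ell:=\lambda_\ell^{-1}\sum_{R\in\mathcal R_\ell}|I||J|\phi_{j,k}(\cdot-x_I,\cdot-y_J)\,c_R$, so that $f=\sum_\ell\lambda_\ell b_\ell$ is merely a regrouping of the Calderón sum. Because $\phi^{(1)},\phi^{(2)}\in C_c^\infty$ carry vanishing moments up to order $M_0$, each building block is compactly supported near its rectangle and has the required double cancellation; grouping the pieces with $R\subset R_0$ over the maximal dyadic rectangles $R_0\in m(\Omega_\ell)$ produces the rectangle decomposition $b_\ell=\sum_{R_0\in m(\Omega_\ell)}b_{\ell,R_0}$, each $b_{\ell,R_0}$ supported in a fixed dilate of $R_0$ and satisfying the moment conditions with $N_0\le M_0$. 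The $L^q$ normalization $\|b_\ell\|_{L^q}\le C|\Omega_\ell|^{1/q}/\|\chi_{\Omega_\ell}\|_{X}$, together with the summed rectangle bound, reduces by the $L^q$-boundedness of the Calderón synthesis operator to the estimate $\big\|\big(\sum_{R\in\mathcal R_\ell}|c_R|^2\chi_I\chi_J\big)^{1/2}\big\|_{L^q}\le C2^\ell|\Omega_\ell|^{1/q}$; this is the geometric good-$\lambda$ step, which follows from the fact that each $R\in\mathcal R_\ell$ has at least half its mass in $\mathcal O_\ell\setminus\mathcal O_{\ell+1}$, where $\mathcal G_\phi^d(g)\le 2^{\ell+1}$.

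It then remains to bound the coefficient quantity, which with the above choice equals $\big\|\{\sum_\ell 2^{\ell s}\chi_{\Omega_\ell}\}^{1/s}\big\|_{X}$ since $\lambda_\ell/\|\chi_{\Omega_\ell}\|_{X}=2^\ell$. Using $\mathcal M_s^{(\theta)}(2^\ell\chi_{\mathcal O_\ell})=2^\ell(\mathcal M_s\chi_{\mathcal O_\ell})^{1/\theta}$ and the defining inequality $\mathcal M_s\chi_{\mathcal O_\ell}>1/2$ on $\Omega_\ell$, one has the pointwise domination $2^\ell\chi_{\Omega_\ell}\le C\,\mathcal M_s^{(\theta)}(2^\ell\chi_{\mathcal O_\ell})$; applying the vector-valued inequality (\ref{e2.6}) of Assumption \ref{as:01} controls the expression by $\big\|\{\sum_\ell 2^{\ell s}\chi_{\mathcal O_\ell}\}^{1/s}\big\|_{X}$. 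Finally, the nesting $\mathcal O_{\ell+1}\subset\mathcal O_\ell$ turns $\sum_\ell 2^{\ell s}\chi_{\mathcal O_\ell}(z)$ into a geometric sum dominated by $C[\mathcal G_\phi^d(g)(z)]^s$, whence the whole quantity is $\le C\|\mathcal G_\phi^d(g)\|_{X}\sim C\|g\|_{H_X}\le C\|f\|_{H_X}$. Convergence of $\sum_\ell\lambda_\ell b_\ell$ in $L^q$ is inherited from Theorem \ref{DTCI}, and convergence in $H_X$ follows from this norm control together with the absolute continuity of the quasi-norm of $X$.

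The genuinely product-theoretic obstacle is that the level sets $\mathcal O_\ell$ are arbitrary open sets rather than unions of rectangles, so no single rectangle can serve as an atom's support; the enlargement $\Omega_\ell=\{\mathcal M_s\chi_{\mathcal O_\ell}>1/2\}$ and the strong-maximal vector-valued inequality of Assumption \ref{as:01} are exactly what replace Journé's covering lemma in this framework. I expect the most delicate computations to be the $L^q$ good-$\lambda$ normalization of the atoms in the second paragraph, especially verifying the summed rectangle-$L^q$ bound for $q>2$, while the $X$-norm estimate of the coefficients is comparatively clean once Assumption \ref{as:01} and the nesting of the $\mathcal O_\ell$ are in place.
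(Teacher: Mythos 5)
Your proposal follows essentially the same route as the paper's proof: the discrete Calder\'on identity of Theorem \ref{DTCI} applied to $g=T_N^{-1}f$, level sets of the discrete square function with the half-mass rectangle assignment and strong-maximal enlargement $\Omega_\ell$, a duality/Fefferman--Stein argument for the $L^q$ normalization of the atoms, and Assumption \ref{as:01} together with the nesting of the level sets for the coefficient estimate (indeed, your handling of the enlarged sets via $2^\ell\chi_{\Omega_\ell}\le C\,\mathcal M_s^{(\theta)}(2^\ell\chi_{\mathcal O_\ell})$ is cleaner than the paper's, which conflates $\Omega_\ell$ with its enlargement in the final display). The only slip is cosmetic: a rectangle $R\in\mathcal R_\ell$ need not have half its mass in $\mathcal O_\ell\setminus\mathcal O_{\ell+1}$ (only positive mass is guaranteed there); the half-mass set is $R\setminus\mathcal O_{\ell+1}$, which lies in $\Omega_\ell\setminus\mathcal O_{\ell+1}$ where $\mathcal G_\phi^d(g)\le 2^{\ell+1}$, and with this correction your good-$\lambda$ step goes through exactly as in the paper.
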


\begin{proof}
Define a new discrete Littlewood--Paley--Stein function by
$$
\tilde{g}(f)(x, y)=\left\{\sum_{j, k} \sum_{I, J}\left|\phi_{j, k} *\left(T_N^{-1}(f)\right)\left(x_I, y_J\right)\right|^2 \chi_I(x) \chi_J(y)\right\}^{1 / 2} .
$$
By following the similar argument to the proof of Theorem \ref{Planc}, for any $f \in H_X\left(\mathbb{R}^n \times \mathbb{R}^m\right)$, we conclude that
$$
 \|f\|_{H_X\left(\mathbb{R}^n \times \mathbb{R}^m\right)}\sim \|\tilde{g}(f)\|_{X\left(\mathbb{R}^{n+m}\right)}.
$$
Suppose that $f \in H_X\left(\mathbb{R}^n \times \mathbb{R}^m\right)\cap L^q(\mathbb R^{n+m})$. Set
$$
\Omega_\ell=\left\{(x, y) \in \mathbb{R}^n \times \mathbb{R}^m: \tilde{g}(f)(x, y)>2^\ell\right\}
$$
Denote
$$
B_\ell=\left\{R=I \times J: \left|R \cap \Omega_\ell\right|>\frac{1}{2}|R|, \left|R \cap \Omega_{\ell+1}\right| \leq \frac{1}{2} |R|\right\}
$$
where $I, J$ are dyadic cubes in $\mathbb{R}^n, \mathbb{R}^m$ with side-length $2^{-j-N}, 2^{-k-N}$ respectively.
For short, we denote that $\phi_R:=\phi_{j, k}$, where $R=I \times J$. By Proposition \ref{DCRF}, we rewrite
$$
f(x, y)=\sum_\ell \sum_{R \in B_\ell}|R| \tilde{\phi}_R\left(x-x_I, y-y_J\right) \phi_R *\left(T_N^{-1}(f)\right)\left(x_I, y_J\right)
$$
where the series converges in $L^q$ norm.
Define an open set $\tilde\Omega_\ell$ by
$$
\tilde\Omega_\ell:=\{(x,y):\mathcal M_s(\chi_{\Omega_\ell})(x,y)>\frac{1}{1000}\}.
$$
 To establish the decomposition, we then rewrite
$$
f(x, y)=\sum_{\ell}\lambda_\ell b_\ell(x,y),
$$
where $$\lambda_\ell=C2^{\ell}
\|\chi_{\tilde\Omega_\ell}\|_{X(\mathbb R^{n+m})}
$$
%\left\|\left\{\sum_{R\in B_\ell}|\phi_R\ast \Big(T^{-1}_N(f)\Big)(x_I,x_J)|^2\chi_{R}\right\}^{\frac{1}{2}}\right\|_{L^2(\mathbb R^{n+m})}|\tilde\Omega_\ell|^{-\frac{1}{2}}
and
$$
b_\ell(x,y)=\frac{1}{\lambda_\ell}\sum_{R \in B_\ell}|R| \tilde{\phi}_R\left(x-x_I, y-y_J\right) \phi_R *\left(T_N^{-1}(f)\right)\left(x_I, y_J\right).
$$
If $R\in B_\ell$, then $\phi_R$ is supported in $\tilde\Omega_\ell$. This implies that
$\mbox{supp}(b_\ell)\subset\tilde\Omega_\ell$.
Applying the duality theorem, the Cauchy--Schwarz inequality and the H\"older inequality, it yields that
\begin{align*}
&\left\|\sum_{R \in B_\ell}|R| \widetilde{\phi}_R\left(x-x_I, y-y_J\right) \phi_R *\left(T_N^{-1}(f)\right)\left(x_I, y_J\right)\right\|_{L^q(\mathbb R^{n+m})}\\
=& \sup_{\|h\|_{L^{q'}(\mathbb R^{n+m})}\le 1}\left|\left\langle\sum_{R \in B_\ell}|R|\phi_R\left(x-x_I, y-y_J\right) \phi_R\ast\left(T_N^{-1}(f)\right)\left(x_I, y_J\right), h\right\rangle\right| 
\\
=&\sup_{\|h\|_{L^{q'}(\mathbb R^{n+m})}\le 1}\left|\sum_{R \in B_\ell}|R|\bar{\phi}_R \ast h\left(x_I, y_J\right) \phi_R *\left(T_N^{-1}(f)\right)\left(x_I, y_J\right)\right| \\
=& \sup_{\|h\|_{L^{q'}(\mathbb R^{n+m})}\le 1}\left|\sum_{R \in B_\ell} \int_{\mathbb R^n\times\mathbb R^m} \bar{\phi}_R\ast h\left(x_I, y_J\right) \phi_R\ast\left(T_N^{-1}(f)\right)\left(x_I, y_J\right) \chi_R(x, y) dxdy\right| \\
\leq &\sup_{\|h\|_{L^{q'}(\mathbb R^{n+m})}\le 1}\left(\int_{\mathbb R^n\times \mathbb R^m}\left(\sum_{R \in B_\ell}\left|\phi_R\ast\left(T_N^{-1}(f)\right)\left(x_I, y_J\right)\right|^2 \chi_R(x, y)\right)^{\frac{q}{2}}dxdy\right)^{\frac{1}{q}} \\
& \times\left(\int_{\mathbb R^n\times\mathbb R^m}\left(\sum_{R \in B_\ell}\left|\bar{\phi}_R\ast h\left(x_I, y_J\right)\right|^2 \chi_R(x, y)\right)^{\frac{q'}{2}}dxdy\right)^{\frac{1}{q'}} \\
\leq &C\left(\int_{\mathbb R^n\times \mathbb R^m}\left(\sum_{R \in B_\ell}\left|\phi_R\ast\left(T_N^{-1}(f)\right)\left(x_I, y_J\right)\right|^2 \chi_R(x, y)\right)^{\frac{q}{2}}dxdy\right)^{\frac{1}{q}} 
\end{align*}
where $\bar{\phi}_{j, k}(x, y)=\phi_{j, k}(-x,-y)$.
To end it, by applying the classical Fefferman--Stein vector valued inequality, we conclude that
\begin{align*}
&\left(\int_{\mathbb R^n\times \mathbb R^m}\left(\sum_{R \in B_\ell}\left|\phi_R\ast\left(T_N^{-1}(f)\right)\left(x_I, y_J\right)\right|^2 \chi_R(x, y)dxdy\right)^{\frac{q}{2}}\right)^{\frac{1}{q}} \\
=&\left\|\left\{\sum_{R \in B_\ell}
\left|\phi_R\ast\left(T_N^{-1}(f)\right)\left(x_I, y_J\right)\right|^2 \chi_R\right\}^{\frac{1}{2}}\right\|_{L^q(\mathbb R^{n+m})}\\
\leq &C\left\|\left\{\sum_{R \in B_\ell}
\left|\phi_R\ast\left(T_N^{-1}(f)\right)\left(x_I, y_J\right)\right|^2 \mathcal{M}_s^2\left(\chi_{R \cap \tilde{\Omega}_\ell \backslash \Omega_{\ell+1}}\right)\right\}^{\frac{1}{2}}\right\|_{L^q(\mathbb R^{n+m})}\\
\leq &C\left\|\left\{\sum_{R \in B_\ell}
\left|\phi_R\ast\left(T_N^{-1}(f)\right)\left(x_I, y_J\right)\right|^2 \chi_{R \cap \tilde{\Omega}_\ell \backslash \Omega_{\ell+1}}\right\}^{\frac{1}{2}}\right\|_{L^q(\mathbb R^{n+m})}\\
\leq &C\left\|2^{\ell} \chi_{R \cap \tilde{\Omega}_\ell \backslash \Omega_{\ell+1}}\right\|_{L^q(\mathbb R^{n+m})}
\leq C2^\ell|\tilde\Omega_\ell|^{\frac{1}{q}}
\end{align*}
where the first inequality comes from the fact that 
$\mathcal{M}_s\left(\chi_{R \cap \tilde{\Omega}_\ell \backslash \Omega_{\ell+1}}\right)\left(x, y\right)>\frac{1}{2}$ and then 
$\chi_R\left(x, y\right) \leq C\mathcal{M}_s^2\left(\chi_{R \cap \tilde{\Omega}_\ell \backslash \Omega_{\ell+1}}\right)\left(x, y\right)$.
%and where the last inequality comes from that by the $L^2$ boundedness of $\mathcal{M}_s,$ $|\widetilde{\Omega}_\ell| \leq C\left|\Omega_\ell\right|$.
Therefore,  we get that $$\|b\|_{L^q(\mathbb R^{n+m})}\le C\frac{|\tilde \Omega_\ell|^{\frac{1}{q}}}{\|\chi_{\tilde\Omega_\ell}\|_{X(\mathbb R^{n+m})}}.$$
Moreover, we further construct the rectangle atoms. We rewrite
$b_\ell=\sum_{Q\in m(\tilde\Omega_\ell)}$, where
$$
b^Q_\ell(x,y)=\frac{1}{\lambda_\ell}\sum_{R \in B_\ell,\; R\subset Q}|R| \tilde{\phi}_R\left(x-x_I, y-y_J\right) \phi_R *\left(T_N^{-1}(f)\right)\left(x_I, y_J\right).
$$
To see the support condition and moment condition of $b^Q_\ell$,
it follows from that of $\tilde\phi$. Meanwhile, by using the above estimate for $\|b\|_{L^q({\mathbb R^{n+m}})}$, we conclude that
\begin{align*}
\sum_{Q\in m(\tilde \Omega_\ell)}\|b_\ell^Q\|^q_{L^q({\mathbb R^{n+m})}}&
=\sum_{Q\in m(\tilde \Omega_\ell)}\frac{1}{\lambda^q_\ell}
\left\|\sum_{R \in B_\ell,\; R\subset Q}|R| \tilde{\phi}_R\left(x-x_I, y-y_J\right) \phi_R *\left(T_N^{-1}(f)\right)\left(x_I, y_J\right)\right\|^q_{L^q({\mathbb R^{n+m})}}\\
&=\frac{1}{2^{\ell q}\|\chi_{\tilde\Omega}\|^q_{X(\mathbb R^{n+m})}}
\left\|\sum_{R \in B_\ell}|R| \tilde{\phi}_R\left(x-x_I, y-y_J\right) \phi_R *\left(T_N^{-1}(f)\right)\left(x_I, y_J\right)\right\|^q_{L^q({\mathbb R^{n+m})}}\\&\le C\frac{|\tilde\Omega_\ell|}{\|\chi_{\tilde\Omega_\ell}\|^q_{X(\mathbb R^{n+m})}}.
\end{align*}

Since that $$\sum_{\ell}\left(2^\ell \chi_{\Omega_\ell}\right)^s\sim
\left(\sum_{\ell}2^\ell \chi_{\Omega_\ell\setminus\Omega_{\ell+1}}\right)^s,$$ we have 
\begin{align*}
&\left\|\left\{\sum_{\ell}\left(\frac{\lambda_\ell}{\|\chi_{\Omega_\ell\|_{X(\mathbb R^{(n+m)}}}}\right)^s\chi_{\Omega_\ell}\right\}^{\frac{1}{s}}\right\|_{X(\mathbb R^{(n+m)}}
\le C\left\|\left\{\sum_{\ell} 2^{\ell s}\chi_{\Omega_\ell}\right\}^{\frac{1}{s}}\right\|_{X(\mathbb R^{(n+m)}}\\
&\le C\left\|\sum_{\ell} 2^{\ell}\chi_{\Omega_\ell\setminus\Omega_{\ell+1}}\right\|_{X(\mathbb R^{(n+m)}}
\le C\left\|\sum_{\ell}\tilde g(f)\chi_{\Omega_\ell\setminus\Omega_{\ell+1}}\right\|_{X(\mathbb R^{(n+m)}}
\\
&\sim C\left\|\tilde g(f)\right\|_{X(\mathbb R^{(n+m)}}
= C\|f\|_{H_X(\mathbb R^n\times\mathbb R^m)}.
\end{align*}
This finishes the the proof of Theorem \ref{Decom}.
\end{proof}

\begin{remark}
When $X\left(\mathbb{R}^{n}\times\mathbb R^m\right)$ is the
classical Lebesgue space $L^{p}\left(\mathbb{R}^{n}\times\mathbb R^m\right)$
with $0<s=p<1$, 
Theorem \ref{Decom} was first proved by Han et al. \cite{HLZ}.
When $X\left(\mathbb{R}^{n}\times\mathbb R^m\right)$ is
the weighted Lebesgue space associated with product Muckenhoupt weights $L_w^{p}\left(\mathbb{R}^{n}\times\mathbb R^m\right)$ with $0<s=p<1$, 
Theorem \ref{Decom} was first established by Wu \cite{Wu}.
 To our best knowledge, when $X\left(\mathbb{R}^{n}\times\mathbb R^m\right)$ is the product Herz space $\vec{K}_q^{\alpha, p}\left(\mathbb{R}^n \times \mathbb{R}^m\right)$, the weighted product Morrey space $\mathcal{M}_{u,p}^{w}\left(\mathbb{R}^{n}\times\mathbb R^m\right)$ and the product Musielak--Orlicz--type space $L^\varphi(\mathbb R^m\times\mathbb R^m)$, respectively, Theorem \ref{Decom} is totally new.
\end{remark}

\section{Extrapolation on ball quasi Banach function spaces}

We now recall the product weight functions for the product domain from $\cite{gr85}$.
For $1<p<\infty$, we say that $w\in A_{p,R}$, if there exists a constant $C>0$, such that for
any rectangles $R$ in $\mathbb{R}^n\times \mathbb R^m$,
$$\left(\frac{1}{|R|}\int_{R}w(z) dz\right)
\left(\frac{1}{|R|}\int_{R} w(z)^{-\frac{1}{p-1}}dz\right)^{p-1}<\infty.$$
We say that a nonnegative measurable function $w\in A_{1,R}$ if for
any rectangles $R$ in $\mathbb{R}^n\times \mathbb R^m$
$$
\left(\frac{1}{|R|}\int_{R}w(z) dz\right)
\mbox{ess}\;\sup_{z\in R}w(z)^{-1}<\infty.
$$
A weight $w$ belongs to $A_{\infty,R}$, if there is $\delta>0$ and $C>0$ such that
for any rectangles $R$ in $\mathbb{R}^n\times \mathbb R^m$ and all measurable subsets $E\subset R$
$$
\frac{w(A)}{w(R)}\le C\left(\frac{|A|}{|R|}\right)^\delta.
$$
The relation between $A_{\infty,R}$ and $A_{p,R}$ is
$$A_{\infty,R}=\cup_{1\le p<\infty}A_{p,R}.$$

The known boundeness of the strong maximal operators
$\mathcal M_S$ is stated as follows.

\begin{proposition}\cite{gr85}\label{max}\quad Let $1<p<\infty$ and $w\in A_{p,R}.$ Then $\mathcal M_s$ is of type
$(L^p_w(\mathbb R^n\times\mathbb R^m),L^p_w(\mathbb R^n\times\mathbb R^m))$.
\end{proposition}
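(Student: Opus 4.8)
The plan is to control $\mathcal{M}_s$ pointwise by an iteration of one-parameter maximal operators and then reduce matters to the classical (one-parameter) Muckenhoupt weighted norm inequality by means of Fubini's theorem. For $(x_1,x_2)\in\mathbb{R}^n\times\mathbb{R}^m$ let $M^{(1)}$ denote the Hardy--Littlewood maximal operator acting in the first variable and $M^{(2)}$ the one acting in the second variable. Slicing a rectangle $R=I\times J\ni(x_1,x_2)$ as
$$
\frac{1}{|R|}\int_R|f(u,v)|\,du\,dv=\frac{1}{|I|}\int_I\left(\frac{1}{|J|}\int_J|f(u,v)|\,dv\right)du\le \frac{1}{|I|}\int_I M^{(2)}f(u,x_2)\,du\le M^{(1)}\big(M^{(2)}f\big)(x_1,x_2),
$$
and taking the supremum over all such $R$, I would obtain the pointwise domination $\mathcal{M}_sf\le M^{(1)}\big(M^{(2)}f\big)$.

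The key input is a slicing characterization of the rectangular weight: if $w\in A_{p,R}$ with constant $[w]_{A_{p,R}}$, then for a.e.\ fixed $x_2$ the function $w(\cdot,x_2)$ belongs to $A_p(\mathbb{R}^n)$ with $[w(\cdot,x_2)]_{A_p}\le [w]_{A_{p,R}}$, and symmetrically $w(x_1,\cdot)\in A_p(\mathbb{R}^m)$ uniformly in $x_1$. To prove this I would fix a cube $I\subset\mathbb{R}^n$, apply the rectangular $A_{p,R}$ inequality to $R=I\times J$, and let $J$ shrink to $x_2$; the Lebesgue differentiation theorem (applied to the two locally integrable functions $w$ and $w^{-1/(p-1)}$) yields
$$
\left(\frac{1}{|I|}\int_I w(u,x_2)\,du\right)\left(\frac{1}{|I|}\int_I w(u,x_2)^{-\frac{1}{p-1}}\,du\right)^{p-1}\le [w]_{A_{p,R}}
$$
for a.e.\ $x_2$ and every $I$, which is exactly the uniform one-parameter condition.

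It then remains to assemble the estimate. Using the pointwise bound and writing the $L^p_w$ norm as an iterated integral, I would first freeze $x_2$ and apply the one-parameter Muckenhoupt theorem for $M^{(1)}$ on $L^p\big(\mathbb{R}^n,w(\cdot,x_2)\,dx_1\big)$, whose operator norm is controlled by $[w(\cdot,x_2)]_{A_p}\le [w]_{A_{p,R}}$ uniformly in $x_2$; this gives
$$
\int_{\mathbb{R}^n}\big|M^{(1)}(M^{(2)}f)(x_1,x_2)\big|^p w(x_1,x_2)\,dx_1\le C\int_{\mathbb{R}^n}\big|M^{(2)}f(x_1,x_2)\big|^p w(x_1,x_2)\,dx_1.
$$
Integrating in $x_2$, applying Fubini, and repeating the argument in the second variable with $M^{(2)}$ and the uniform bound $[w(x_1,\cdot)]_{A_p}\le[w]_{A_{p,R}}$ yields $\|\mathcal{M}_sf\|_{L^p_w}\le C\|f\|_{L^p_w}$.

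The main obstacle I anticipate is the weight characterization in the second step: one must handle the measurability and the uniformity of the $A_p$ constants in the frozen variable carefully, ensuring that the exceptional null sets coming from the Lebesgue differentiation theorem can be taken independent of the cube $I$. This is arranged by first restricting to cubes with rational centres and radii and then passing to arbitrary cubes by the continuity of the averages. Once this uniform splitting is secured, the remaining steps are a routine iteration of the classical one-parameter theory.
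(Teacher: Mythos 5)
Your proof is correct, and it is essentially the classical argument: the paper offers no proof of its own but cites Garc\'{\i}a-Cuerva and Rubio de Francia, where the result is proved exactly this way --- pointwise domination of $\mathcal M_s$ by the iterated one-parameter maximal operators, the slicing lemma showing $w\in A_{p,R}$ gives uniform one-parameter $A_p$ bounds in each variable separately, and then Fubini together with the one-parameter Muckenhoupt theorem applied in each variable. Your handling of the exceptional null sets via rational cubes is the standard (and correct) way to secure the uniformity needed for the frozen-variable step.
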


First we need the $A_\infty$ extrapolation in weighted product domain $\mathbb R^n\times\mathbb R^m$. The $A_\infty$ extrapolation for the general Muckenhoupt basis in $\mathbb R^n$ was proved in \cite[Corollary 3.15]{CMP}. Now we state the following $A_\infty$ extrapolation in weighted product domain $\mathbb R^n\times\mathbb R^m$,
whose proof is nearly identical to the corresponding one of \cite[Corollary 3.15]{CMP}]; we omit the details.

\begin{theorem}\label{s2th1}\quad Let $\mathcal{F}$ denote a family of ordered pairs of
non-negative measurable functions $(f, g)$. Assume that for some $p_0$ with $0<p_0<\infty$
and every weight $w_0\in A_{\infty,R}$,
\begin{align}\label{s2i1}
\int_{\mathbb{R}^n\times \mathbb R^m} f(x,y)^{p_0}w_0(x,y)dxdy\leq C_0
\int_{\mathbb{R}^n\times \mathbb R^m} g(x,y)^{p_0}w_0(x,y)dxdy,\quad (f, g)\in \mathcal{F}.
\end{align}

Then for all $p$ such that $0<p<\infty$ and for all
$w\in {A_{\infty,R}}$,
\begin{align}\label{s2i2}
\int_{\mathbb{R}^n\times \mathbb R^m} f(x,y)^{p}w(x,y)dxdy\leq C_0
\int_{\mathbb{R}^n\times \mathbb R^m} g(x,y)^{p}w(x,y)dxdy,\quad (f, g)\in \mathcal{F}.
\end{align}

Moreover, for all $0<p, r<\infty$, for all
$w\in {A_{\infty,R}}$, and all sequences $\{(f_j,g_j)\}_j\subset \mathcal F$,
\begin{align}\label{s2i3}
  \left\|\left(\sum_j|f_j|^r\right)^{1/r}\right\|_{L_w^{p}(\mathbb{R}^n\times \mathbb R^m)}\leq
  C\left\|\left(\sum_j|g_j|^r\right)^{1/r}\right\|_{L_w^{p}(\mathbb{R}^n\times \mathbb R^m)}.
\end{align}
\end{theorem}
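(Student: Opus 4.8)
The plan is to run the Rubio de Francia extrapolation machinery in the product setting, using the strong maximal operator $\mathcal{M}_s$ and the product Muckenhoupt classes $A_{p,R}$ in place of the classical Hardy--Littlewood maximal operator and the classes $A_p$. The whole weighted dictionary needed for this is available here: Proposition \ref{max} supplies the $L^p_w(\mathbb{R}^n\times\mathbb{R}^m)$-boundedness of $\mathcal{M}_s$ for $w\in A_{p,R}$, and $A_{\infty,R}=\bigcup_{1\le p<\infty}A_{p,R}$, so the reverse-H\"older and factorization properties of ordinary $A_\infty$ transfer to $A_{\infty,R}$. I would first reduce the proof to the scalar statement \eqref{s2i2}, and then deduce the vector-valued statement \eqref{s2i3} from it by a second application of the scalar result.

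For \eqref{s2i2}, fix $p\in(0,\infty)$, $w\in A_{\infty,R}$ and a pair $(f,g)\in\mathcal F$; we may assume $\int g^p w<\infty$. The mechanism is duality combined with the Rubio de Francia iteration. When $p>p_0$, I would write $\int f^p w=\|f^{p_0}\|_{L^{p/p_0}(w)}^{p/p_0}$ and dualize in $L^{p/p_0}(w)$, so that $\int f^p w=\big(\sup_h\int f^{p_0}h\,w\big)^{p/p_0}$, the supremum taken over $0\le h$ with $\|h\|_{L^{(p/p_0)'}(w)}\le1$. Replacing $h$ by a Rubio de Francia envelope $\mathcal{R}h\ge h$, built by iterating $\mathcal{M}_s$ (with respect to the measure $w\,dz$ if needed) so that $\|\mathcal{R}h\|_{L^{(p/p_0)'}(w)}\le 2\|h\|$ and $w\,\mathcal{R}h\in A_{\infty,R}$, I would apply the hypothesis \eqref{s2i1} with the weight $w_0=w\,\mathcal{R}h$ and then undo the duality by H\"older's inequality to recover $\int g^p w$. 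The more delicate downward case $p<p_0$, where $p/p_0<1$ and the naive duality fails, is handled by the dual form of the same iteration (building the auxiliary weight from the right-hand side), exactly as in the downward half of \cite[Corollary 3.15]{CMP}. Because the constant $C_0$ in \eqref{s2i1} is uniform over all of $A_{\infty,R}$, no tracking of the $A_{\infty,R}$-constant of the auxiliary weight is required — it only has to lie in $A_{\infty,R}$ — which is what makes the constant in \eqref{s2i2} independent of $w$ and comparable to $C_0$.

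Finally, for the vector-valued bound \eqref{s2i3} I would apply the scalar result twice. Having established \eqref{s2i2} for every exponent and every $A_{\infty,R}$ weight, I take $p=r$ and sum over $j$: for any $\{(f_j,g_j)\}_j\subset\mathcal F$ and any $w\in A_{\infty,R}$, $\int\big(\sum_j f_j^r\big)w\le C\int\big(\sum_j g_j^r\big)w$. This is precisely a hypothesis of the form \eqref{s2i1}, with exponent $r$, for the new family $\widetilde{\mathcal F}=\{(F,G)\}$ where $F=(\sum_j f_j^r)^{1/r}$ and $G=(\sum_j g_j^r)^{1/r}$. Applying the scalar extrapolation \eqref{s2i2} to $\widetilde{\mathcal F}$ then yields $\|F\|_{L^p_w}\le C\|G\|_{L^p_w}$ for all $p$ and all $w\in A_{\infty,R}$, which is \eqref{s2i3}. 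I expect the main obstacle to be the weight-construction step: producing, from the dual function, a Rubio de Francia envelope $\mathcal{R}h$ for which $w\,\mathcal{R}h$ genuinely lies in the \emph{product} class $A_{\infty,R}$ — note that a product of an $A_{1,R}$ weight with an $A_{\infty,R}$ weight need not remain in $A_{\infty,R}$, so the iteration must be set up with respect to the weighted strong maximal operator. This is the only point at which the argument differs in substance from the one-parameter \cite[Corollary 3.15]{CMP}, and it is precisely where the product reverse-H\"older inequality and the identity $A_{\infty,R}=\bigcup_p A_{p,R}$ are used.
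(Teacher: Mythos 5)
Your vector-valued step is correct and is exactly the standard reduction the paper (via \cite[Corollary 3.15]{CMP}) also uses: sum the scalar estimate \eqref{s2i2} with $p=r$ over $j$, observe that the pairs $\bigl((\sum_j f_j^r)^{1/r},(\sum_j g_j^r)^{1/r}\bigr)$ then satisfy a hypothesis of the form \eqref{s2i1} with exponent $r$, and apply the scalar result again. The genuine gap is in your scalar argument. Your upward duality step can only reach weights for which the Rubio de Francia iteration operator is bounded. With the standard construction $\mathcal{M}_s'h=\mathcal{M}_s(hw)/w$, boundedness on $L^{(p/p_0)'}(w)$ is equivalent (via Proposition \ref{max}) to $w\in A_{p/p_0,R}$; but a general $w\in A_{\infty,R}$ only lies in some $A_{r,R}$ with $r$ possibly far larger than $p/p_0$, and the ratio $p/p_0$ is invariant under rescaling the family, so this defect cannot be repaired inside the duality step. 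The mechanism that actually reaches every $A_{\infty,R}$ weight---and the reason the paper can describe its proof as nearly identical to \cite[Corollary 3.15]{CMP}---is the rescaling reduction: since \eqref{s2i1} holds for \emph{all} $A_{\infty,R}$ weights, the family $\{(f^\sigma,g^\sigma)\}$ satisfies the hypothesis at exponent $p_0/\sigma$ for all $A_{p_0/\sigma,R}$ weights; one then invokes the full $A_p$ extrapolation theorem for the Muckenhoupt basis of rectangles (\cite[Theorem 3.9]{CMP}, both its upward and downward halves, whose only inputs here are Proposition \ref{max} and the elementary factorization $u,v\in A_{1,R}\Rightarrow uv^{1-q}\in A_{q,R}$), and finally chooses $\sigma\le\min\{p_0,\,p/r\}$ so that the conclusion exponent $p/\sigma$ carries the class $A_{p/\sigma,R}\supset A_{r,R}\ni w$. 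This reduction is entirely absent from your outline, and without it neither your ``$p>p_0$'' nor your ``$p<p_0$'' case covers an arbitrary $A_{\infty,R}$ weight.

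The second problem is your proposed fix for the weight-construction step. Iterating the strong maximal operator with respect to the measure $w\,dz$ requires two facts that are neither proved by you nor available in the paper: (i) that the $w$-weighted strong maximal operator is bounded on $L^q(w)$---for the rectangle basis there is no Besicovitch covering theorem, and this is a genuinely deep theorem of R.~Fefferman for $w\in A_{\infty,R}$, whereas Proposition \ref{max} concerns only the unweighted $\mathcal{M}_s$; and (ii) that the resulting envelope satisfies $w\,\mathcal{R}h\in A_{\infty,R}$ in the paper's sense, i.e.
$\frac{(w\mathcal{R}h)(A)}{(w\mathcal{R}h)(R)}\lesssim(|A|/|R|)^{\delta}$ for $A\subset R$. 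What the iteration actually yields is an $A_1$-type condition relative to the measure $w\,dz$, and upgrading that to the required power decay needs a reverse H\"older inequality for rectangles with respect to $w\,dz$, which you do not establish (your own observation that $A_{1,R}\cdot A_{\infty,R}\not\subset A_{\infty,R}$ shows this step cannot be waved through). The standard construction avoids both difficulties: iterate $\mathcal{M}_s'h=\mathcal{M}_s(hw)/w$, so that $\mathcal{M}_s(w\mathcal{R}'h)\le C\,w\mathcal{R}'h$ holds pointwise by construction, i.e. $w\mathcal{R}'h\in A_{1,R}\subset A_{\infty,R}$, with the needed boundedness coming from Proposition \ref{max}; this is what forces the restriction $w\in A_{p/p_0,R}$ and hence makes the rescaling reduction of the previous paragraph indispensable. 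As written, the scalar half of your proof does not close.
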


The boundedness of the bi-parameter singular integral operator $\tilde T$ on the weighted product Hardy spaces are given in \cite[Theorem 1.2]{DHLW}.

\begin{theorem}\label{weighted}
Suppose that $w\in A_{\infty, R}$.
Then the bi-parameter singular integral operator $\tilde T$ is bounded from weighted product Hardy space ${H}_w^p(\mathbb R^n\times\mathbb R^m)$ to itself for $0<p<\infty$
and bounded from weighted product Hardy space 
${H}_w^p(\mathbb R^n\times\mathbb R^m)$ to $L_w^p(\mathbb R^{n+m})$ for $0<p\le 1$.
\end{theorem}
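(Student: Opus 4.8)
The plan is to reduce the statement to the Littlewood--Paley--Stein square function characterization together with the discrete Calder\'on reproducing formula of Proposition \ref{DCRF}; this is precisely \cite[Theorem 1.2]{DHLW}, so one may either cite it directly or reconstruct the argument along the following lines.

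For the $H_w^p\to H_w^p$ bound with $0<p<\infty$, I would first write $\|\widetilde T f\|_{H_w^p}\sim\|\mathcal G(\widetilde T f)\|_{L_w^p}$ and expand $f$ by the reproducing formula. Convolving with $\psi_{j,k}$ and applying $\widetilde T$ term by term yields
$$
\psi_{j,k}\ast\widetilde T f=\sum_{j',k',I',J'}|I'||J'|\,\bigl(\psi_{j,k}\ast\widetilde T\widetilde\varphi_{j',k'}\bigr)(\cdot,\cdot,x_{I'},y_{J'})\,(\varphi_{j',k'}\ast f)(x_{I'},y_{J'}).
$$
The heart of the matter is the two-parameter almost orthogonality estimate: using the differential inequalities and the cancellation condition on the kernel $K$ of $\widetilde T$ together with the vanishing moments of $\psi$ and $\widetilde\varphi$, one obtains a bound for $\psi_{j,k}\ast\widetilde T\widetilde\varphi_{j',k'}$ carrying the decay $2^{-|j-j'|M'}2^{-|k-k'|M'}$ in both parameters and the usual off-diagonal decay in the space variables. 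Summing against this kernel, dominating the sum by the strong maximal function exactly as in the proof of Theorem \ref{Planc}, and invoking the weighted Fefferman--Stein vector-valued maximal inequality (the vector-valued companion of Proposition \ref{max}, available for $w\in A_{\infty,R}$ after the standard reduction through $\mathcal M_s^{(r)}$ with $r$ small) then gives $\|\mathcal G(\widetilde T f)\|_{L_w^p}\lesssim\|\mathcal G^d(f)\|_{L_w^p}\sim\|f\|_{H_w^p}$.

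For the $H_w^p\to L_w^p$ bound with $0<p\le 1$, I would argue by atomic decomposition, so that it suffices to control $\widetilde T$ on a single product atom $b$ supported in an open set $\Omega$ and then to sum. On a fixed enlargement $\widetilde\Omega$ of $\Omega$ I would use the $L^q$-boundedness of $\widetilde T$ together with H\"older's inequality and the $A_{\infty,R}$ property of $w$; away from $\widetilde\Omega$ I would exploit the kernel decay of $K$ against the vanishing moments of the rectangle atoms $b_R$ to produce integrable tails, and then convert these $L^q$ bounds into weighted $L_w^p$ control via the reverse-H\"older inequality for $w\in A_{\infty,R}$.

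The main obstacle will be the two genuinely bi-parameter features. First, the almost orthogonality must decouple into independent decay in $|j-j'|$ and $|k-k'|$, which forces the full strength of the bi-parameter cancellation hypothesis on $K$ (the iterated one-parameter kernel conditions together with the joint bump estimate). Second, in the atomic argument the exceptional set $\Omega$ is a general open set rather than a cube, so the classical ``enlarge the supporting cube'' device is unavailable; instead one must decompose $\Omega$ into its maximal dyadic subrectangles, work with the rectangle atoms $b_R$, and control the interaction of rectangles of very different eccentricities, which is exactly where the $A_{\infty,R}$ hypothesis and the strong maximal function $\mathcal M_s$ are indispensable.
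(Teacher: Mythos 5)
Your proposal is correct and matches the paper, which offers no proof of Theorem \ref{weighted} at all: the statement is imported verbatim as \cite[Theorem 1.2]{DHLW}, so citing that result directly, as you first suggest, is exactly what the paper does. One remark on your reconstruction sketch: while the square-function/almost-orthogonality argument for the $H_w^p\to H_w^p$ bound is indeed how \cite{DHLW} proceed, for the $H_w^p\to L_w^p$ bound they do not use atomic decomposition (precisely because of the Journ\'e-type difficulties with rectangle atoms that you flag); instead they deduce it from the $H_w^p\to H_w^p$ bound combined with the embedding $\|f\|_{L_w^p(\mathbb R^{n+m})}\le C\|f\|_{H_w^p(\mathbb R^n\times\mathbb R^m)}$ for $f\in H_w^p\cap L^2$, which the present paper records separately as Theorem \ref{Hardy}.
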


A direct corollary of Theorem \ref{s2th1} and Theorem \ref{weighted} is as follows.
\begin{corollary}
Suppose that $w\in {A_{\infty,R}}$ and $0<p<\infty$.
Then the bi-parameter singular integral operator $\tilde T$ is bounded from product Hardy space ${H}_w^p(\mathbb R^n\times\mathbb R^m)$ to $L_w^p(\mathbb R^n\times\mathbb R^m)$.
\end{corollary}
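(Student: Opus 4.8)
The plan is to obtain this corollary directly from the $A_\infty$ extrapolation theorem, Theorem~\ref{s2th1}, fed by the endpoint weighted bound of Theorem~\ref{weighted}. The essential point is that Theorem~\ref{weighted} supplies the $H_w^p\to L_w^p$ estimate only on the restricted range $0<p\le 1$, while extrapolation is exactly the mechanism that promotes one such weighted inequality to the full range $0<p<\infty$.

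First I would specialize the general framework to $X=L_w^p(\mathbb R^n\times\mathbb R^m)$. By Definition~\ref{s3d1} this gives $\|f\|_{H_w^p}=\|\mathcal G(f)\|_{L_w^p}$, hence
$$
\|f\|_{H_w^p}^p=\int_{\mathbb R^n\times\mathbb R^m}\mathcal G(f)(x,y)^p\,w(x,y)\,dx\,dy .
$$
This identity is what lets the Hardy quasi-norm enter the integral form required by the hypothesis of Theorem~\ref{s2th1}, with the square function $\mathcal G(f)$ playing the role of the controlling function $g$.

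Next I would fix a universal dense class $\mathcal D$, for instance the product test functions $\mathcal S_M(\mathbb R^n\times\mathbb R^m)$, and form the family of ordered pairs
$$
\mathcal F=\bigl\{\bigl(|\widetilde T f|,\ \mathcal G(f)\bigr):f\in\mathcal D\bigr\}.
$$
Applying Theorem~\ref{weighted} at the exponent $p_0=1$, which lies in the admissible range $0<p_0\le1$, yields for every $w_0\in A_{\infty,R}$ the estimate
$$
\int_{\mathbb R^n\times\mathbb R^m}|\widetilde T f|\,w_0\le C\int_{\mathbb R^n\times\mathbb R^m}\mathcal G(f)\,w_0 ,
$$
which is precisely hypothesis~(\ref{s2i1}) with $p_0=1$ for the pair $(|\widetilde T f|,\mathcal G(f))$. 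Invoking Theorem~\ref{s2th1} then delivers~(\ref{s2i2}): for all $0<p<\infty$ and all $w\in A_{\infty,R}$ one has $\int|\widetilde T f|^p w\le C\int\mathcal G(f)^p w$, that is, $\|\widetilde T f\|_{L_w^p}\le C\|f\|_{H_w^p}$ for every $f\in\mathcal D$. Since $L_w^p$ has an absolutely continuous quasi-norm when $w\in A_{\infty,R}$, the density argument of Theorem~\ref{dense} shows that $\mathcal S_M$ is dense in $H_w^p$, so this a priori estimate extends to all $f\in H_w^p$ by a routine limiting argument (using the $H_w^p\to H_w^p$ boundedness from Theorem~\ref{weighted} to identify the limit with $\widetilde T f$).

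I expect the principal difficulty to lie in the constant bookkeeping of the extrapolation step: one must verify that the constant produced by Theorem~\ref{weighted} is controlled solely in terms of the $A_{\infty,R}$ characteristic of $w_0$, so that hypothesis~(\ref{s2i1}) holds in the uniform-in-$w_0$ sense that Theorem~\ref{s2th1} actually requires, with the resulting conclusion constant depending only on $p$ and the $A_{\infty,R}$ characteristic of $w$. The density and limiting passage, though standard, must be recorded because the family $\mathcal F$ is built from the dense class $\mathcal D$ rather than from all of $H_w^p$.
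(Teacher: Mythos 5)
Your proposal is correct and follows essentially the same route as the paper, which presents this statement as a direct corollary of Theorem~\ref{s2th1} and Theorem~\ref{weighted}: the $H^{p_0}_{w_0}\to L^{p_0}_{w_0}$ bound of Theorem~\ref{weighted} for some $p_0\in(0,1]$ and every $w_0\in A_{\infty,R}$ verifies hypothesis~(\ref{s2i1}) for the pairs $(|\widetilde T f|,\mathcal G(f))$, and the $A_\infty$ extrapolation of Theorem~\ref{s2th1} then yields the full range $0<p<\infty$. Your additional bookkeeping about the dense class and the uniformity of constants in the $A_{\infty,R}$ characteristic merely makes explicit what the paper leaves implicit in calling this a ``direct corollary.''
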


Similarly to \cite[Theorem 4.6]{CMP}, we get the following extrapolation theorem, 
whose proof is a slight modification of the corresponding one of \cite[Theorem 4.6]{CMP}.
\begin{theorem}\label{A1}\quad 
Let $X(\mathbb R^n\times \mathbb R^m)$ be a ball quasi-Banach function space and $X'(\mathbb R^n\times \mathbb R^m)$ be the associate space.
Given a family $\mathcal F$, assume that (\ref{s2i1}) holds for some $p_0$ with $0<p_0<\infty$
and every weight $w\in A_{1,R}$. 
If there exists $q_0$, $p_0\le q_0<\infty$, such that $X^{\frac{1}{q_0}}(\mathbb R^n\times \mathbb R^m)$ is a ball Banach function space, and the strong Hardy--Littlewood maximal operator $\mathcal M_s$ is bounded on $(X^{\frac{1}{q_0}})'(\mathbb R^n\times \mathbb R^m)$, then for any $(f, g)\in \mathcal{F}$ and
$f \in X(\mathbb R^n\times \mathbb R^m)$, we have
\begin{align*}
\|f\|_{X(\mathbb R^n\times \mathbb R^m)}\leq C\|g\|_{X(\mathbb R^n\times \mathbb R^m)}.
\end{align*}
Furthermore, for all $0<p<\infty$, for every $p$, $\frac{p_0}{q_0}\le p<\infty$, then for any $(f, g)\in \mathcal{F}$ and
$f \in X(\mathbb R^n\times \mathbb R^m)$, we have
\begin{align*}
\|f\|_{X^p(\mathbb R^n\times \mathbb R^m)}\leq C\|g\|_{X^p(\mathbb R^n\times \mathbb R^m)}.
\end{align*}
\end{theorem}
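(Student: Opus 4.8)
The plan is to adapt the Rubio de Francia iteration algorithm to the product, ball-Banach-function-space setting, following \cite[Theorem 4.6]{CMP}. Fix $(f,g)\in\mathcal F$ with $f\in X(\mathbb R^n\times\mathbb R^m)$. Since $X^{1/q_0}(\mathbb R^n\times\mathbb R^m)$ is a ball Banach function space, the $q_0$-convexification identity gives $\|f\|_{X}^{q_0}=\big\||f|^{q_0}\big\|_{X^{1/q_0}}$, and the Lorentz--Luxemburg duality for ball Banach function spaces (identifying $X^{1/q_0}$ with its second associate space) yields
$$\big\||f|^{q_0}\big\|_{X^{1/q_0}}=\sup\Big\{\int_{\mathbb R^n\times\mathbb R^m}|f|^{q_0}h\,dz:\ 0\le h,\ \|h\|_{(X^{1/q_0})'}\le1\Big\}.$$
Thus it suffices to bound $\int_{\mathbb R^n\times\mathbb R^m}|f|^{q_0}h\,dz$ by $C\|g\|_X^{q_0}$ for each admissible $h$.

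Second, for a fixed admissible $h$ I would run the Rubio de Francia algorithm on the associate space $(X^{1/q_0})'$. Since $\mathcal M_s$ is bounded there, set
$$\mathcal R h:=\sum_{i=0}^\infty\frac{\mathcal M_s^{\,i}h}{2^{\,i}\,\|\mathcal M_s\|_{(X^{1/q_0})'\to(X^{1/q_0})'}^{\,i}},$$
where $\mathcal M_s^{\,0}$ is the identity. The three standard properties then follow: $h\le\mathcal R h$ pointwise; $\|\mathcal R h\|_{(X^{1/q_0})'}\le2\|h\|_{(X^{1/q_0})'}\le2$; and $\mathcal M_s(\mathcal R h)\le 2\|\mathcal M_s\|_{(X^{1/q_0})'\to(X^{1/q_0})'}\,\mathcal R h$, so that $\mathcal R h\in A_{1,R}$ with its $A_{1,R}$ constant bounded independently of $h$. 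Consequently $\int|f|^{q_0}h\le\int|f|^{q_0}\mathcal R h$, and once the weighted bound $\int|f|^{q_0}\mathcal R h\le C\int|g|^{q_0}\mathcal R h$ is available, Hölder's inequality for associate spaces finishes the estimate, since $\int|g|^{q_0}\mathcal R h\le\big\||g|^{q_0}\big\|_{X^{1/q_0}}\|\mathcal R h\|_{(X^{1/q_0})'}\le2\|g\|_X^{q_0}$. Taking the supremum over $h$ gives $\|f\|_X\le C\|g\|_X$.

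The remaining and most delicate point, which I expect to be the main obstacle, is to transfer the weighted hypothesis from the exponent $p_0$ to $q_0$ while staying within product $A_{1,R}$ weights: namely, to show that (\ref{s2i1}) forces $\int|f|^{q_0}w\le C\int|g|^{q_0}w$ for every $w\in A_{1,R}$ and every $q_0\ge p_0$. I would obtain this by a rescaling: applying the hypothesis to the family $\{(f^{p_0},g^{p_0}):(f,g)\in\mathcal F\}$ reduces the claim to a product Rubio de Francia extrapolation from base exponent $1$ up to the exponent $q_0/p_0\ge1$, which itself rests on the boundedness of $\mathcal M_s$ on $L^r_w(\mathbb R^n\times\mathbb R^m)$ for product weights (Proposition \ref{max}) together with the factorization theory of product $A_{p,R}$ weights from \cite{gr85}. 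This is the technical heart of the argument and the place where the product (strong-maximal) setting genuinely departs from the one-parameter case: the iteration, Jones-type factorization and reverse Hölder inequalities must all be replaced by their rectangular analogues, and the careful verification that $\mathcal M_s$ reproduces the $A_{1,R}$ condition is where the real work lies.

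Finally, for the scale of convexifications, fix $p$ with $p_0/q_0\le p<\infty$ and put $q:=pq_0$. Then $q\ge p_0$, the space $(X^p)^{1/q}=X^{1/q_0}$ is a ball Banach function space, and $\mathcal M_s$ is bounded on its associate space $(X^{1/q_0})'$ by hypothesis. Since $X^p(\mathbb R^n\times\mathbb R^m)$ is again a ball quasi-Banach function space and the weighted hypothesis (\ref{s2i1}) at $p_0$ is unchanged, the first part applied with $X$ replaced by $X^p$ and $q_0$ replaced by $q$ yields $\|f\|_{X^p}\le C\|g\|_{X^p}$, which completes the proof.
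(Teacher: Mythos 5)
Your proposal is correct and takes essentially the same route as the paper, whose own ``proof'' of Theorem \ref{A1} is simply to transplant the argument of \cite[Theorem 4.6]{CMP} to the product setting: duality through the second associate space of $X^{1/q_0}$, the Rubio de Francia iteration built from $\mathcal M_s$ on $(X^{1/q_0})'$ (whose output is an $A_{1,R}$ weight with uniformly bounded constant -- the same algorithm the paper later writes out in detail in the proof of Theorem \ref{Rsh}), the upward transfer of the weighted hypothesis from exponent $p_0$ to $q_0$, and the identity $(X^p)^{1/(pq_0)}=X^{1/q_0}$ for the ``Furthermore'' part. One minor remark: the transfer step you single out as the technical heart needs neither Jones-type factorization nor reverse H\"older inequalities for product weights; the standard dual Rubio de Francia algorithm (with $\mathcal M_v u := \mathcal M_s(uv)/v$) reduces it entirely to the $L^{r}_w$-boundedness of $\mathcal M_s$ for $w\in A_{r,R}$, i.e.\ to Proposition \ref{max}, so it is less delicate than you suggest.
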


As an corollary of Theorem \ref{s2th1} and Theorem \ref{A1}, we can extend the $A_\infty$ extrapolation theorem to the ball quasi Banach function spaces on product domains.

\begin{corollary}\label{Ainfty}\quad 
Let $X(\mathbb R^n\times \mathbb R^m)$ be a ball quasi-Banach function space and $X'(\mathbb R^n\times \mathbb R^m)$ be the associate space.
Given a family $\mathcal F$, assume that (\ref{s2i1}) holds for some $p_0$ with $0<p_0<\infty$
and every weight $w\in A_{\infty,R}$. 
If there exists $q_1$, $0<q_1<\infty$, such that $X^{\frac{1}{q_1}}(\mathbb R^n\times \mathbb R^m)$ is ball Banach function space, and the strong Hardy--Littlewood maximal operator $\mathcal M_s$ is bounded on $(X^{\frac{1}{q_1}})'(\mathbb R^n\times \mathbb R^m)$, then  for any $(f, g)\in \mathcal{F}$ and
$f \in X(\mathbb R^n\times \mathbb R^m)$, we have
\begin{align}\label{s2i4}
\|f\|_{X(\mathbb R^n\times \mathbb R^m)}\leq C\|g\|_{X(\mathbb R^n\times \mathbb R^m)}.
\end{align}
Furthermore, for all $0<q<\infty$, for all sequences $\{(f_j,g_j)\}_j\subset \mathcal F$,
\begin{align}\label{s2i3}
  \left\|\left(\sum_j|f_j|^q\right)^{1/q}\right\|_{X(\mathbb{R}^n\times \mathbb R^m)}\leq
  C\left\|\left(\sum_j|g_j|^q\right)^{1/q}\right\|_{X(\mathbb{R}^n\times \mathbb R^m)}.
\end{align}
\end{corollary}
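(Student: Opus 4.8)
The plan is to obtain Corollary \ref{Ainfty} as a formal concatenation of the two extrapolation results already in hand: the weighted $A_\infty$ extrapolation of Theorem \ref{s2th1} and the $A_1$-based extrapolation to ball quasi-Banach function spaces of Theorem \ref{A1}. The crucial observation is that our hypothesis is strictly stronger than the one required by Theorem \ref{A1}: we assume (\ref{s2i1}) for \emph{every} $w\in A_{\infty,R}$ rather than merely for every $w\in A_{1,R}$. This extra strength is exactly what lets us circumvent the exponent constraint $p_0\le q_0$ built into Theorem \ref{A1}, since the hypothesized $q_1$ carries no a priori relation to $p_0$.

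For the scalar estimate (\ref{s2i4}), I would first use Theorem \ref{s2th1} to relocate the exponent. Since (\ref{s2i1}) holds for some $p_0\in(0,\infty)$ and every $w\in A_{\infty,R}$, its conclusion (\ref{s2i2}) upgrades this to
$$
\int_{\mathbb R^n\times\mathbb R^m} f(x,y)^{p}\,w(x,y)\,dxdy\le C_0\int_{\mathbb R^n\times\mathbb R^m} g(x,y)^{p}\,w(x,y)\,dxdy
$$
for \emph{every} $p\in(0,\infty)$ and every $w\in A_{\infty,R}$. In particular, choosing $p=q_1$ and recalling $A_{1,R}\subset A_{\infty,R}$, the family $\mathcal F$ satisfies (\ref{s2i1}) with the new exponent $q_1$ for every $w\in A_{1,R}$. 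Now I apply Theorem \ref{A1} with this new starting exponent and with $q_0:=q_1$: the requirements that $X^{1/q_0}(\mathbb R^n\times\mathbb R^m)$ be a ball Banach function space and that $\mathcal M_s$ be bounded on $(X^{1/q_0})'(\mathbb R^n\times\mathbb R^m)$ are precisely our assumptions for $q_1$, while the constraint of Theorem \ref{A1} reads $q_1\le q_1$ and holds with equality. Theorem \ref{A1} then yields $\|f\|_{X(\mathbb R^n\times\mathbb R^m)}\le C\|g\|_{X(\mathbb R^n\times\mathbb R^m)}$, which is (\ref{s2i4}).

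For the vector-valued inequality, fix $q\in(0,\infty)$ and pass to the auxiliary family
$$
\widetilde{\mathcal F}:=\left\{\left(\Big(\sum_j|f_j|^q\Big)^{1/q},\ \Big(\sum_j|g_j|^q\Big)^{1/q}\right):\ \{(f_j,g_j)\}_j\subset\mathcal F\right\}.
$$
The vector-valued conclusion of Theorem \ref{s2th1} (taken with $r=q$), after raising both sides to the power $p$, states precisely that $\widetilde{\mathcal F}$ satisfies the scalar weighted inequality (\ref{s2i1}) for every $p\in(0,\infty)$ and every $w\in A_{\infty,R}$. Since the hypotheses on $X$ and on $q_1$ are unchanged, I would then apply the scalar conclusion (\ref{s2i4})—already established in the previous step—to the family $\widetilde{\mathcal F}$, which delivers the bound for $\big(\sum_j|f_j|^q\big)^{1/q}$ in terms of $\big(\sum_j|g_j|^q\big)^{1/q}$ in $X(\mathbb R^n\times\mathbb R^m)$, i.e.\ the desired vector-valued estimate.

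The only genuine point of care—and hence the main obstacle—is the exponent bookkeeping in the first step: one must verify that Theorem \ref{s2th1} truly permits moving $p_0$ to $q_1$ so that the constraint $p_0\le q_0$ of Theorem \ref{A1} can be met with $q_0=q_1$. This is exactly the place where the $A_{\infty,R}$ hypothesis (as opposed to only $A_{1,R}$) is indispensable; once the exponents are aligned, the remainder is a direct quotation of Theorems \ref{s2th1} and \ref{A1} with no additional analysis required.
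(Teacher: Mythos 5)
Your proposal is correct and follows essentially the same route as the paper, which states Corollary \ref{Ainfty} as a direct consequence of Theorem \ref{s2th1} and Theorem \ref{A1} without spelling out the details. Your exponent-shifting step (using the $A_{\infty,R}$ conclusion of Theorem \ref{s2th1} to restart the extrapolation at $p_0=q_1$ so that $q_0=q_1$ satisfies $p_0\le q_0$ in Theorem \ref{A1}) and the auxiliary-family argument for the vector-valued bound are precisely the intended, and correct, way to fill in that derivation.
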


Applying the similar argument to the proof of \cite[Theorem 10.1]{Cr}, we can get the desired $A_p$ extrapolation. Hence, we omit the details.
\begin{theorem}\label{Ap}\quad 
Let $X(\mathbb R^n\times \mathbb R^m)$ be a ball quasi-Banach function space, and $X'(\mathbb R^n\times \mathbb R^m)$ be the associate space of $X(\mathbb R^n\times \mathbb R^m)$. Suppose that the strong Hardy--Littlewood maximal operator $\mathcal M_s$ is bounded on $X(\mathbb R^n\times \mathbb R^m)$ and $X'(\mathbb R^n\times \mathbb R^m)$.
Given a family $\mathcal F$, assume that (\ref{s2i1}) holds for some $p_0$ with $1\le p_0<\infty$
and every weight $w\in A_{p_0,R}$. Then for any $(f, g)\in \mathcal{F}$ and
$f \in X(\mathbb R^n\times \mathbb R^m)$, we have
\begin{align}\label{s2i4}
\|f\|_{X(\mathbb R^n\times \mathbb R^m)}\leq C\|g\|_{X(\mathbb R^n\times \mathbb R^m)}.
\end{align}
Futhermore, for all $1<q<\infty$, for all sequences $\{(f_j,g_j)\}_j\subset \mathcal F$,
\begin{align}\label{s2i3}
  \left\|\left(\sum_j|f_j|^q\right)^{1/q}\right\|_{X(\mathbb{R}^n\times \mathbb R^m)}\leq
  C\left\|\left(\sum_j|g_j|^q\right)^{1/q}\right\|_{X(\mathbb{R}^n\times \mathbb R^m)}.
\end{align}
\end{theorem}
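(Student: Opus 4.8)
The plan is to run the Rubio de Francia extrapolation scheme adapted to the product setting, exactly as in the model \cite[Theorem 10.1]{Cr}, replacing the Hardy--Littlewood maximal operator by the strong maximal operator $\mathcal M_s$ and $A_p$ by $A_{p,R}$. The starting point is to build the two iteration (``Rubio de Francia'') operators associated with the boundedness of $\mathcal M_s$ on $X$ and on $X'$: for $0\le h$ set
$$\mathcal R h := \sum_{k=0}^\infty \frac{\mathcal M_s^k h}{2^k \|\mathcal M_s\|_{X\to X}^k}, \qquad \mathcal R' h := \sum_{k=0}^\infty \frac{\mathcal M_s^k h}{2^k \|\mathcal M_s\|_{X'\to X'}^k},$$
where $\mathcal M_s^k$ denotes the $k$-fold iterate. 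First I would record their three standard properties: $h\le \mathcal R h$ pointwise; $\|\mathcal R h\|_X\le 2\|h\|_X$ (and likewise for $\mathcal R'$ on $X'$); and $\mathcal M_s(\mathcal R h)\le 2\|\mathcal M_s\|_{X\to X}\,\mathcal R h$, which by the very definition of $A_{1,R}$ means $\mathcal R h\in A_{1,R}$ with a uniformly controlled constant (and similarly $\mathcal R' h\in A_{1,R}$).

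For the scalar conclusion I would argue by duality on the associate space: since $\mathcal M_s$ is bounded on $X$ and $X'$, the space $X$ is a ball Banach function space and $\|f\|_X\sim\sup\{\int fh:0\le h\in X',\ \|h\|_{X'}\le 1\}$. Fix such an $h$, put $G:=\mathcal R g$ and $H:=\mathcal R' h$, and form the candidate weight $w:=H\,G^{1-p_0}$. By the easy (product) direction of Jones factorization, the product of an $A_{1,R}$ weight and the $(1-p_0)$-power of another $A_{1,R}$ weight lies in $A_{p_0,R}$, so the hypothesis (\ref{s2i1}) applies to $w$. Then I would split by H\"older with exponents $p_0,p_0'$,
$$\int fh \le \Big(\int f^{p_0} w\Big)^{1/p_0}\Big(\int h^{p_0'} w^{1-p_0'}\Big)^{1/p_0'},$$
apply (\ref{s2i1}) to the first factor, and use the pointwise domination $g\le G$, $h\le H$ to collapse the weights: since $g^{p_0}G^{1-p_0}\le g$ and $h^{p_0'}H^{1-p_0'}\le h$, both factors reduce to associate-space pairings, giving $\int g^{p_0}w\le\int gH\le 2\|g\|_X$ and $\int h^{p_0'}w^{1-p_0'}\le\int hG\le 2\|g\|_X$. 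Multiplying and using $1/p_0+1/p_0'=1$ yields $\int fh\le C\|g\|_X$, and taking the supremum gives $\|f\|_X\le C\|g\|_X$. The case $p_0=1$ is the same computation but simpler: one skips H\"older and applies (\ref{s2i1}) directly to the $A_{1,R}$ weight $\mathcal R' h$.

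The vector-valued inequality I would obtain by a second, formal extrapolation. The scalar $A_{p_0,R}$ hypothesis classically upgrades to the weighted vector-valued bound $\|(\sum_j|f_j|^q)^{1/q}\|_{L^{p_0}_w}\le C\|(\sum_j|g_j|^q)^{1/q}\|_{L^{p_0}_w}$ for every $q\in(1,\infty)$ and $w\in A_{p_0,R}$, which is the product analog of the standard vector-valued extrapolation in \cite{CMP,Cr}. Hence the family $\widetilde{\mathcal F}:=\{((\sum_j|f_j|^q)^{1/q},(\sum_j|g_j|^q)^{1/q})\}$ again satisfies (\ref{s2i1}) with the same $p_0$, and applying the scalar conclusion already established to $\widetilde{\mathcal F}$ gives the claimed $\ell^q(X)$ inequality.

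The main obstacle I anticipate is not the duality bookkeeping but the weight-theoretic inputs in the product category: one must ensure that the Rubio de Francia iterate genuinely lands in $A_{1,R}$ for the strong maximal operator (which is exactly the content of the pointwise bound $\mathcal M_s(\mathcal R h)\lesssim \mathcal R h$), and that the product Jones factorization $A_{1,R}\cdot(A_{1,R})^{1-p_0}\subset A_{p_0,R}$ is available; both rest on the structure theory of product weights from \cite{gr85}. A secondary technical point, handled by the usual qualitative reduction, is to guarantee a priori finiteness of $\int f^{p_0}w$ so that (\ref{s2i1}) may legitimately be invoked; this is where one restricts to $f\in X$ and checks that the right-hand side is finite before running the estimate.
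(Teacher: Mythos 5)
Your proposal is correct and follows essentially the same route as the paper: the paper omits the proof of Theorem \ref{Ap}, deferring to the Rubio de Francia argument of \cite[Theorem 10.1]{Cr}, and your construction (the two iteration operators $\mathcal R$, $\mathcal R'$ built from $\mathcal M_s$ acting on $X$ and on $X'$, the easy direction of Jones factorization $A_{1,R}\cdot(A_{1,R})^{1-p_0}\subset A_{p_0,R}$ for the rectangle basis, H\"older plus associate-space duality, and a second formal extrapolation for the vector-valued bound) is exactly that argument transported to the product setting. The only point to tidy is that the duality $\|f\|_{X}\sim\sup\{\int |f|h\,:\,0\le h\in X',\ \|h\|_{X'}\le 1\}$ rests on the Lorentz--Luxemburg identity $X''=X$, which requires $X$ to be a ball Banach (not merely quasi-Banach) function space; this implicit strengthening is present in the paper's own statement and in \cite[Theorem 10.1]{Cr}, so it is not a defect of your argument.
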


To apply Theorem \ref{Ap} and Proposition \ref{max}, we can
get the following Fefferman--Stein vector-valued maximal inequality
on ball Banach function space.

\begin{corollary}\label{Fs1}\quad 
Let $X(\mathbb R^n\times \mathbb R^m)$ be a ball quasi-Banach function space, and $X'(\mathbb R^n\times \mathbb R^m)$ be the associate space of $X(\mathbb R^n\times \mathbb R^m)$. Suppose that the strong Hardy--Littlewood maximal operator $\mathcal M_s$ is bounded on $X(\mathbb R^n\times \mathbb R^m)$ and $X'(\mathbb R^n\times \mathbb R^m)$.
Then for all $1<q<\infty$, 
\begin{align*}
  \left\|\left(\sum_j|\mathcal M_s(f_j)|^q\right)^{1/q}\right\|_{X(\mathbb{R}^n\times \mathbb R^m)}\leq
  C\left\|\left(\sum_j|f_j|^q\right)^{1/q}\right\|_{X(\mathbb{R}^n\times \mathbb R^m)}.
\end{align*}
\end{corollary}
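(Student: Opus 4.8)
The plan is to obtain the inequality as an immediate consequence of the $A_p$ extrapolation theorem~\ref{Ap}, taking as the underlying family the graph of the strong maximal operator. Fix once and for all an exponent $p_0\in(1,\infty)$, say $p_0=2$, and set
$$
\mathcal F:=\left\{(\mathcal M_s h,\,|h|):h\in L^1_{\mathrm{loc}}(\mathbb R^{n}\times\mathbb R^m)\right\},
$$
a family of ordered pairs of nonnegative measurable functions on $\mathbb R^n\times\mathbb R^m$.

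First I would check that the weighted hypothesis~(\ref{s2i1}) of Theorem~\ref{Ap} holds for this $p_0$ and every $w\in A_{p_0,R}$. By Proposition~\ref{max}, for any such $w$ the operator $\mathcal M_s$ is bounded on $L^{p_0}_w(\mathbb R^n\times\mathbb R^m)$, so that
$$
\int_{\mathbb R^n\times\mathbb R^m}(\mathcal M_s h)^{p_0}\,w\,dz\le C_0\int_{\mathbb R^n\times\mathbb R^m}|h|^{p_0}\,w\,dz
$$
with $C_0$ depending only on $p_0$ and the $A_{p_0,R}$ constant of $w$. When the right-hand side is infinite the inequality is trivial, so it holds for every pair $(\mathcal M_s h,|h|)\in\mathcal F$ without any restriction on $h$; this is exactly~(\ref{s2i1}) for the pair $(f,g)=(\mathcal M_s h,|h|)$.

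Next I would apply Theorem~\ref{Ap} directly. Its standing hypotheses are precisely that $\mathcal M_s$ be bounded on $X(\mathbb R^n\times\mathbb R^m)$ and on its associate space $X'(\mathbb R^n\times\mathbb R^m)$, which we are given, together with the weighted estimate~(\ref{s2i1}) just verified. The vector-valued conclusion~(\ref{s2i3}) of that theorem then yields, for every $1<q<\infty$ and every sequence $\{h_j\}_j$,
$$
\left\|\left(\sum_j|\mathcal M_s h_j|^q\right)^{1/q}\right\|_{X(\mathbb R^n\times\mathbb R^m)}\le C\left\|\left(\sum_j|h_j|^q\right)^{1/q}\right\|_{X(\mathbb R^n\times\mathbb R^m)}.
$$
Relabelling $h_j$ as $f_j$ is exactly the assertion of the corollary.

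I do not expect a genuine obstacle here: the entire content is the correct choice of the family $\mathcal F$ and the observation that Proposition~\ref{max} supplies the weighted input~(\ref{s2i1}) uniformly in $w\in A_{p_0,R}$, after which everything is delivered by Theorem~\ref{Ap}. The only mild point to watch is that the extrapolation theorem be applied with an exponent $p_0\ge 1$ for which Proposition~\ref{max} is available, which is why I fix $p_0\in(1,\infty)$ rather than $p_0=1$; and, if one prefers to make the weighted estimate literally finite rather than invoking the trivial case, one may first restrict $\mathcal F$ to bounded compactly supported $h$ and then pass to the general case by truncation and the monotonicity~(iii) of Definition~\ref{de:03}.
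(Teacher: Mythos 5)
Your proposal is correct and follows exactly the paper's route: the paper derives Corollary \ref{Fs1} precisely by combining Proposition \ref{max} (which supplies the weighted input (\ref{s2i1}) for the family of pairs $(\mathcal M_s h, |h|)$ with $p_0\in(1,\infty)$ and $w\in A_{p_0,R}$) with the vector-valued conclusion (\ref{s2i3}) of the $A_p$ extrapolation Theorem \ref{Ap}. Your write-up merely makes explicit the verification that the paper leaves implicit, so there is nothing to correct.
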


The weighted norm inequality
for Littlewood-Paley square function $\mathcal G(f)$ on product Lebesgue spaces is obtained in \cite{DHLW}.
Let $1<p<\infty$, $w\in A_{p,R}$. Then there exist constants $C_1$ and
$C_2$ depending on $p$ such that
\begin{align*}
C\|f\|_{L_w^{p}}\le \|\mathcal G_d(f)\|_{L_w^{p}}\le C\|f\|_{L_w^{p}}.
\end{align*}

Thus, by applying Theorem \ref{Ap}, we conclude the following result.

\begin{theorem}
Let $X(\mathbb R^n\times \mathbb R^m)$ be a ball quasi-Banach function space, and $X(\mathbb R^n\times \mathbb R^m)$ be the associate space of $X(\mathbb R^n\times \mathbb R^m)$. Suppose that the strong Hardy--Littlewood maximal operator $\mathcal M_s$ is bounded on $X(\mathbb R^n\times \mathbb R^m)$ and $X'(\mathbb R^n\times \mathbb R^m)$. Then 
$$
{H}_X(\mathbb R^n\times\mathbb R^m)\sim X(\mathbb R^n\times\mathbb R^m)
$$
with equivalent norms.
\end{theorem}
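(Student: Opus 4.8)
The plan is to derive the norm equivalence as a direct consequence of the $A_p$ extrapolation theorem (Theorem \ref{Ap}), fed by the weighted Littlewood--Paley inequality recalled immediately above its statement. Since $\|f\|_{H_X(\mathbb R^n\times\mathbb R^m)}=\|\mathcal G(f)\|_{X(\mathbb R^{n+m})}$ by Definition \ref{s3d1}, the claim $H_X\sim X$ amounts to the two-sided bound $\|f\|_{X(\mathbb R^{n+m})}\sim\|\mathcal G(f)\|_{X(\mathbb R^{n+m})}$, so I would establish the two inequalities separately, each by one application of Theorem \ref{Ap}. Fix any $p_0\in(1,\infty)$; the weighted square-function estimate supplies, for every $w\in A_{p_0,R}$, both halves $\|f\|_{L_w^{p_0}}\le C\|\mathcal G(f)\|_{L_w^{p_0}}$ and $\|\mathcal G(f)\|_{L_w^{p_0}}\le C\|f\|_{L_w^{p_0}}$, which are exactly the seed hypotheses (\ref{s2i1}) required for the two directions.

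First I would treat the lower bound by running Theorem \ref{Ap} on the family $\mathcal F_1:=\{(|f|,\mathcal G(f))\}$, whose seed inequality $\int|f|^{p_0}w\le C\int\mathcal G(f)^{p_0}w$ is the first half above. Because $\mathcal M_s$ is assumed bounded on both $X$ and its associate space $X'$, the hypotheses of Theorem \ref{Ap} are met, and its conclusion yields $\|f\|_{X}\le C\|\mathcal G(f)\|_{X}=C\|f\|_{H_X}$. Symmetrically, applying Theorem \ref{Ap} to $\mathcal F_2:=\{(\mathcal G(f),|f|)\}$ with seed $\int\mathcal G(f)^{p_0}w\le C\int|f|^{p_0}w$ produces $\|f\|_{H_X}=\|\mathcal G(f)\|_{X}\le C\|f\|_{X}$. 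Combining the two gives $\|f\|_{H_X}\sim\|f\|_X$, which is the asserted equivalence.

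The main obstacle is not the extrapolation step, which is purely formal once the families are in place, but two matching issues surrounding the seed estimate. First, the weighted inequality is stated for the discrete square function $\mathcal G_d$, whereas $H_X$ is defined through the continuous $\mathcal G$; to align them I would invoke the Plancherel--P\'olya equivalences of Theorem \ref{Planc} and its corollary, which guarantee $\|\mathcal G(f)\|_X\sim\|\mathcal G_d(f)\|_X$ and let me use either square function interchangeably. Second, and more delicate, Theorem \ref{Ap} delivers the norm inequality only for pairs whose first entry already lies in $X$, so the genuine content of the lower bound is the membership $f\in X$ for $f\in H_X$. Here I would either verify the seed for all admissible pairs directly, noting that the estimate is vacuous whenever the right-hand $L_w^{p_0}$-norm is infinite and otherwise follows from the weighted characterization together with the discrete Calder\'on reproducing formula of Proposition \ref{DCRF}, which reconstructs $f$ from its square-function data, or else first establish the equivalence on a convenient dense subclass and then pass to the limit. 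This membership bookkeeping, rather than any new inequality, is where the care is required.
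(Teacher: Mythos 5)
Your proposal is correct and follows essentially the same route as the paper: the paper's entire proof consists of citing the two-sided weighted Littlewood--Paley inequality $\|f\|_{L_w^{p}}\sim\|\mathcal G_d(f)\|_{L_w^{p}}$ for $w\in A_{p,R}$ from \cite{DHLW} and then invoking the $A_p$ extrapolation theorem (Theorem \ref{Ap}) under the hypothesis that $\mathcal M_s$ is bounded on $X$ and $X'$, exactly as you do with your families $\mathcal F_1$ and $\mathcal F_2$. Your additional remarks on reconciling $\mathcal G$ with $\mathcal G_d$ via the Plancherel--P\'olya equivalence and on the membership bookkeeping are details the paper leaves implicit, so they strengthen rather than deviate from its argument.
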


We state the following result on weighted product Hardy spaces and weighted Lebesgue spaces in \cite[Theorem 3.5]{DHLW}.
\begin{theorem}\label{Hardy}
Suppose that $w\in A_{\infty, R}$.
If $f\in{H}_w^p(\mathbb R^n\times\mathbb R^m)\cap L^2(\mathbb R^{n+m})$
with $0<p\le 1$, then $f\in L_w^p(\mathbb R^{n+m})$ with
$$
\|f\|_{L_w^p(\mathbb R^{n+m})}\le C\|f\|_{H_w^p(\mathbb R^n\times\mathbb R^m)}.
$$
\end{theorem}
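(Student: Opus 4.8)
The plan is to deduce the inequality from the square-function description of $H_w^p(\mathbb R^n\times\mathbb R^m)$ together with a Calderón--Zygmund level-set decomposition adapted to the two-parameter setting. First I would invoke the discrete Littlewood--Paley characterization, so that $\|f\|_{H_w^p(\mathbb R^n\times\mathbb R^m)}\sim\|\mathcal G^d(f)\|_{L_w^p(\mathbb R^{n+m})}$ and it suffices to bound $\|f\|_{L_w^p}$ by $\|\mathcal G^d(f)\|_{L_w^p}$. Since $f\in L^2(\mathbb R^{n+m})$, Proposition \ref{DCRF} lets me expand $f$ via the discrete Calderón reproducing formula, and I would regroup the dyadic rectangles exactly as in the proof of Theorem \ref{Decom}: set $\Omega_\ell=\{\mathcal G^d(f)>2^\ell\}$ and $\tilde\Omega_\ell=\{\mathcal M_s(\chi_{\Omega_\ell})>1/1000\}$, collect into $B_\ell$ the rectangles $R=I\times J$ with $|R\cap\Omega_\ell|>|R|/2$ and $|R\cap\Omega_{\ell+1}|\le|R|/2$, and write $f=\sum_\ell g_\ell$, where $g_\ell$ collects the terms of the reproducing formula indexed by $R\in B_\ell$. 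Each $g_\ell$ is then supported in $\tilde\Omega_\ell$.

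Because $0<p\le1$ the functional $\int|\cdot|^pw$ is subadditive, so $\int_{\mathbb R^{n+m}}|f|^pw\le\sum_\ell\int_{\tilde\Omega_\ell}|g_\ell|^pw$. For each fixed $\ell$ I would choose $r_0\in(1,\infty)$ with $w\in A_{r_0,R}$, which is possible since $w\in A_{\infty,R}=\bigcup_{1\le r<\infty}A_{r,R}$, and apply Hölder's inequality on $\tilde\Omega_\ell$ with exponents $r_0/p$ and $(r_0/p)'$ to obtain
\begin{align*}
\int_{\tilde\Omega_\ell}|g_\ell|^pw\le\|g_\ell\|_{L_w^{r_0}(\mathbb R^{n+m})}^{p}\,w(\tilde\Omega_\ell)^{1-p/r_0}.
\end{align*}
The weighted Littlewood--Paley inequality for $w\in A_{r_0,R}$ gives $\|g_\ell\|_{L_w^{r_0}}\le C\|\mathcal G^d(g_\ell)\|_{L_w^{r_0}}$, and an almost-orthogonality estimate of the type used in Theorem \ref{Planc}, together with the defining property $|R\cap\Omega_{\ell+1}|\le|R|/2$ of the rectangles in $B_\ell$ (which forces the coefficients $\varphi_{j,k}*f(x_I,y_J)$ to be, on average, of size $\lesssim2^\ell$), yields $\|\mathcal G^d(g_\ell)\|_{L_w^{r_0}}\le C2^\ell w(\tilde\Omega_\ell)^{1/r_0}$. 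Combining the two displays gives $\int_{\tilde\Omega_\ell}|g_\ell|^pw\le C2^{\ell p}w(\tilde\Omega_\ell)$, and the $A_\infty$ property of $w$ applied to the strong maximal function forces $w(\tilde\Omega_\ell)\le Cw(\Omega_\ell)$.

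Summing over $\ell$ and using the layer-cake comparison $\sum_\ell2^{\ell p}w(\Omega_\ell)\sim\int_{\mathbb R^{n+m}}\mathcal G^d(f)^pw$, valid since $p>0$ makes the geometric series converge, I arrive at $\int|f|^pw\le C\|\mathcal G^d(f)\|_{L_w^p}^p\sim\|f\|_{H_w^p}^p$, which is the claim. The main obstacle is the weighted estimate $\|\mathcal G^d(g_\ell)\|_{L_w^{r_0}}\le C2^\ell w(\tilde\Omega_\ell)^{1/r_0}$: it requires the almost-orthogonality bounds for $\psi_{j,k}*\tilde\varphi_{j',k'}$ to survive the passage to a general $A_{r_0,R}$ weight, and the comparison $w(\tilde\Omega_\ell)\le Cw(\Omega_\ell)$ is delicate in the product setting because the enlargement uses the strong maximal function, so one must invoke a Journé-type covering argument rather than the classical one-parameter $A_\infty$ inequality.
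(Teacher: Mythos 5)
You should first be aware that the paper does not prove Theorem \ref{Hardy} at all: it is imported verbatim from \cite[Theorem 3.5]{DHLW}, so there is no internal proof to compare against. What you have written is, in outline, a reconstruction of the proof of that cited result (and of its unweighted predecessor in \cite{HLZ}): group the terms of a discrete Calder\'on identity according to the level sets $\Omega_\ell$ of the discrete square function, use $p$-subadditivity, apply H\"older's inequality on the enlarged sets $\tilde\Omega_\ell$, control $\|g_\ell\|_{L_w^{r_0}}$ by the weighted Littlewood--Paley inequality plus almost orthogonality and the property $|R\cap\Omega_{\ell+1}|\le|R|/2$, and resum by the layer-cake formula. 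That strategy is the right one, and most of your steps are sound. In particular, your closing worry about needing a Journ\'e-type covering argument for $w(\tilde\Omega_\ell)\le Cw(\Omega_\ell)$ is unfounded: since $\tilde\Omega_\ell=\{\mathcal M_s(\chi_{\Omega_\ell})>1/1000\}$, Chebyshev's inequality together with the $L_w^q$-boundedness of $\mathcal M_s$ (Proposition \ref{max}, for any $q$ with $w\in A_{q,R}$) gives $w(\tilde\Omega_\ell)\le C\,w(\Omega_\ell)$ directly.

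There is, however, one genuine flaw. You expand $f$ via Proposition \ref{DCRF} and then assert that each $g_\ell$ is supported in $\tilde\Omega_\ell$. This is false for that expansion: the building blocks $\tilde\varphi_{j,k}(\cdot,\cdot,x_I,y_J)$ of Proposition \ref{DCRF} belong only to $\mathcal S_M(\mathbb R^n\times\mathbb R^m)$, so they decay but do not vanish outside a dilate of $R$, and consequently $g_\ell$ has unbounded support. This breaks precisely the step that produces the factor $w(\tilde\Omega_\ell)^{1-p/r_0}$, namely H\"older's inequality over the set $\tilde\Omega_\ell$, and without that factor the sum over $\ell$ cannot be closed. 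The repair is the device used in the paper's own proof of Theorem \ref{Decom}: replace Proposition \ref{DCRF} by the discrete Calder\'on identity of Theorem \ref{DTCI} (available here because $f\in L^2(\mathbb R^{n+m})$), whose building blocks $\phi_{j,k}$ are dilates of $C_c^\infty$ functions, so that every term indexed by $R\in B_\ell$ is genuinely supported in $3R\subset\tilde\Omega_\ell$. Correspondingly, the coefficients become $\phi_{j,k}\ast(T_N^{-1}f)(x_I,y_J)$ and the level sets $\Omega_\ell$ must be taken for the square function $\tilde g(f)$ built from these coefficients, together with the weighted analogue of the equivalence $\|\tilde g(f)\|_{L_w^p}\sim\|f\|_{H_w^p}$ established at the start of the proof of Theorem \ref{Decom} (via the Plancherel--P\'olya argument of Theorem \ref{Planc}). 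With that substitution, your argument goes through and essentially recovers the proof of \cite[Theorem 3.5]{DHLW}.
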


Therefore, Theorems \ref{Ainfty} and Theorem \ref{Hardy} yield the following result.
\begin{theorem}\label{s4t2}
Let $X(\mathbb R^n\times \mathbb R^m)$ be a ball quasi-Banach function space satisfying Assumption \ref{as:01} with some
$0<\theta<s\le1$ and $X'(\mathbb R^n\times \mathbb R^m)$ be the associate space.
If there exists $q_1$, $0<q_1<\infty$, such that $X^{\frac{1}{q_1}}(\mathbb R^n\times \mathbb R^m)$ is ball quasi-Banach function space, and the strong Hardy--Littlewood maximal operator $\mathcal M_s$ is bounded on $(X^{\frac{1}{q_1}})'(\mathbb R^n\times \mathbb R^m)$.
If $f\in{H}_X(\mathbb R^n\times\mathbb R^m)\cap L^2(\mathbb R^{n+m})$, then $f\in X(\mathbb R^n\times\mathbb R^m)$ with
$$
\|f\|_{X(\mathbb R^n\times\mathbb R^m)}\le C\|f\|_{H_X(\mathbb R^n\times\mathbb R^m)}.
$$
\end{theorem}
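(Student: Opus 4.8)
The plan is to obtain the asserted embedding as a direct consequence of the $A_\infty$ extrapolation on product ball quasi-Banach function spaces (Corollary \ref{Ainfty}), into which I feed the weighted comparison between the product Hardy space and the weighted Lebesgue space furnished by Theorem \ref{Hardy}. First recall that, by Definition \ref{s3d1}, one has $\|f\|_{H_X(\mathbb R^n\times\mathbb R^m)}=\|\mathcal G(f)\|_{X(\mathbb R^{n+m})}$, and that the weighted product Hardy norm enjoys the analogous square-function description $\|f\|_{H_w^{p}(\mathbb R^n\times\mathbb R^m)}=\|\mathcal G(f)\|_{L_w^{p}(\mathbb R^{n+m})}$ coming from the weighted Littlewood--Paley--Stein theory of \cite{DHLW}. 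Consequently the whole matter reduces to proving the pointwise-to-norm comparison $\|f\|_{X(\mathbb R^{n+m})}\le C\|\mathcal G(f)\|_{X(\mathbb R^{n+m})}$.

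To set up the extrapolation, I would fix an exponent $p_0\in(0,1]$ and let $\mathcal F$ be the family of all ordered pairs $(|f|,\mathcal G(f))$ with $f\in H_X(\mathbb R^n\times\mathbb R^m)\cap L^2(\mathbb R^{n+m})$. For any weight $w\in A_{\infty,R}$, Theorem \ref{Hardy}, applied with exponent $p_0$, gives
\begin{align*}
\int_{\mathbb R^n\times\mathbb R^m}|f(x,y)|^{p_0}w(x,y)\,dxdy
\le C\int_{\mathbb R^n\times\mathbb R^m}\big(\mathcal G(f)(x,y)\big)^{p_0}w(x,y)\,dxdy,
\end{align*}
which is exactly hypothesis (\ref{s2i1}) for the family $\mathcal F$ with this value of $p_0$. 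The membership $f\in L^2(\mathbb R^{n+m})$ is precisely what legitimizes the invocation of Theorem \ref{Hardy}, whose hypothesis is $f\in H_w^{p_0}(\mathbb R^n\times\mathbb R^m)\cap L^2(\mathbb R^{n+m})$.

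With the weighted hypothesis in hand, the structural assumptions imposed on $X$ in the statement, namely the existence of $q_1\in(0,\infty)$ with $X^{1/q_1}(\mathbb R^n\times\mathbb R^m)$ a ball Banach function space on whose associate space $(X^{1/q_1})'(\mathbb R^n\times\mathbb R^m)$ the strong maximal operator $\mathcal M_s$ is bounded, are exactly those required by Corollary \ref{Ainfty}. Applying Corollary \ref{Ainfty} to the family $\mathcal F$ therefore yields, for each admissible $f$,
\begin{align*}
\|f\|_{X(\mathbb R^{n+m})}\le C\|\mathcal G(f)\|_{X(\mathbb R^{n+m})}=C\|f\|_{H_X(\mathbb R^n\times\mathbb R^m)},
\end{align*}
which is the desired conclusion.

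The step needing the most care is not the extrapolation itself, which is a verbatim quotation of Corollary \ref{Ainfty}, but rather two technical points surrounding it. The first is that the constant $C$ in Theorem \ref{Hardy} must depend on $w$ only through its $A_{\infty,R}$ characteristic, so that the extrapolation machinery is genuinely applicable to the whole family $\mathcal F$ uniformly in $w$; this is built into the quantitative form of the weighted theory of \cite{DHLW}. The second is the a priori finiteness of $\|f\|_{X(\mathbb R^{n+m})}$, since Corollary \ref{Ainfty} delivers its norm estimate under the standing assumption $f\in X(\mathbb R^n\times\mathbb R^m)$. I would dispose of this by a truncation argument exploiting the $L^2$ hypothesis: the truncations $f_N:=f\chi_{\{|f|\le N\}\cap B(0,N)}$ are bounded with compact support and hence lie in $X(\mathbb R^n\times\mathbb R^m)$, because $\chi_{B(0,N)}\in X$ by Definition \ref{de:03}(iv) and the lattice property Definition \ref{de:03}(ii) dominates $|f_N|$ by a constant multiple of $\chi_{B(0,N)}$; running the extrapolation estimate at the truncated level and then invoking the Fatou-type property Definition \ref{de:03}(iii) as $N\to\infty$ simultaneously secures $f\in X$ and promotes the uniform bound to $f$ itself. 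This completes the proof of Theorem \ref{s4t2}.
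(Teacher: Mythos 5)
Your argument is correct and coincides with the paper's own proof, which consists of the single remark that Corollary \ref{Ainfty} and Theorem \ref{Hardy} together yield Theorem \ref{s4t2}: exactly as you do, the weighted inequality of Theorem \ref{Hardy} (applicable because $f\in L^2(\mathbb R^{n+m})$, and vacuous when the right-hand side is infinite) verifies hypothesis (\ref{s2i1}) for the family of pairs $(|f|,\mathcal G(f))$, and the extrapolation then gives the conclusion. Your handling of the a priori requirement $f\in X(\mathbb R^n\times \mathbb R^m)$ in Corollary \ref{Ainfty} is a genuine refinement that the paper silently skips; just make explicit that the truncations should be paired as $(|f_N|,\mathcal G(f))$ rather than $(|f_N|,\mathcal G(f_N))$, since $|f_N|\le |f|$ makes the weighted hypothesis for such pairs immediate from that for $(|f|,\mathcal G(f))$, whereas $\mathcal G(f_N)$ need not be controlled by $\mathcal G(f)$.
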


Finally, we explore a general approach to derive $H_X\rightarrow X$ boundedness
from $H_X\rightarrow H_X$ boundedness of linear operators. 

\begin{theorem}\label{s4t3} 
Let $X(\mathbb R^n\times \mathbb R^m)$ be a ball quasi-Banach function space satisfying Assumption \ref{as:01} with some
$0<\theta<s\le1$ and $X'(\mathbb R^n\times \mathbb R^m)$ be the associate space.
If there exists $q_1$, $0<q_1<\infty$, such that $X^{\frac{1}{q_1}}(\mathbb R^n\times \mathbb R^m)$ is ball quasi-Banach function space, and the strong Hardy--Littlewood maximal operator $\mathcal M_s$ is bounded on $(X^{\frac{1}{q_1}})'(\mathbb R^n\times \mathbb R^m)$.
Assume further that $X(\mathbb R^n\times\mathbb R^m)$ has an absolutely
continuous quasi-norm and $2\leq q<\infty$.
Then any linear operator $T$ which is bounded both on $L^2\left(\mathbb{R}^{n+m}\right)$ and $H_X\left(\mathbb{R}^n \times \mathbb{R}^m\right)$, is bounded from $H_X\left(\mathbb{R}^n \times\mathbb{R}^m\right)$ to $X\left(\mathbb{R}^{n+m}\right)$.
\end{theorem}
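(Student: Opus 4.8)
The plan is to combine the embedding $H_X\cap L^2\hookrightarrow X$ furnished by Theorem~\ref{s4t2} with a density argument, so that the bulk of the analytic work is already done. First I would observe that the hypotheses imposed here are verbatim those of Theorem~\ref{s4t2}: $X$ is a ball quasi-Banach function space satisfying Assumption~\ref{as:01} with $0<\theta<s\le1$, and there is $q_1\in(0,\infty)$ for which $X^{1/q_1}$ is a ball quasi-Banach function space with $\mathcal M_s$ bounded on $(X^{1/q_1})'$. Consequently, for every $g\in H_X(\mathbb R^n\times\mathbb R^m)\cap L^2(\mathbb R^{n+m})$ we have $g\in X(\mathbb R^{n+m})$ together with the quantitative estimate $\|g\|_{X(\mathbb R^{n+m})}\le C\|g\|_{H_X(\mathbb R^n\times\mathbb R^m)}$.

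Next I would establish the target inequality on the dense subspace $H_X\cap L^2$. Fix $f\in H_X(\mathbb R^n\times\mathbb R^m)\cap L^2(\mathbb R^{n+m})$. Since $T$ is bounded on $L^2(\mathbb R^{n+m})$, we get $Tf\in L^2(\mathbb R^{n+m})$; since $T$ is bounded on $H_X(\mathbb R^n\times\mathbb R^m)$, we get $Tf\in H_X(\mathbb R^n\times\mathbb R^m)$ with $\|Tf\|_{H_X}\le C\|f\|_{H_X}$. Hence $Tf\in H_X\cap L^2$, and feeding $Tf$ into the embedding from Theorem~\ref{s4t2} yields $\|Tf\|_{X(\mathbb R^{n+m})}\le C\|Tf\|_{H_X}\le C\|f\|_{H_X}$, which is exactly the desired bound for $f$ in the dense class.

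Finally I would pass to general $f\in H_X(\mathbb R^n\times\mathbb R^m)$. By the corollary following Theorem~\ref{dense}, $H_X\cap L^2$ is dense in $H_X$, so I choose $f_j\in H_X\cap L^2$ with $f_j\to f$ in $H_X$. Linearity and the estimate just proved give $\|Tf_i-Tf_j\|_{X}=\|T(f_i-f_j)\|_{X}\le C\|f_i-f_j\|_{H_X}$, so $\{Tf_j\}$ is Cauchy in $X$; completeness of the ball quasi-Banach function space $X$ (a consequence of the Fatou property in Definition~\ref{de:03}) produces a limit $g\in X$, and continuity of the quasi-norm gives $\|g\|_X=\lim_j\|Tf_j\|_X\le C\lim_j\|f_j\|_{H_X}=C\|f\|_{H_X}$. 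On the other hand, $Tf_j\to Tf$ in $H_X$ by boundedness of $T$ on $H_X$. To identify $g=Tf$, I would note that both forms of convergence descend to convergence in $(\mathcal S_M)'(\mathbb R^n\times\mathbb R^m)$: the $X$-convergence does so through the local integrability built into the ball-function-space axioms, while the $H_X$-convergence does so because the $H_X$-quasi-norm controls the pairing against elements of $\mathcal S_M$. By uniqueness of distributional limits the two coincide, so $Tf=g\in X$ with $\|Tf\|_X\le C\|f\|_{H_X}$, completing the proof.

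I expect the main obstacle to lie in this last limit-identification step. Because the quasi-norm of $X$ has no explicit formula, one must argue abstractly that convergence in $X$ and convergence in $H_X$ both factor through the common ambient space $(\mathcal S_M)'$, so that the $X$-limit of $\{Tf_j\}$ and its $H_X$-limit $Tf$ are genuinely the same distribution; the compatibility of these two topologies is the only delicate point, since the remaining analytic content has been absorbed into Theorem~\ref{s4t2}.
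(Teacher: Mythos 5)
Your proposal is correct and takes essentially the same route as the paper: the paper's proof likewise applies Theorem~\ref{s4t2} to $Tf$ for $f\in H_X(\mathbb R^n\times\mathbb R^m)\cap L^2(\mathbb R^{n+m})$, obtaining $\|Tf\|_{X}\le C\|Tf\|_{H_X}\le C\|f\|_{H_X}$, and then concludes with the words ``by using a density argument.'' Your explicit treatment of that density step --- Cauchyness of $\{Tf_j\}$ in $X$, completeness via the Fatou property, and identification of the $X$-limit with $Tf$ through convergence in $(\mathcal S_M)'(\mathbb R^n\times\mathbb R^m)$ --- simply fills in details the paper leaves implicit.
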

\begin{proof}
For $f \in L^2\left(\mathbb{R}^{n+m}\right) \cap H_X\left(\mathbb{R}^n \times \mathbb{R}^m\right)$, by applying Theorem \ref{s4t2} we conclude that
$$
\|T(f)\|_{X\left(\mathbb{R}^{n+m}\right)} \leq C\|T(f)\|_{H_X\left(\mathbb{R}^n \times \mathbb{R}^m\right)} \leq C\|f\|_{H_X\left(\mathbb{R}^n \times \mathbb{R}^m\right)} .
$$
Therefore, by using a density argument, we have completed the proof of Theorem \ref{s4t3}.
\end{proof}

\section{Boundedness of bi-parameter singular integrals}
In this section, we aim to obtain the ${H}_X(\mathbb R^n\times\mathbb R^m)$ boundedness of the bi-parameter singular integral operator $T$ via extrapolation. First we will explore a general method to derive 
${H}_X(\mathbb R^n\times\mathbb R^m)\rightarrow X(\mathbb R^n\times\mathbb R^m)$ boundedness of the general operators $T$ via extrapolation. As an application, we conclude that the bi-parameter singular integral operator $T$ are bounded from ${H}_X(\mathbb R^n\times\mathbb R^m)$ to itself and from ${H}_X(\mathbb R^n\times\mathbb R^m)$ to $X(\mathbb R^n\times\mathbb R^m)$.

In order to obtain the mapping properties on some concrete function spaces, we usually need a dense subset of these spaces.
 By applying the following refined result, we will find that the density argument is not required. 
Therefore, we can get rid of the the additional assumption of absolute continuity of the quasi-norm for $X$, which means that our method can be applied to more general function spaces. Moreover, we do not require that the operator $T$ is any linear or sublinear.
Therefore, the following result also applies to general nonlinear operators.
We remark that some extensions of the theory of extrapolations and applications on $\mathbb R^n$ can be found in \cite[Theorem 3.10]{Ho2} , \cite[Theorem 3]{Ho22}, \cite[Theorem 5.8]{Tan23} and  \cite[Proposition 2.14]{TYYZ}.

\begin{theorem}\label{Rsh}\quad 
Let $q_0\in (0,\infty)$, $X(\mathbb R^n\times \mathbb R^m)$ be a ball quasi-Banach function space and $X'(\mathbb R^n\times \mathbb R^m)$ be the associate space.
Given a family $\mathcal F$, assume that the operator $T$ fulfills that
\begin{align*}
\int_{\mathbb{R}^n\times \mathbb R^m} Tf(x,y)^{q_0}w_0(x,y)dxdy\leq C_0
\int_{\mathbb{R}^n\times \mathbb R^m} f(x,y)^{q_0}w_0(x,y)dxdy,\quad (Tf, f)\in \mathcal{F}
\end{align*} 
holds for every weight 
$$w_0\in \left\{\mathcal R_sh:h\in (X^{{\frac{1}{q_0}}})'(\mathbb R^n\times \mathbb R^m)\right\}.$$
If $X^{\frac{1}{q_0}}(\mathbb R^n\times \mathbb R^m)$ is a ball Banach function space, and the strong Hardy--Littlewood maximal operator $\mathcal M_s$ is bounded on $(X^{\frac{1}{q_0}})'(\mathbb R^n\times \mathbb R^m)$, then for any $(Tf, f)\in \mathcal{F}$ and
$f \in X(\mathbb R^n\times \mathbb R^m)$, we have
\begin{align*}
\|Tf\|_{X(\mathbb R^n\times \mathbb R^m)}\leq C\|f\|_{X(\mathbb R^n\times \mathbb R^m)}.
\end{align*}
\end{theorem}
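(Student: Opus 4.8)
The plan is to run the Rubio de Francia extrapolation machinery in its dual form, thereby avoiding any density argument and any appeal to absolute continuity of the quasi-norm. The engine is the Rubio de Francia iteration operator
$$
\mathcal R_s h:=\sum_{k=0}^\infty \frac{\mathcal M_s^k h}{\big(2\,\|\mathcal M_s\|\big)^k},
$$
where $\mathcal M_s^k$ is the $k$-fold iterate of the strong maximal operator (with $\mathcal M_s^0 h=|h|$) and $\|\mathcal M_s\|$ denotes its operator norm on $(X^{1/q_0})'(\mathbb R^n\times\mathbb R^m)$, which is finite by hypothesis. First I would record the three standard properties valid for every $0\le h\in (X^{1/q_0})'$: (a) $h\le \mathcal R_s h$ pointwise; (b) $\|\mathcal R_s h\|_{(X^{1/q_0})'}\le 2\|h\|_{(X^{1/q_0})'}$, from summing the geometric series; and (c) $\mathcal M_s(\mathcal R_s h)\le 2\|\mathcal M_s\|\,\mathcal R_s h$ by sublinearity of $\mathcal M_s$, so that $\mathcal R_s h\in A_{1,R}$. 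In particular each $\mathcal R_s h$ lies in the weight class $\{\mathcal R_s h:h\in (X^{1/q_0})'\}$ for which the weighted hypothesis on $T$ is assumed, so here property (c) is not even needed.

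Next I would reduce the $X$-norm of $Tf$ to a dual pairing at the convexified level. Using the convexification identity $\|Tf\|_X^{q_0}=\big\||Tf|^{q_0}\big\|_{X^{1/q_0}}$ together with the hypothesis that $X^{1/q_0}$ is a ball Banach function space, the associate-space duality gives
$$
\|Tf\|_X^{q_0}=\big\||Tf|^{q_0}\big\|_{X^{1/q_0}}=\sup\left\{\int_{\mathbb R^n\times\mathbb R^m}|Tf(x,y)|^{q_0}\,h(x,y)\,dxdy:\ 0\le h\in (X^{1/q_0})',\ \|h\|_{(X^{1/q_0})'}\le 1\right\}.
$$
It then suffices to bound the integral for each admissible $h$.

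Fixing such an $h$, I would insert the self-improving weight and apply the hypothesis. By property (a) and then the assumed weighted inequality for the pair $(Tf,f)\in\mathcal F$ with weight $w_0=\mathcal R_s h$,
$$
\int_{\mathbb R^n\times\mathbb R^m}|Tf|^{q_0}\,h\,dxdy\le \int_{\mathbb R^n\times\mathbb R^m}|Tf|^{q_0}\,\mathcal R_s h\,dxdy\le C_0\int_{\mathbb R^n\times\mathbb R^m}|f|^{q_0}\,\mathcal R_s h\,dxdy.
$$
Finally the Hölder inequality for the pair $X^{1/q_0}$ and $(X^{1/q_0})'$, the identity $\big\||f|^{q_0}\big\|_{X^{1/q_0}}=\|f\|_X^{q_0}$, and property (b) yield
$$
\int_{\mathbb R^n\times\mathbb R^m}|f|^{q_0}\,\mathcal R_s h\,dxdy\le \big\||f|^{q_0}\big\|_{X^{1/q_0}}\,\|\mathcal R_s h\|_{(X^{1/q_0})'}\le 2\,\|f\|_X^{q_0}.
$$
Combining the three displays and taking the supremum over $h$ gives $\|Tf\|_X^{q_0}\le 2C_0\|f\|_X^{q_0}$, and taking $q_0$-th roots finishes the proof.

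I expect the genuinely delicate point to be the duality step, namely justifying $\|g\|_{X^{1/q_0}}=\sup\{\int|g|h:\|h\|_{(X^{1/q_0})'}\le1\}$ within the ball Banach function space framework. This rests on the biduality $X^{1/q_0}=(X^{1/q_0})''$ with comparable norms, which is supplied by the Fatou property built into the definition of ball (quasi-)Banach function spaces, but one must be careful to run the argument on the convexification $X^{1/q_0}$ rather than on $X$ itself, since $X$ is only quasi-Banach and need not enjoy a satisfactory duality. By contrast, the convergence of the series defining $\mathcal R_s$ and the estimates (a)--(c) are routine consequences of the boundedness of $\mathcal M_s$ on $(X^{1/q_0})'$, so the whole scheme hinges on that single boundedness hypothesis.
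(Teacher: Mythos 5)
Your proposal is correct and follows essentially the same route as the paper's own proof: the Rubio de Francia iteration $\mathcal R_s$ built from $\mathcal M_s$ on $(X^{1/q_0})'$, its three standard properties, the dual representation of $\|\cdot\|_{X^{1/q_0}}$ via the associate space, insertion of $\mathcal R_s h$ as the admissible weight, and the H\"older inequality with the convexification identity $\||f|^{q_0}\|_{X^{1/q_0}}=\|f\|_X^{q_0}$. Your explicit flagging of the biduality $X^{1/q_0}=(X^{1/q_0})''$ as the point requiring the Fatou property is a careful touch that the paper glosses over, but it does not change the argument.
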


\begin{proof}
Let $Y(\mathbb R^n\times \mathbb R^m)=X^{\frac{1}{q_0}}(\mathbb R^n\times \mathbb R^m)$. Since  $\mathcal M_s$ is bounded on $Y'(\mathbb R^n\times \mathbb R^m)$, then we can define the Rubio de Francia iteration algorithm for any $h\in L_{loc}(\mathbb R^n\times \mathbb R^m)$:
$$
\mathcal R_sh(x)=\sum_{k=0}^\infty\frac{\mathcal M^kh(x)}{2^k\|\mathcal M\|^k_{(Y'(\mathbb R^n\times \mathbb R^m)\rightarrow Y'(\mathbb R^n\times \mathbb R^m))}},
$$
where $\mathcal M^k$ is the iterations of the operator $\mathcal M$.
From the definition of $\mathcal R_s$, for any $h\in Y'(\mathbb R^n\times \mathbb R^m)$ we get that
\begin{align}\label{R1}
|h(x)|\le \mathcal R_sh(x), 
\end{align}
\begin{align}\label{R2}
\|\mathcal R_sh\|_{Y'(\mathbb R^n\times \mathbb R^m)}\le 2\|h\|_{Y'(\mathbb R^n\times \mathbb R^m)}, 
\end{align}
\begin{align}\label{R3}
[\mathcal R_s h]_{A_{1,R}}\le 2\|\mathcal M_s\|_{(Y'(\mathbb R^n\times \mathbb R^m)\rightarrow Y'(\mathbb R^n\times \mathbb R^m))}. 
\end{align}
In fact, $$
\mathcal M_s(\mathcal R_sh)\le \sum_{k=0}^\infty\frac{\mathcal M^{k+1}h}{2^k\|\mathcal M_s^k\|_{(Y'(\mathbb R^n\times \mathbb R^m)\rightarrow Y'(\mathbb R^n\times \mathbb R^m))}}
\le 2\|\mathcal M_s\|_{(Y'(\mathbb R^n\times \mathbb R^m)\rightarrow Y'(\mathbb R^n\times \mathbb R^m))}\mathcal R_sh
$$
and then $\mathcal R_sh\in A_{1,R}$.
Let $f\in X(\mathbb R^n\times\mathbb R^m)$, for any $h\in Y'(\mathbb R^n\times \mathbb R^m)$, we conclude that
\begin{align*}
&\int_{\mathbb R^n\times \mathbb R^m}|f(x,y)|^{q_0}\mathcal R_sh(x,y)dxdy
\le \| |f|^{q_0} \|_{Y(\mathbb R^n\times \mathbb R^m)}\|\mathcal R_sh\|_{Y'(\mathbb R^n\times \mathbb R^m)}\le\|f\|^{q_0}_{X(\mathbb R^n\times \mathbb R^m)}\|h\|_{Y'(\mathbb R^n\times \mathbb R^m)},
\end{align*}
where the last inequality deduces from (\ref{R2}).
Therefore, we have
$$
X(\mathbb R^n\times \mathbb R^m)\hookrightarrow \bigcap_{h\in Y'(\mathbb R^n\times \mathbb R^m)} L^{q_0}_{\mathcal R_sh}(\mathbb R^n\times \mathbb R^m).
$$

Notice that $\mathcal R_sh\in A_{1,R}\subset A_{A_{q_0,R}}$.
For any $h\in Y'(\mathbb R^n\times \mathbb R^m)$ and $(\ref{R1})$, 
by using the fact that $T$ is bounded on $L^{q_0}_{\mathcal R_sh}(\mathbb R^n\times \mathbb R^m)$, we obtain that
\begin{align*}
&\int_{\mathbb R^n\times \mathbb R^m}|Tf(x,y)|^{q_0}h(x,y)dxdy\\
&\le \int_{\mathbb R^n\times \mathbb R^m}|Tf(x,y)|^{q_0}\mathcal R_sh(x,y)dxdy\\
&\le \int_{\mathbb R^n\times \mathbb R^m}|f(x,y)|^{q_0}\mathcal R_sh(x,y)dxdy.
\end{align*}

Then by applying the H\"older inequality and (\ref{R2}), we deduce that
\begin{align*}
&\int_{\mathbb R^n\times \mathbb R^m}|Tf(x,y)|^{q_0}h(x,y)dxdy\le C\| |f|^{q_0} \|_{Y(\mathbb R^n\times \mathbb R^m)}\| \mathcal R_s h\|_{Y'(\mathbb R^n\times \mathbb R^m)}\le C\|f\|^{q_0}_{X(\mathbb R^n\times \mathbb R^m)}\|h\|_{Y'(\mathbb R^n\times \mathbb R^m)}
\end{align*}
for any $h\in Y'(\mathbb R^n\times \mathbb R^m)$.
We now prove the desired inequality.
 For any $(Tf, f)\in \mathcal{F}$ and
$f \in X(\mathbb R^n\times \mathbb R^m)$, since $Y$ is a ball Banach function space we have
\begin{align*}
&\|Tf\|^{q_0}_{X(\mathbb R^n\times \mathbb R^m)}
=\| |Tf|^{q_0}\|_{Y(\mathbb R^n\times \mathbb R^m)}\\
&=\sup \left\{\int_{\mathbb R^n\times \mathbb R^m} |Tf(x,y)|^{q_0}h(x,y)dxdy:\; h\in Y'(\mathbb R^n\times\mathbb R^m),\,\|h\|_{Y'(\mathbb R^n\times \mathbb R^m)}\le 1\right\}\\
&\le C\|f\|^{q_0}_{X(\mathbb R^n\times \mathbb R^m)}.
\end{align*}
Since the constant $C$ is independent of $h$, the proof is completed.
\end{proof}

Next we obtain the mapping properties for the product Hardy spaces ${H}_X(\mathbb R^n\times\mathbb R^m)$. A general method for get the boundedness of operators on the product Hardy spaces ${H}_X(\mathbb R^n\times\mathbb R^m)$ is as follows by applying Theorem \ref{Rsh}.

\begin{theorem}\label{general}\quad 
Let $q_0\in (0,\infty)$, $X(\mathbb R^n\times \mathbb R^m)$ be a ball quasi-Banach function space and $X'(\mathbb R^n\times \mathbb R^m)$ be the associate space.
Given a family $\mathcal F$, suppose that 
for every weight 
$$w_0\in \left\{\mathcal R_sh:h\in (X^{{\frac{1}{q_0}}})'(\mathbb R^n\times \mathbb R^m)\right\}$$
the operator $T:H^{q_0}_w\rightarrow L^{q_0}_w$ is bounded.
If $X^{\frac{1}{q_0}}(\mathbb R^n\times \mathbb R^m)$ is a ball Banach function space, and the strong Hardy--Littlewood maximal operator $\mathcal M_s$ is bounded on $(X^{\frac{1}{q_0}})'(\mathbb R^n\times \mathbb R^m)$, then for any $(Tf, f)\in \mathcal{F}$ and
$f \in H_X(\mathbb R^n\times \mathbb R^m)$, we have
\begin{align*}
\|Tf\|_{X(\mathbb R^n\times \mathbb R^m)}\leq C\|f\|_{H_X(\mathbb R^n\times \mathbb R^m)}.
\end{align*}
\end{theorem}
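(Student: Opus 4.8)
The plan is to mirror the proof of Theorem \ref{Rsh} almost verbatim, the only new ingredient being that the weighted self-boundedness on $L^{q_0}_w$ is there replaced by the weighted Hardy-to-Lebesgue boundedness $T\colon H^{q_0}_w\to L^{q_0}_w$, so that the object $\|Tf\|_X$ must now be paired against the \emph{square-function} description of $\|f\|_{H_X}$. First I would set $Y:=X^{1/q_0}$, which by hypothesis is a ball Banach function space whose associate space $Y'$ carries the bounded operator $\mathcal M_s$. As in Theorem \ref{Rsh}, this lets me form the Rubio de Francia iteration operator $\mathcal R_s$ and record its three defining properties: $|h|\le\mathcal R_sh$, $\|\mathcal R_sh\|_{Y'}\le2\|h\|_{Y'}$, and $[\mathcal R_sh]_{A_{1,R}}\le2\|\mathcal M_s\|_{Y'\to Y'}$. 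In particular $\mathcal R_sh\in A_{1,R}\subset A_{\infty,R}$, so every weight $w_0=\mathcal R_sh$ lies exactly in the family for which the hypothesized bound $\|T(\cdot)\|_{L^{q_0}_{w_0}}\le C\|\cdot\|_{H^{q_0}_{w_0}}$ is assumed available.

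The decisive step is to express everything through the product square function $\mathcal G$, using $\|f\|_{H_X}=\|\mathcal G(f)\|_X$ from Definition \ref{s3d1} together with the square-function description $\|f\|_{H^{q_0}_w}=\|\mathcal G(f)\|_{L^{q_0}_w}$ of the weighted product Hardy spaces for $w\in A_{\infty,R}$. Fixing $f\in H_X$ and $h\in Y'$, I would combine Hölder's inequality in the $Y$--$Y'$ duality, the convexification identity $\||\mathcal G(f)|^{q_0}\|_Y=\|\mathcal G(f)\|_X^{q_0}=\|f\|_{H_X}^{q_0}$, and the property $\|\mathcal R_sh\|_{Y'}\le2\|h\|_{Y'}$ to obtain
$$
\|f\|_{H^{q_0}_{\mathcal R_sh}}^{q_0}
=\int_{\mathbb R^n\times\mathbb R^m}|\mathcal G(f)|^{q_0}\,\mathcal R_sh
\le \big\||\mathcal G(f)|^{q_0}\big\|_{Y}\,\|\mathcal R_sh\|_{Y'}
\le 2\,\|f\|_{H_X}^{q_0}\,\|h\|_{Y'}.
$$
This is the Hardy-space analogue of the embedding $X\hookrightarrow\bigcap_h L^{q_0}_{\mathcal R_sh}$ used in Theorem \ref{Rsh}: it states $H_X\hookrightarrow\bigcap_h H^{q_0}_{\mathcal R_sh}$ with explicit constants.

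Next I would invoke the pointwise domination $|h|\le\mathcal R_sh$ and the hypothesized boundedness $T\colon H^{q_0}_{\mathcal R_sh}\to L^{q_0}_{\mathcal R_sh}$ (legitimate since $\mathcal R_sh$ is precisely a weight in the family for which $T$ is assumed bounded) to write, for every $h\in Y'$,
$$
\int_{\mathbb R^n\times\mathbb R^m}|Tf|^{q_0}\,h
\le \int_{\mathbb R^n\times\mathbb R^m}|Tf|^{q_0}\,\mathcal R_sh
= \|Tf\|_{L^{q_0}_{\mathcal R_sh}}^{q_0}
\le C\,\|f\|_{H^{q_0}_{\mathcal R_sh}}^{q_0}
\le C\,\|f\|_{H_X}^{q_0}\,\|h\|_{Y'}.
$$
Finally, since $Y$ is a ball Banach function space its norm is recovered by duality against $Y'$, and the convexification identity gives $\|Tf\|_X^{q_0}=\||Tf|^{q_0}\|_Y=\sup\{\int|Tf|^{q_0}h:\|h\|_{Y'}\le1\}$; taking the supremum over the unit ball of $Y'$ in the chain above yields $\|Tf\|_X^{q_0}\le C\|f\|_{H_X}^{q_0}$, and raising to the power $1/q_0$ completes the argument. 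I expect the only point demanding genuine care to be the transfer to square functions in the second paragraph, namely that $\int|\mathcal G(f)|^{q_0}\mathcal R_sh$ equals $\|f\|_{H^{q_0}_{\mathcal R_sh}}^{q_0}$ for the $A_{\infty,R}$ weight $\mathcal R_sh$ so that the hypothesis genuinely applies (including when $q_0\le1$), whereas the Rubio de Francia machinery and the concluding duality step are identical to those in Theorem \ref{Rsh}.
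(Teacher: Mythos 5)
Your proposal is correct and follows essentially the same route as the paper: the paper likewise establishes the embedding $H_X(\mathbb R^n\times\mathbb R^m)\hookrightarrow\bigcap_{h}H^{q_0}_{\mathcal R_sh}(\mathbb R^n\times\mathbb R^m)$ via the square-function characterization, invokes the hypothesized weighted bound for the weights $\mathcal R_sh$, and then concludes by applying Theorem \ref{Rsh} to the pair family $\{(|Tf|,\,\mathcal G_df)\}$. The only difference is presentational — you unfold the citation of Theorem \ref{Rsh} into the explicit Rubio de Francia and $Y$--$Y'$ duality argument, whereas the paper uses that theorem as a black box.
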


\begin{proof}
Let $f\in H_X(\mathbb R^n\times \mathbb R^m)$.
Then we know that 
$$\mathcal G_d(f)\in X(\mathbb R^n\times \mathbb R^m)\hookrightarrow \bigcap_{h\in (X^{{1/q_0}})'(\mathbb R^n\times \mathbb R^m),\, \|h\|_{(X^{1/{q_0}})'(\mathbb R^n\times \mathbb R^m)}\le 1} L^{q_0}_{\mathcal R_sh}(\mathbb R^n\times \mathbb R^m),$$
which implies that
$$
f\in \bigcap_{h\in (X^{{1/q_0}})'(\mathbb R^n\times \mathbb R^m),\, \|h\|_{(X^{1/{q_0}})'(\mathbb R^n\times \mathbb R^m)}\le 1} H^{q_0}_{\mathcal R_sh}(\mathbb R^n\times \mathbb R^m).
$$
Thus, 
$$
H_X(\mathbb R^n\times \mathbb R^m)\hookrightarrow \bigcap_{h\in (X^{{1/q_0}})'(\mathbb R^n\times \mathbb R^m),\, \|h\|_{(X^{1/{q_0}})'(\mathbb R^n\times \mathbb R^m)}\le 1} H^{q_0}_{\mathcal R_sh}(\mathbb R^n\times \mathbb R^m).
$$
For any $f\in H_X(\mathbb R^n\times \mathbb R^m)$ and 
$$w_0\in \left\{\mathcal R_sh:h\in (X^{{\frac{1}{q_0}}})'(\mathbb R^n\times \mathbb R^m),\;\|h\|_{(X^{1/{q_0}})'}\le 1\right\},$$
from the fact the boundedness of $T$, we conclude that
\begin{align*}
\int_{\mathbb{R}^n\times \mathbb R^m} Tf(x,y)^{q_0}w_0(x,y)dxdy\leq C_0
\int_{\mathbb{R}^n\times \mathbb R^m} \mathcal G_df(x,y)^{q_0}w_0(x,y)dxdy,\quad (Tf, f)\in \mathcal{F}.
\end{align*} 
By applying Theorem \ref{Rsh} with
$$\mathcal F=\left\{(|Tf|,\;\mathcal G_df): f\in H_X(\mathbb{R}^n\times \mathbb R^m)\right\},$$
we have
$$
\|Tf\|_{X(\mathbb{R}^n\times \mathbb R^m)}\le C\|\mathcal G_df\|_{X(\mathbb{R}^n\times \mathbb R^m)}=C\|f\|_{H_X(\mathbb{R}^n\times \mathbb R^m)}.
$$
Therefore, we have completed the proof of Theorem \ref{general}.\end{proof}

We now present the boundedness of  the bi-parameter singular integral operator $\widetilde T$ on product Hardy space ${H}_X(\mathbb R^n\times\mathbb R^m)$.
\begin{theorem}\label{SIO}
Let $q_1\in (0,\infty)$, $X(\mathbb R^n\times \mathbb R^m)$ be a ball quasi-Banach function space satisfying Assumption \ref{as:01} with some
$0<\theta<s\le1$ and $X'(\mathbb R^n\times \mathbb R^m)$ be the associate space.
If $X^{\frac{1}{q_1}}(\mathbb R^n\times \mathbb R^m)$ is ball quasi-Banach function space, and the strong Hardy--Littlewood maximal operator $\mathcal M_s$ is bounded on $(X^{\frac{1}{q_1}})'(\mathbb R^n\times \mathbb R^m)$.
Then the bi-parameter singular integral operator $\widetilde T$ is bounded from product Hardy space ${H}_X(\mathbb R^n\times\mathbb R^m)$ to itself
and bounded from product Hardy space ${H}_X(\mathbb R^n\times\mathbb R^m)$ to $X(\mathbb R^n\times\mathbb R^m)$.
\end{theorem}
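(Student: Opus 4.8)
The theorem asserts two bounds for $\widetilde T$: the self-map $H_X(\mathbb R^n\times\mathbb R^m)\to H_X(\mathbb R^n\times\mathbb R^m)$ and the map $H_X(\mathbb R^n\times\mathbb R^m)\to X(\mathbb R^n\times\mathbb R^m)$. My plan is to obtain both as instances of the refined extrapolation transfer of Theorem \ref{general}, feeding in the weighted endpoint estimates of Theorem \ref{weighted}. The only preliminary input needed is the weighted Littlewood--Paley characterization $\|g\|_{H_w^{q_0}(\mathbb R^n\times\mathbb R^m)}\sim\|\mathcal G(g)\|_{L_w^{q_0}(\mathbb R^{n+m})}$ for $w\in A_{\infty,R}$ and $0<q_0<\infty$, which is how $H_w^{q_0}$ is defined in \cite{DHLW} and which converts every Hardy-space bound into an $L_w^{q_0}$-bound for the associated square function. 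Because Theorem \ref{general} builds the relevant $A_{1,R}$ weights internally through the Rubio de Francia algorithm $\mathcal R_s$ together with the embedding $H_X\hookrightarrow\bigcap_h H^{q_0}_{\mathcal R_s h}$, neither a density argument nor absolute continuity of the quasi-norm is required; this is precisely why no such hypothesis appears in the statement.

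For the self-boundedness, I would apply Theorem \ref{general} not to $\widetilde T$ itself but to the (nonlinear) operator $S:=\mathcal G\circ\widetilde T$, that is, $Sf:=\mathcal G(\widetilde Tf)$; this is legitimate since Theorem \ref{general} imposes no linearity on the operator. Taking $q_0:=q_1$, the hypotheses on $X^{1/q_1}$ and on the boundedness of $\mathcal M_s$ on $(X^{1/q_1})'$ are exactly those assumed. The weighted input reads $\|Sf\|_{L_w^{q_0}}=\|\mathcal G(\widetilde Tf)\|_{L_w^{q_0}}\sim\|\widetilde Tf\|_{H_w^{q_0}}\le C\|f\|_{H_w^{q_0}}$ for every $w=\mathcal R_s h$, where the last inequality is the $H_w^{q_0}\to H_w^{q_0}$ bound of Theorem \ref{weighted}, valid for all $0<q_0<\infty$ and all $w\in A_{\infty,R}\supset A_{1,R}$. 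Theorem \ref{general} then gives $\|Sf\|_X\le C\|f\|_{H_X}$, which is exactly $\|\widetilde Tf\|_{H_X}=\|\mathcal G(\widetilde Tf)\|_X\le C\|f\|_{H_X}$.

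For the $H_X\to X$ bound I would apply Theorem \ref{general} directly to $T:=\widetilde T$, but now the weighted input must come from the $H_w^{q_0}\to L_w^{q_0}$ part of Theorem \ref{weighted}, which is available only for $0<q_0\le1$. I therefore fix an exponent $q_0\in(0,\min\{1,q_1\}]$; since $X^{1/q_0}=(X^{1/q_1})^{q_1/q_0}$ is a further convexification with $q_1/q_0\ge1$, it remains a ball Banach function space, and one checks, using the standard mapping properties of $\mathcal M_s$ under convexification of the associate space, that $\mathcal M_s$ stays bounded on $(X^{1/q_0})'$. For each $w=\mathcal R_s h\in A_{1,R}\subset A_{\infty,R}$ the estimate $\|\widetilde Tf\|_{L_w^{q_0}}\le C\|f\|_{H_w^{q_0}}$ holds by Theorem \ref{weighted}, so Theorem \ref{general} applies and yields $\|\widetilde Tf\|_X\le C\|f\|_{H_X}$.

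The genuinely substantive content has already been isolated into Theorems \ref{weighted} and \ref{general}, so the main remaining work is bookkeeping: correctly packaging $\widetilde T$ (respectively $\mathcal G\circ\widetilde T$) as the operator to which the extrapolation transfer is applied, and verifying that the exponent $q_0$ can be chosen $\le1$ for the $H_X\to X$ direction while preserving the ball-Banach and maximal-operator hypotheses under convexification. I expect this last point---ensuring that lowering $q_1$ to some $q_0\le1$ keeps $\mathcal M_s$ bounded on $(X^{1/q_0})'$---to be the most delicate step. It rests on the general theory of ball quasi-Banach function spaces rather than on anything specific to $\widetilde T$, and in each concrete example (weighted Lebesgue, Herz, Morrey, Musielak--Orlicz) it can be verified directly from the computations recorded in the Remark following Assumption \ref{as:02}.
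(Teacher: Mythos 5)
Your treatment of the $H_X\rightarrow H_X$ bound is correct and is essentially the paper's own argument: the paper applies Theorem \ref{general} with $q_0=q_1$ to the operator $Uf:=\mathcal G_d(\widetilde Tf)$, feeding in $\|\mathcal G_d(\widetilde Tf)\|_{L_w^{q_1}}\sim\|\widetilde Tf\|_{H_w^{q_1}}\le C\|f\|_{H_w^{q_1}}$ from Theorem \ref{weighted}; your $S=\mathcal G\circ\widetilde T$ is the same operator up to the already established equivalence of the continuous and discrete square functions.

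The $H_X\rightarrow X$ half, however, has a genuine gap. Because you insist on using only the $H_w^{q_0}\rightarrow L_w^{q_0}$ part of Theorem \ref{weighted}, which is stated for $q_0\le 1$, you must lower the exponent from $q_1$ to some $q_0\le\min\{1,q_1\}$, and this forces the claim that $\mathcal M_s$ remains bounded on $(X^{1/q_0})'$, where $X^{1/q_0}=(X^{1/q_1})^{q_1/q_0}$. The part of your claim about $X^{1/q_0}$ being a ball Banach function space is fine (lattice Minkowski inequality), but there is no ``standard mapping property'' giving the maximal-operator statement: the standard convexification fact goes the other way (if $\mathcal M_s$ is bounded on a space $Y$, then it is bounded on $Y^r$ for $r\ge1$, via $(\mathcal M_sf)^r\le\mathcal M_s(|f|^r)$), and nothing of the sort holds for \emph{associate spaces of} convexifications. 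Indeed, by Lozanovskii duality $(X^{1/q_0})'$ is the Calder\'on product of $L^1$ and $(X^{1/q_1})'$ with weights $1-q_0/q_1$ and $q_0/q_1$, so it mixes in the space $L^1$, on which $\mathcal M_s$ is unbounded; the pointwise H\"older estimate for $\mathcal M_s$ applied to such products is useless because of the $L^1$ factor, and the Fefferman--Stein dual inequality that would transfer boundedness is only of weak type at this endpoint. In the concrete examples the claim can be checked by hand (the associate spaces are explicit and the relevant conditions are monotone in the exponent), but Theorem \ref{SIO} is stated for general $X$, so per-example verification does not prove it. The gap is avoidable, and the paper's proof shows how: the unnamed Corollary following Theorem \ref{weighted}, obtained from the $A_{\infty,R}$ extrapolation Theorem \ref{s2th1}, upgrades the weighted input to ``$\widetilde T$ is bounded from $H_w^{p}(\mathbb R^n\times\mathbb R^m)$ to $L_w^{p}(\mathbb R^{n+m})$ for \emph{all} $0<p<\infty$ and $w\in A_{\infty,R}$.'' With that input you can apply Theorem \ref{general} at $q_0=q_1$ directly, exactly as in your first half, and no descent of exponents (hence none of the delicate convexification bookkeeping) is ever needed.
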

\begin{proof}
From Theorem \ref{weighted}, we know that for any 
$w\in A_{\infty, R}$, $\widetilde T$ is bounded from weighted product Hardy space ${H}_w^p(\mathbb R^n\times\mathbb R^m)$ to itself for $0<p<\infty$.
Notice that
$$\left\{\mathcal R_sh:h\in (X^{{\frac{1}{q_1}}})'(\mathbb R^n\times \mathbb R^m),\;\|h\|_{(X^{1/{q_1}})'}\le 1\right\}\subset A_{1,R}\subset A_{\infty, R}.$$
By applying Theorem \ref{general},
we get that $\widetilde T$ is bounded from product Hardy space ${H}_X(\mathbb R^n\times\mathbb R^m)$ to $X(\mathbb R^n\times\mathbb R^m)$.
On the other hand, denote that $Uf:=\mathcal G_d \widetilde Tf$,
Theorem \ref{weighted} guarantees that for any $w\in A_{\infty, R}$,
$U$ is bounded from weighted product Hardy space 
${H}_w^{q_1}(\mathbb R^n\times\mathbb R^m)$ to $L_w^{q_1}(\mathbb R^{n+m})$ for any $0<q_1\le 1$.
Therefore, Theorem \ref{general} assert that
$U: H_X(\mathbb R^n\times\mathbb R^m)\rightarrow X(\mathbb R^n\times\mathbb R^m)$ is bounded.
Hence, 
$$\|\widetilde Tf\|_{H_X(\mathbb R^n\times\mathbb R^m)}=\|Uf\|_{X(\mathbb R^n\times\mathbb R^m)}\le C\|f\|_{H_X(\mathbb R^n\times\mathbb R^m)}.$$
This ends the proof of Theorem \ref{SIO}.
\end{proof}

\begin{remark}
We remark that the boundedness of the bi-parameter singular integral operator $\widetilde T$ on the classical product Hardy space $H^{p}\left(\mathbb{R}^{n}\times\mathbb R^m\right)$
was obtained in \cite{HLZ}
and that the corresponding result for the weighted product Hardy space associated with product Muckenhoupt weights $H_w^{p}\left(\mathbb{R}^{n}\times\mathbb R^m\right)$
was established in \cite{DHLW}.	
 To our best knowledge, our results on the boundedness of the bi-parameter singular integral operator $\widetilde T$ for the product Herz--Hardy space  $H\vec{K}_q^{\alpha, p}\left(\mathbb{R}^n \times \mathbb{R}^m\right)$, the weighted product Hardy--Morrey space $H\mathcal{M}_{u,p}^{w}\left(\mathbb{R}^{n}\times\mathbb R^m\right)$ and the product Musielak--Orlicz--Hardy--type space $H^\varphi(\mathbb R^n\times\mathbb R^m)$
are completely new. 
\end{remark}

\section*{Acknowledgments}This project is supported by the National Natural Science Foundation of China (Grant No. 11901309), Natural Science Foundation of Jiangsu Province of China (Grant No. BK20180734) and Natural Science Foundation of Nanjing University of Posts and Telecommunications (Grant No. NY222168).

%%%%%%%%%%%%%%%%%%%%%%%%%%%%%%%%%%%%%%%%%%%%%%%%%%%%%%%

\bigskip
\noindent Jian Tan\\
\noindent School of Science\\ Nanjing University of Posts and Telecommunications\\
    Nanjing 210023, People's Republic of China

\noindent {\it E-mail address}: \texttt{tanjian89@126.com; tj@njupt.edu.cn}


\begin{thebibliography}{99}

\bibitem{CWYZ}
	\newblock D-C. Chang, S. Wang, D. Yang and Y. Zhang,
	\newblock {Littlewood-Paley characterizations of Hardy-type spaces associated with ball quasi-Banach function spaces},
	\newblock Complex Anal. Oper. Theory, 2020, 14: 1-33.
	
\vspace{-0.3cm}
\bibitem{CF1}S-Y. Chang and R. Fefferman, A continuous version of dulity of $H^1$ and BMO on the bidisk. Ann. Math. 112(1980), 179--201. 

\vspace{-0.3cm}
\bibitem{CF2}
S-Y. Chang and R. Fefferman, Some recent developments in Fourier analysis and Hp theory on product domains. Bull. Am. Math. Soc. 12(1985), 1--43.

\vspace{-0.3cm}
\bibitem{Co} R. R. Coifman, A real variable characterization of $H^p$, Studia Math. 51 (1974), 269--274.


\vspace{-0.3cm}
\bibitem{Cr} D. Cruz-Uribe, Extrapolation and factorization. arXiv:1706.02620

\vspace{-0.3cm}

\bibitem{CMP} D. Cruz-Uribe, J. M. Martell and C. P\'erez,
Weights, extrapolation and the theory of Rubio de Francia. Operator Theory: Advances and Applications, 215. Birkh\"auser/Springer Basel AG, Basel, 2011. xiv+280 pp.

\vspace{-3mm}
\bibitem{DHLW} Y. Ding, Y. Han, G. Lu and X. Wu, Boundedness of singular integrals on multiparameter weighted Hardy spaces $H^p_w(\mathbb R^n\times \mathbb R^m)$, Potential Anal. 37 (2012), no. 1, 31-56. 

\vspace{-3mm}
\bibitem{FHLY} 
X. Fan, J. He, B. Li and D. Yang, Real-variable characterizations of anisotropic product Musielak--Orlicz
Hardy spaces, Sci China Math, 60(2017), no.11, 2093--2154.

\vspace{-0.3cm}
\bibitem{FS71} C. Fefferman and E. M. Stein, Some maximal inequalities. Am. J. Math. 93(1971), 107--115.

\vspace{-0.3cm}
\bibitem{FS} C. Fefferman and E. M. Stein, $H^p$ spaces of several variables, Acta Math. 129 (1972),
137--193.

\vspace{-0.3cm}
\bibitem{FS1} C. Fefferman and E. M. Stein, Singular integrals on product spaces, Adv. Math.
45 (1982),
117--143.

\vspace{-0.3cm}
\bibitem{gr85} J. Garc\'{i}a-Cuerva and J. L. Rubio de Francia, Weighted norm inequalities and related topics,
North-Holland Mathematics Studies, 116. Notas de Matem\'{a}tica [Mathematical Notes], 104. North-Holland
Publishing Co., Amsterdam, 1985.

\vspace{-0.3cm}
\bibitem{GS} 
R. Gundy and E.M. Stein, $H^p$ spaces on the polydisc. Proc. Natl. Acad. Sci. 76(1979), 1026--1029.


\vspace{-0.3cm}
\bibitem{HLPW} Y. Han, J. Li, M. Pereyra and L. Ward, Atomic decomposition of product Hardy spaces via wavelet bases on spaces of homogeneous type. New York J. Math. 27 (2021), 1173--1239.

\vspace{-0.3cm}
\bibitem{HLW16} Y. Han, J. Li and L. Ward, Hardy space theory on spaces of homogeneous type via
orthonormal wavelet bases, Appl. Comput. Harmon. Anal. 45 (2018), 120--169.

\vspace{-0.3cm}
\bibitem{HLZ} Y. Han, G. Lu and K. Zhao, 
Discrete Calder\'on's identity, atomic decomposition and boundedness criterion of operators on multiparameter Hardy spaces. J. Geom. Anal. 20 (2010), no. 3, 670--689.


\vspace{-0.3cm}
\bibitem{HH} 
P. Harjulehto and P. H\"ast\"o, Orlicz spaces and generalized Orlicz spaces. Lecture Notes in Mathematics, 2236. Springer, Cham, 2019. x+167 pp.

\vspace{-0.3cm}
\bibitem{Ho2} K-P. Ho, {Sublinear operators on weighted Hardy spaces with variable exponents}, Forum Math. {31} (2019), no. 3, 607-617.

\vspace{-0.3cm}
\bibitem{Ho22} K-P. Ho, Fractional integral operators on Orlicz slice Hardy spaces. Fract. Calc. Appl. Anal. 25 (2022), no. 3, 1294-1305. 

\vspace{-0.3cm}
\bibitem{Ho17} K-P. Ho, Strong maximal operator and singular integral operators in weighted Morrey spaces on product domains. Math. Nachr. 290(2017), no.16, 2629--2640.


\vspace{-0.3cm}
\bibitem{HuangChangYang}
	\newblock L. Huang, D-C. Chang, and D. Yang,
	\newblock \emph{Fourier transform of Hardy spaces associated with ball quasi-Banach function spaces},
	\newblock Appl Anal, 2022, 101: 3825-3840.

\vspace{-0.3cm}
\bibitem{La} R. H. Latter, A characterization of $H^p(\mathbf{R}^n)$
in terms of atoms, Studia Math. 62
(1978), 93--101.

\vspace{-0.3cm}
\bibitem{SHYY17} Y. Sawano, P.-K. Ho, D. Yang, and S. Yang, Hardy spaces for ball quasi-Banach
function spaces, Dissertationes Math. (Rozprawy Mat.) 525 (2017), 1--102.

\vspace{-0.3cm}
\bibitem{sw60} E. M. Stein and G. Weiss, On the theory of harmonic functions of several variables. I. The
theory of $H^p$-spaces, Acta Math. 103 (1960), 25--62.

\vspace{-0.3cm}
\bibitem{Tan23} J. Tan, Weighted variable Hardy spaces associated with para-accretive functions and boundedness of Calder\'on--Zygmund operators. J. Geom. Anal. 33 (2023), no. 2, Paper No. 61, 32 pp.

\vspace{-0.3cm}
\bibitem{Tan20} J. Tan, Some Hardy and Carleson measure spaces estimates for Bochner--Riesz means. Math. Inequal. Appl. 23 (2020), no. 3, 1027--1039.

\vspace{-0.3cm}
\bibitem{Tan1}J. Tan, Atomic decompositions of localized Hardy spaces with variable exponents and applications, J. Geom. Anal. \textbf{29} (2019), no. 1, 799-827.

\vspace{-0.3cm}
\bibitem{T22} J. Tan,  Real-variable theory of local variable Hardy spaces. Acta. Math. Sin.-English Ser. (2023). https://doi.org/10.1007/s10114-023-1524-0

\vspace{-0.3cm}
\bibitem{TYYZ} J. Tao, D. Yang, W. Yuan and Y. Zhang, Compactness characterizations of commutators on ball Banach function spaces. Potential Anal. 58 (2023), no. 4, 645--679.

\vspace{-0.3cm}
\bibitem{WangYangYang}
	\newblock F. Wang, D. Yang and S. Yang,
	\newblock {Applications of Hardy spaces associated with ball quasi-Banach function spaces},
	\newblock Results Math, 2020, 75: 26, 1-58.

\vspace{-0.3cm}
\bibitem{W}
M. Wei, Extrapolation to product Herz spaces and some applications, Forum Math. 33 (2021), no. 5, 1097--1123.

\vspace{-0.3cm}
\bibitem{W1}
M. Wei, Extrapolation for weighted product Morrey spaces and some applications, Potential Analysis (2023)
https://doi.org/10.1007/s11118-022-10056-3

\vspace{-0.3cm}
\bibitem{Wu}
X. Wu, Atomic decomposition characterizations of weighted multiparameter Hardy spaces. Front. Math. China 7 (2012), no. 6, 1195--1212. 

\vspace{-0.3cm}
\bibitem{YHYY1} X. Yan, Z. He, D. Yang and W. Yuan, Hardy spaces associated with ball quasi--Banach function spaces on spaces
of homogeneous type: characterizations of maximal functions, decompositions, and dual spaces. Math.
Nachr., DOI: 10.1002/mana.202100432 (2022)

\vspace{-0.3cm}
\bibitem{YHYY2} X. Yan, Z. He, D. Yang and W. Yuan, Hardy spaces associated with ball quasi--Banach function spaces on spaces
of homogeneous type: Littlewood--Paley Characterizations with Applications to Boundedness of Calder\'on-Zygmund Operators. Acta Math. Sin. (Engl. Ser.) 38 (2022), no. 7, 1133--1184.

\vspace{-0.3cm}
\bibitem{YLK} D. Yang, 
Y. Liang and L D. Ky, Real-variable theory of Musielak--Orlicz Hardy spaces. Lecture Notes in Mathematics, 2182. Springer, Cham, 2017. xiii+466 pp.

\vspace{-0.3cm}

\end{thebibliography}
\end{document}